\documentclass[11pt]{article}

\usepackage[a4paper, left=3.5cm,top=3.0cm,right=3.5cm,bottom=3.5cm]{geometry}

\setlength{\parskip}{0.5em} \setlength{\parindent}{0em}

\usepackage{concmath}
\usepackage[T1]{fontenc}
\usepackage{siunitx}

\usepackage{amsmath,amsthm,amssymb,cite}
\usepackage{enumitem}
\setitemize{label=\scriptsize{$\blacksquare$}, topsep=0em}
\setenumerate{label=(\alph*), topsep=0em}

\def\plus{{\boldsymbol{\texttt{+}}}}

\usepackage{graphicx}
\usepackage{soul}
\usepackage{authblk}
\setcounter{Maxaffil}{4}

\usepackage{framed}

\newcommand{\trans}{\mathsf{T}}
\newcommand{\edot}{\,\cdot \,}
\newcommand{\eps}{\epsilon}

\newtheorem{theorem}{Theorem}
\newtheorem{remark}[theorem]{Remark}
\newtheorem{proposition}[theorem]{Proposition}
\newtheorem{definition}[theorem]{Definition}
\newtheorem{lemma}[theorem]{Lemma}

\theoremstyle{definition}
\newtheorem{assumption}[theorem]{Assumption}

\newcommand{\R}{\mathbb R}

\newcommand{\N}{\mathbb N}
\newcommand{\YY}{\mathcal Y}
\newcommand{\XX}{\mathcal X}

\newcommand{\sph}{\mathbb S}
\newcommand{\coloneqq}{:=}

\newcommand{\PP}{\mathcal P}
\newcommand{\QQ}{\mathcal Q}

\newcommand{\Vo}{V}

\newcommand{\Ko}{K}

\newcommand{\Xo}{R}

\newcommand{\resfun}{\Phi}
\newcommand{\aop}{\mathcal A}

\newcommand{\kop}{\mathcal K}
\newcommand{\hop}{\mathcal H}
\newcommand{\vop}{\mathcal V}
\newcommand{\xop}{\mathcal R}
\newcommand{\eop}{\mathcal E}
\newcommand{\uop}{\mathcal U}

\newcommand{\X}{X}
\newcommand{\Y}{Y}
\newcommand{\data}{ y }
\newcommand{\ndata}{ y^\delta}
\newcommand{\res}{r}

\newcommand{\rmd}{\mathrm d}

\DeclareMathOperator{\nr}{ker}
\DeclareMathOperator{\ran}{ran}

\newcommand{\nd}{D}
\newcommand{\id}{d}

\newcommand{\nb}{B}
\newcommand{\ib}{b}
\newcommand{\ibs}{b}

\DeclareMathOperator{\argmin}{arg\, min}

\newcommand\abs[1]{\left\vert#1\right\vert}

\newcommand\norm[1]{\left\Vert#1\right\Vert}
\newcommand\snorm[1]{\Vert#1\Vert}

\newcommand\rest[2]{{#1}|{#2}}

\newcommand{\kl}[1]{\left(#1\right)}
\newcommand{\set}[1]{\left\{#1\right\}}

\newcommand{\skl}[1]{(#1)}
\newcommand{\inner}[2]{\left\langle#1,#2\right\rangle}

\setlength{\parskip}{0.7em}
\setlength{\parindent}{0em}

\usepackage{soul}
\usepackage{xcolor}
\colorlet{lred}{red!40}
\colorlet{lgreen}{green!40}
\colorlet{lblue}{blue!40}

\newcommand{\Id}{\operatorname{Id}}                         

\newcommand{\rot}{}

\numberwithin{equation}{section}
\numberwithin{figure}{section}
\numberwithin{theorem}{section}

\allowdisplaybreaks

\title{Analysis of the Block Coordinate Descent Method for {\rot Linear} Ill-Posed Problems}

\author{Simon Rabanser}

\affil{Department of Mathematics, University of Innsbruck\authorcr
Technikerstrasse 13, 6020 Innsbruck, Austria\authorcr
E-mail: {simon.rabanser@uibk.ac.at}}

\author{Lukas Neumann}

\affil{Institute of Basic Sciences in Engineering Science, University of Innsbruck\authorcr
Technikerstrasse 13, 6020 Innsbruck, Austria\authorcr
E-mail: {lukas.neumann@uibk.ac.at}}

\author{Markus Haltmeier}

\affil{Department of Mathematics, University of Innsbruck\authorcr
Technikerstrasse 13, 6020 Innsbruck, Austria\authorcr
E-mail: {markus.haltmeier@uibk.ac.at}}

\date{}

\begin{document}

\maketitle
\begin{abstract}
Block coordinate  descent (BCD) methods   approach  optimization problems by  performing gradient steps along alternating  subgroups of coordinates.
This is in contrast to full gradient descent,  where  a gradient step
updates all coordinates simultaneously.  BCD has  been demonstrated to accelerate the  gradient
method in many practical large-scale applications. Despite its success
no convergence analysis  for inverse problems is known so far.
In  this paper, we investigate the BCD method for solving linear  inverse problems. As main theoretical result, we show that for operators having a particular tensor product form,
the BCD method combined with an appropriate stopping criterion yields a convergent regularization method.
To illustrate the theory, we  perform numerical experiments comparing the BCD and the full gradient descent method   for a system  of integral equations.
We also present numerical tests for a non-linear inverse problem not covered by our theory, namely one-step inversion in multi-spectral X-ray tomography.

\medskip \noindent \textbf{Keywords:} ill-posed problems, convergence analysis, regularization theory, coordinate descent, multi-spectral CT

\medskip \noindent \textbf{MSC2010:} 65J20, 44A12, 47J06.

\end{abstract}

\section{Introduction}
\label{sec:intro}

We consider the solution of inverse problems
of the form
\begin{equation}\label{IP}
    y^\delta  =  \aop(x[1], \dots, x[\nb] ) + z
\end{equation}
by block coordinate gradient descent (BCD) methods.
Here $\aop \colon \XX  \to \YY$  is a  {\rot linear} forward operator
between Hilbert spaces $\XX = \X_1 \times \cdots \times \X_\nb$ and  $\YY$.
Moreover, $x = (x[1], \dots, x[\nb])\in\XX$ is the  vector of
blocks $x[\ib] \in  \X_{\ib}$ of unknown variables, $\ndata \in\YY$
are the given noisy data, and  $z$ denotes the data perturbation that satisfies
$\norm{z} \leq \delta$  for some noise level
$\delta\geq 0$.

For many inverse problems, the individual
blocks $x[\ib]$  arise  in a natural manner and
might correspond to $x[\ib] = f[\ib]$, where
$f[\ib] \colon \Omega_\ib \to \R$ are functions modeling unknown
spatially varying parameter distributions.
The blocks might also be  formed by applying domain decomposition
$ \Omega  =\Omega_1 \cup \Omega_2  \cup \ldots \cup\Omega_\nb$ to  a single
function $f  \colon \Omega  \to \R$, and defining $x[\ib] = \rest{f}{\Omega_\ib}$ as the
restriction of $f$ to $\Omega_\ib$.

\subsection{Iterative regularization methods}

The characteristic  feature of inverse problems is their
ill-posedness which means that the solution of \eqref{IP}
is  unstable with respect to data perturbations.
In such a situation, one has to apply
regularization methods   to obtain solutions  in a
stable way.  There are at least two basic classes of  regularization
methods:  iterative  regularization and variational regularization \cite{Engl96,scherzer2009variational}.
In this paper we consider iterative  regularization
and introduce and analyze BCD as new member of this class
of regularization methods.

The most established  iterative regularization approaches for inverse problems
are the Landweber iteration and its variants
\cite{kaltenbacher08,landweber1951iteration,hanke1995convergence,Neubauer17}
\begin{equation} \label{eq:iterL}
	x_{k+1}^\delta
	\coloneqq  x_k^\delta - s^\delta_k \aop^*
	\kl{ \aop (x_k^\delta) - \data^\delta } \,,
\end{equation}
where $x_0^\delta \coloneqq  x_0 \in \XX$  is an initial guess,
$s^\delta_k$ is the step size and $\aop^*$ denotes the adjoint of $\aop$.
If the step size is taken constant, then \eqref{eq:iterL}
is the Landweber iteration \cite{landweber1951iteration,hanke1995convergence}.
Other step size rules yield
the steepest descent and  the minimal error method
\cite{neubauer1995convergence} or  a more recent variant
analyzed in \cite{Neubauer17}.  Kaczmarz type  variants of \eqref{eq:iterL}
for systems  of ill-posed equations have been analyzed in
\cite{decesaro2008steepest,haltmeier2007kaczmarz1,Hal09b,kowar2002,leitao16projective,li2018averaged}.
Kaczmarz methods  make use of a product structure of the image space $\YY$,
and are in this sense dual to BCD methods which exploit the product structure  of the pre-image space $\XX$.

We consider the  product form
$\XX = \X_1 \times \cdots \times \X_\nb$, {\rot
where the forward operator can be written as $\aop = [\aop_1 , \dots, \aop_B]$. }
As a consequence, the Landweber iteration takes the form
\begin{equation} \label{eq:lw}
	\begin{pmatrix}
	x_{k+1}^\delta[1] \\
	x_{k+1}^\delta[2] \\
	\vdots \\
	x_{k+1}^\delta[\nb]
	\end{pmatrix}
	=
	\begin{pmatrix}
	x_k^\delta[1] \\
	x_k^\delta[2] \\
	\vdots \\
	x_k^\delta[\nb]
	\end{pmatrix}
	- s^\delta_k
	\begin{pmatrix}
	\aop_1^*   \\
	\aop_2^*   \\
	\vdots \\
	\aop_B^*
	\end{pmatrix}
	\kl{ \aop (x_k^\delta) - \data^\delta}
	\,.
\end{equation}
We see that each iterative update requires computing
$\nb$ separate updates, one for each of the blocks.

\subsection{Block coordinate descent (BCD)}

In order to {\rot simplify}  the iterative update in \eqref{eq:lw},
a natural idea is to update only a single
block in each iteration.
This results in the BCD iteration
 \begin{equation} \label{eq:cd}
	x_{k+1}^\delta[\ib]
	\coloneqq
	x_k^\delta[\ib]
	-
	s^\delta_k
	\begin{cases}
	 \aop_\ib^*
	\kl{ \aop (x_k^\delta) - \data^\delta}
	& \text{ if  } \ib = \ibs(k)
	\\
	0
	& \text{ otherwise } \,,
	\end{cases}
	\end{equation}
where the control $\ibs(k) \in \set{1, \dots, \nb}$ selects the
block that is updated in the $k$th  iteration.
 If we apply the BCD iteration to {\rot exact data where $\delta =0$,}  we write $x_k$ instead of $x_k^\delta$.
Rigorously studying the iteration \eqref{eq:cd} in the context of ill-posed problems
is the main aim of this paper. To guarantee convergence in the noisy
case we will  slightly modify the update rule of the BCD iteration by including a loping
strategy which  skips the $k$th iterative  step if a certain residual term is
 sufficiently  small (see Definition \ref{def:loping}).
{\rot Under the reasonable assumption that the complexity of evaluating  $\aop$ is essentially $B$-times the complexity $M$ of  evaluating  $\aop^*_\ibs$, then
one step of the Landweber Method has complexity    $\mathcal{O} ( 2BM) $,
 whereas one step of the BCD method has  complexity  $\mathcal{O} ( (B+1)M)$.
 For  the special form of $\aop$ considered in the following section, the complexity of one step of the  BCD method even reduces to   $\mathcal{O} ( 2 M )$; see Remark
\ref{rem:complexity}.
}

Note that the iteration \eqref{eq:cd} arises by applying the
 block gradient {\rot descent}  method, well known in optimization \cite{beck2013convergence,nesterov2012efficiency,saha2013nonasymptotic,wright2015coordinate},
 to the residual functional  $
\frac{1}{2}  \| y^\delta  -  \aop(x) \|^2$.
In a finite dimensional  setting, BCD and other coordinate descent  type
methods are well studied. However, existing convergence
results mostly analyze convergence in the objective value.
This only implies  convergence  in pre-image space, if the residual
functional is  strongly  convex.  Strong convexity does not hold for ill-posed problems.
Therefore, existing convergence results and methods cannot be applied to
ill-posed inverse  problems. {\rot Note that removing the strict convexity assumption
can also also be achieved by coupling the BCD method with a proximal term; see   \cite{bolte2014proximal} and the references therein.}

To the best of our knowledge, no convergence result for
\eqref{eq:cd} in the  ill-posed setting is available.
As the main   contribution in this   paper we will present a convergence
analysis of BCD applicable to the ill-posed case. We show that under
assumptions specified in Section \ref{sec:prelim},
for operators having a particular tensor product form, the BCD iteration
yields a regularization method for solving ill-posed linear problems.

\subsection{Outline}

This paper is organized as follows.
In Section \ref{sec:prelim} we present the
main assumptions made in this paper,
derive an auxiliary  results and
introduce the  loping strategy.
In Section~\ref{sec:analysis} we
present the convergence analysis.
In the  exact data case, we
show that the BCD iteration converges  to a solution
$x^*$ of the given equation as $k \to \infty$.
In  the  noisy data case we show that the stopping index
of the loping BCD iteration is finite and the corresponding
iterates converge to $x^*$ as $\delta \to 0$.
To illustrate the theory, in Section~\ref{sec:int} {\rot we compare
  the BCD method with the gradient method for a system  of integral
  equations.    Additionally, in Section~\ref{sec:xray} we consider
a non-linear  example} not covered by our theory,  namely one-step inversion in  multi-spectral  X-ray
tomography \cite{rigie2015joint,kazantsev2018joint,atak2015dual,barber2016algorithm}.
The paper concludes with a short  discussion  presented
in Section \ref{sec:conclusion}.

\section{Preliminaries}
\label{sec:prelim}

In this section we formulate the main assumptions and
derive basic results that we will use in the convergence
analysis presented in Section~\ref{sec:analysis}.

Note that for any Hilbert space
$\X$ we can write $\X^\nb  \simeq \R^\nb \otimes \X$.
For any $\ib \in \set{1, \dots, \nb}$ we define the projection operators
\begin{equation} \label{eq:pr}
    \PP_\ib = ( e_\ib e_\ib^\trans)  \otimes \Id_\X  \colon \X^\nb   \to \X^\nb
    \colon
    	\begin{pmatrix}
	x[1] \\
	\vdots \\
	x[\ib] \\
	\vdots \\
	x[\nb]
	\end{pmatrix}
	\mapsto
	\begin{pmatrix}
	0 \\
	\vdots \\
	x[\ib] \\
	\vdots \\
	0
	\end{pmatrix}\,,
\end{equation}
where $e_\ib$ denotes the $\ib$th standard basis
vector in $\R^{\nb}$, defined by $e_\ib[\ib] =1$ and $e_\ib[\ib'] =0$ for $\ib' \neq \ib$.
Using \eqref{eq:pr}, the BCD method  \eqref{eq:cd} can be written in the
compact form
\begin{equation} \label{eq:iter}
    x_{k+1}^\delta
    \coloneqq  x_k^\delta - s^\delta_k \PP_{\ibs(k)} \aop^*(\aop (x_k^\delta)-y^\delta)\,.
\end{equation}
Here  $\ibs(k) \in \{1,\dots,\nb\}$ is the selected block at the $k$th
iteration, $s^\delta_k >0 $ is the step size, and  $x_0^\delta:=x_0 \in \XX$   is  some initial guess. Recall that  in the case of exact data
we write $x_k$ instead of $x_k^\delta$.

\subsection{Main assumptions}

{\rot
We note that the main difficulty we encountered in the
convergence analysis of  the BCD  method for ill-posed problems
is that even  for exact data $\data = \aop (x^*)$,
the error   $\norm{x_k - x^*}$ is  not monotonically decreasing,
except for some very special cases.
This can be easily verified  for linear operators in $\R^\nb$. On the other hand,
the BCD is monotonically decreasing in the objective value, which is used in existing
convergence theory for optimization problems \cite{beck2013convergence,nesterov2012efficiency,saha2013nonasymptotic,wright2015coordinate}.
However, this cannot be used directly for the  convergence
analysis in the ill-posed setting where the value of the residual functional
gives no bounds for the error $\norm{x_k - x^*}$.
}

{\rot We present a complete convergence analysis  under the following assumption that allows to separate the difficulties
due to the ill-posedness and due to the non-monotonicity. }

\begin{assumption}[Main\label{ass} conditions for the convergence analysis] \mbox{}
\begin{enumerate}[label=(A\arabic*)]
\item \label{ass1}
$\XX$, $\YY$
are  Hilbert spaces of the form
$\XX  = \X^\nb$,
{\rot $\YY  = \Y^\nd$
with $\nd, \nb\in \N$.}

\item \label{ass2} $\aop \colon  \XX \to \YY$
has  the form  $\aop = \Vo \otimes \Ko$, where
\begin{itemize}
\item {\rot $\Ko \colon  \X \to \Y$ is bounded linear};

\item
{\rot $\Vo  \in \R^{\nd \times \nb}$ has rank $\nb$ and non-vanishing  columns
$v_\ib \in \R^\nd$;}

 \end{itemize}

\item  \label{ass4}
The control $b \colon \N \to \set{1, \dots, B}$ satisfies\\
$\exists p \in \N \; \forall k \in \N \colon
\set{\ibs(k),  \dots, \ibs(k  + p-1) } = \set{1, \dots, \nb}$.
\end{enumerate}
\end{assumption}

Let us introduce the operators {\rot
\begin{align*}
&\kop_{\nb}	  \coloneqq  \Id_{\R^\nb} \otimes \, \Ko \colon \XX \to \YY
\colon\begin{pmatrix}
x[1] \\ \vdots \\ x[\nb]
\end{pmatrix}
\mapsto
\begin{pmatrix}
\Ko(x[1]) \\ \vdots \\ \Ko(x[\nb])
\end{pmatrix}
\\
& \vop_\Y  \coloneqq  \Vo \otimes \Id_{\Y} \colon \YY \to \YY \colon
y
\mapsto
\sum_{\ib=1}^\nb v_\ib \, y[\ib] \,.%
\end{align*}
In a similar manner we denote $\kop_{\nd}  \coloneqq  \Id_{\R^\nd} \otimes \, \Ko$ and
$ \vop_\X \coloneqq  \Vo \otimes \Id_{\X}$.
Then we have
$\aop = \vop_\Y \circ \kop_{\nb} = \kop_{\nd}  \circ \vop_\X   $.}

To overcome the above mentioned obstacles in the convergence analysis
we will study the auxiliary sequence  $(\vop_\X x_k^\delta )_{k\in \N}$ which, by linearity,
satisfies
\begin{multline} \label{eq:iter1}
    \vop_\X x_{k+1}^\delta
     =
     \vop_\X x_k^\delta - s^\delta_k  \vop_\X \PP_{\ibs(k)} \aop^*(\aop (x_k^\delta)-\ndata)
     \\
     =\vop_\X x_k^\delta - s^\delta_k \snorm{v_{\ibs(k)}}^2  \QQ^{\X}_{\ibs(k)} \kop_{\nd}^*(\aop (x_k^\delta)-\ndata)
     \,.
\end{multline}
Here we have set {\rot
\begin{equation}
    \QQ^{\X}_\ib \coloneqq  \frac{1}{\norm{v_\ib}^2}(v_\ib v_\ib^\trans)
    \otimes \Id_\X \colon  \XX \to  \XX \,.
\end{equation}
We will also use the notation $\QQ^{\Y}_\ib \coloneqq  \snorm{v_\ib}^{-2} (v_\ib v_\ib^\trans)
    \otimes \Id_\Y$.} As an important auxiliary result   we
will show monotonicity for $(\vop_\X x_k^\delta )_{k\in \N}$.
This allows us to show that the BCD  method combined with a loping
strategy is a convergent regularization  method.
In fact, this is the reason for requiring the forward operator $\aop$
to have the   particular tensor product form specified in assumption \ref{ass2}. The convergence analysis in the more general setting is still an open and challenging problem.

{\rot Note that the assumption $\operatorname{rank} (\Vo) = \nb$
is only necessary for the convergence of $(\vop_\X x_k)_{k\in \N}$ implying
convergence of $(x_k )_{k\in \N}$. In the case that
$\Vo$  has arbitrary rank, the main convergence results   still hold true
for the semi-norm $\norm{\vop_\X (\edot )}$ in place of the norm $\norm{\edot }$. }

{ \rot
\begin{remark}[Numerical  complexity] \label{rem:complexity}
For  the considered form $\aop =  \Vo \otimes \Ko$ and a cyclic control $\ibs(k) =  ((k-1) \operatorname{mod} B)+1$, one cycle of updates  with the BCD method for $k \in \set{\ell B, \dots , (\ell+1) B -1}$   has essentially the same numerical complexity as one iteration with the Landweber iteration. To see this, we implement the BCD method in the following manner:
\begin{enumerate}[label=(S\arabic*)]
\item Initialization:  $\forall b = 1, \dots,  B$   do
\begin{itemize}
\item $x_{\rm BCD}[\ib] \gets x_0[\ib]$
\item $h_{\rm BCD}[\ib] \gets \Ko(x_{\rm BCD}[\ib])$.
\end{itemize}
\item Updates: $\forall  i_0 = 1, \dots, N_{\rm cycle} \forall  b = 1, \dots,  B$   do
\begin{itemize}
\item $x_{\rm BCD}[b] \gets  x_{\rm BCD}[b]  - s_k \Ko^*(  (\vop_\Y^* ( \vop_\Y  h_{\rm BCD} - y^\delta ))[b] )$
\item $h_{\rm BCD}[b] \gets  \Ko( x_{\rm BCD}[b] )$.
\end{itemize}
\end{enumerate}
Complexity of the  above procedure is dominate by  the evaluation of
$\Ko$, $\Ko^*$ and the evaluation of $\vop_\Y$, $\vop_\Y^*$.
Unless $B$ is very large (or evaluating $\Ko$, $\Ko^*$  is cheap),
for typical inverse problems, the dominating parts are  $\Ko$, $\Ko^*$. This shows that the complexity
  of one cycle of the BCD iteration in fact is similar to the complexity of  one iteration of the Landweber iteration.
\end{remark}
}

\subsection{Monotonicity}

The following lemma is an important auxiliary result, which will be used at several places throughout this article.

\begin{lemma}[Monotonicity]\label{lem:mon}
Let $x^* \in \XX$ satisfy  $\aop(x^*) = y$ and set
\begin{equation} \label{eq:res}
    \res^\delta_k
    \coloneqq
    \norm{\QQ^{\Y}_{\ibs(k)} \kl{ \ndata-\aop (x_k^\delta) } }
    \,.
\end{equation}
Then, the following estimate holds:
 \begin{multline}\label{eq:mon}
    \frac{1}{2}\norm{\vop_\X x_{k+1}^\delta - \vop_\X x^*}^2 - \frac{1}{2}\norm{\vop_\X x_k^\delta - \vop_\X x^*}^2
    \leq
         - s^\delta_k  \res^\delta_k \snorm{v_{\ibs(k)}}^2 \kl{ \res^\delta_k - \delta_{\ib(k)}   }
          \\
         +\frac{(s^\delta_k)^2}{2} \, \norm{\vop_\X \PP_{\ibs(k)}\aop^*\kl{ \ndata - \aop (x_k^\delta)}}^2 \,.
\end{multline}
In particular, if $\snorm{\QQ^{\Y}_{\ib} (y-y^\delta)} \leq \delta_\ib$ and
$\res^\delta_k \geq   \delta_{\ibs(k)} $ and if the step size is chosen  such that
\begin{equation}\label{eq:stepsize}
    0 \leq s^\delta_k\leq  \frac{2 \res^\delta_k  \snorm{v_{\ibs(k)}}^2    \left(\res^\delta_k  -\delta_{\ib(k)} \right)}{\norm{\vop_\X \PP_{\ibs(k)}\aop^*(\ndata - \aop (x_k^\delta))}^2}\,,
\end{equation}
then $\snorm{\vop_\X x_{k+1}^\delta - \vop_\X x^*}^2 \leq  \snorm{\vop_\X x_k^\delta - \vop_\X x^*}^2$.
\end{lemma}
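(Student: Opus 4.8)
The plan is to expand the squared norm $\norm{\vop_\X x_{k+1}^\delta - \vop_\X x^*}^2$ using the update formula \eqref{eq:iter1}, isolate the linear cross term and the quadratic term, and then identify the cross term with the residual quantity $\res^\delta_k$ defined in \eqref{eq:res}. Concretely, writing $w_k^\delta := \vop_\X x_k^\delta - \vop_\X x^*$ and using \eqref{eq:iter1} in the form $w_{k+1}^\delta = w_k^\delta - s^\delta_k \snorm{v_{\ibs(k)}}^2 \QQ^\X_{\ibs(k)} \kop_{\nd}^*(\aop(x_k^\delta) - \ndata)$, the identity $\tfrac12\norm{a-b}^2 - \tfrac12\norm{a}^2 = -\inner{a}{b} + \tfrac12\norm{b}^2$ gives
\begin{equation*}
\tfrac12\norm{w_{k+1}^\delta}^2 - \tfrac12\norm{w_k^\delta}^2
= - s^\delta_k \snorm{v_{\ibs(k)}}^2 \inner{w_k^\delta}{\QQ^\X_{\ibs(k)} \kop_{\nd}^*(\aop(x_k^\delta)-\ndata)}
+ \tfrac{(s^\delta_k)^2}{2}\norm{\vop_\X\PP_{\ibs(k)}\aop^*(\ndata - \aop(x_k^\delta))}^2,
\end{equation*}
where in the last term I have used $\vop_\X\PP_{\ibs(k)}\aop^* = \snorm{v_{\ibs(k)}}^2\,\QQ^\X_{\ibs(k)}\kop_{\nd}^*$, which follows from the computation already carried out in \eqref{eq:iter1}. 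So the whole lemma reduces to showing that the inner product equals $\res^\delta_k(\res^\delta_k - \delta_{\ibs(k)})$ up to the sign and the estimate.

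The key step is the treatment of this inner product. Using self-adjointness and idempotency-type properties of the projectors (note $\QQ^\X_\ib$ is an orthogonal projection onto the $v_\ib$-component tensored with $\Id_\X$, up to the normalization, so $\QQ^\X_\ib = (\QQ^\X_\ib)^* $ and it commutes with $\vop_\X$), I would move $\QQ^\X_{\ibs(k)}$ onto $w_k^\delta$. Then I use $\aop = \kop_{\nd}\circ\vop_\X$ together with the fact that $\kop_{\nd}$ intertwines the $\X$- and $\Y$-side projectors, $\kop_{\nd}\,\QQ^\X_\ib = \QQ^\Y_\ib\,\kop_{\nd}$, to rewrite everything in terms of the $\Y$-side quantity $\QQ^\Y_{\ibs(k)}(\aop(x_k^\delta)-\ndata)$. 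Since $\aop(x^*) = y$, one has $\QQ^\Y_{\ibs(k)}(\aop(x_k^\delta) - \aop(x^*)) = \QQ^\Y_{\ibs(k)}(\aop(x_k^\delta) - y)$, and after splitting $\ndata = y + (\ndata - y)$ the inner product becomes
\begin{equation*}
\inner{\QQ^\Y_{\ibs(k)}(\aop(x_k^\delta)-y)}{\QQ^\Y_{\ibs(k)}(\aop(x_k^\delta)-\ndata)}
= \norm{\QQ^\Y_{\ibs(k)}(\aop(x_k^\delta)-\ndata)}^2 + \inner{\QQ^\Y_{\ibs(k)}(\aop(x_k^\delta)-\ndata)}{\QQ^\Y_{\ibs(k)}(\ndata - y)},
\end{equation*}
using again that $\QQ^\Y_{\ibs(k)}$ is an orthogonal projection to collapse the first term. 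The first term on the right is exactly $(\res^\delta_k)^2$, and by Cauchy--Schwarz the second term is bounded below by $-\res^\delta_k\,\snorm{\QQ^\Y_{\ibs(k)}(y - \ndata)} \geq -\res^\delta_k\,\delta_{\ibs(k)}$ under the hypothesis $\snorm{\QQ^\Y_\ib(y-\ndata)}\le\delta_\ib$. Putting this lower bound back (with the overall minus sign it becomes an upper bound) yields exactly \eqref{eq:mon}.

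The final assertion is then immediate: under $\res^\delta_k \ge \delta_{\ibs(k)}$ the factor $\res^\delta_k - \delta_{\ibs(k)}$ is nonnegative, so the right-hand side of \eqref{eq:mon} is a downward parabola in $s^\delta_k$ with roots at $0$ and at the upper endpoint in \eqref{eq:stepsize}; hence it is $\le 0$ for $s^\delta_k$ in that range, giving the claimed monotonicity $\snorm{w_{k+1}^\delta}^2 \le \snorm{w_k^\delta}^2$. The main obstacle I anticipate is purely bookkeeping: correctly tracking the normalization factors $\snorm{v_\ib}^{-2}$ in the definition of $\QQ^\X_\ib$, $\QQ^\Y_\ib$ versus the factors $\snorm{v_\ib}^2$ appearing in \eqref{eq:iter1}, and verifying the intertwining relations $\vop_\X\PP_\ib\aop^* = \snorm{v_\ib}^2 \QQ^\X_\ib\kop_{\nd}^*$ and $\kop_{\nd}\QQ^\X_\ib = \QQ^\Y_\ib\kop_{\nd}$ cleanly from the tensor-product definitions; none of these is conceptually hard but all must be done with care to land on \eqref{eq:mon} with the stated constants.
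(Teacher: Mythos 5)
Your proposal is correct and follows essentially the same route as the paper's proof: expand the square via the polarization identity, rewrite the cross term on the $\Y$-side using $\aop=\kop_{\nd}\circ\vop_\X$ and the self-adjoint projector $\QQ^{\Y}_{\ibs(k)}$, split off the noise with $\aop(x^*)=y$ and Cauchy--Schwarz, and then observe that the right-hand side of \eqref{eq:mon} is nonpositive for step sizes in the range \eqref{eq:stepsize}. The only cosmetic caveat is that your aside about $\QQ^{\X}_{\ib}$ commuting with $\vop_\X$ is unnecessary (and not quite meaningful, since they act on different product spaces when $\nd\neq\nb$); the intertwining relation and self-adjointness you actually invoke are what the paper uses as well.
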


\begin{proof}
Equation~\eqref{eq:iter1} implies
\begin{multline} \label{eq:monaux}
        \frac{1}{2}\norm{\vop_\X x_{k+1}^\delta - \vop_\X x^*}^2 - \frac{1}{2}\norm{\vop_\X x_k^\delta - \vop_\X x^*}^2 \leq
\inner{ \vop_\X  x_k^\delta - \vop_\X x^*  }{ \vop_\X  x_{k+1}^\delta - \vop_\X x_k^\delta} \\
+\frac{(s^\delta_k)^2}{2} \, \norm{\vop_\X \PP_{\ibs(k)}\aop^*\kl{ \ndata - \aop (x_k^\delta)}}^2 \,.
\end{multline}
We have
\begin{align*}
&\inner{\vop_\X  x_k^\delta - \vop_\X x^*  }{\vop_\X x_{k+1}^\delta - \vop_\X x_k^\delta}
\\ & =
s^\delta_k  \snorm{v_{\ibs(k)}}^2
 \inner{ \vop_\X  (x_k^\delta - x^* ) }{ \QQ^{\Y}_{\ibs(k)} \kop_{\nd}^*
 \kl{ \data^\delta - \aop (x_k^\delta)  } }
\\ & =
s^\delta_k  \snorm{v_{\ibs(k)}}^2
 \inner{ \kop_{\nb} \vop_\X  (x_k^\delta - x^* ) }{ \QQ^{\Y}_{\ibs(k)}
 \kl{ \data^\delta - \aop (x_k^\delta)  } }
\\ & =
s^\delta_k  \snorm{v_{\ibs(k)}}^2
 \inner{ \aop  (x_k^\delta) - \aop(x^*) }{ \QQ^{\Y}_{\ibs(k)}
 \kl{ \data^\delta - \aop (x_k^\delta)  } }
\\ & =
s^\delta_k  \snorm{v_{\ibs(k)}}^2
 \inner{ \aop  (x_k^\delta) - \data^\delta + \data^\delta - \aop(x^*) }{ \QQ^{\Y}_{\ibs(k)}
 \kl{ \data^\delta - \aop (x_k^\delta)  } }
\\ & \leq
s^\delta_k  \snorm{v_{\ibs(k)}}^2
 \kl{
 - \snorm{ \QQ^{\Y}_{\ibs(k)} (\data^\delta - \aop (x_k^\delta) ) }^2
 + \delta_{\ib(k)} \snorm{ \QQ^{\Y}_{\ibs(k)} (\data^\delta - \aop (x_k^\delta) ) }
 }
 \end{align*}
 By combining \eqref{eq:monaux} with the  above estimate, we obtain
 \begin{multline*}
    \frac{1}{2}\norm{\vop_\X x_{k+1}^\delta - \vop_\X x^*}^2 - \frac{1}{2}\norm{\vop_\X x_k^\delta - \vop_\X x^*}^2
    \leq
         s^\delta_k  \res^\delta_k \snorm{v_{\ibs(k)}}^2 \Bigl( \delta_{\ib(k)}
         - \res^\delta_k \Bigr)
         \\
        +\frac{(s^\delta_k)^2}{2} \, \norm{\vop_\X \PP_{\ibs(k)}\aop^*\kl{ \ndata - \aop (x_k^\delta)}}^2 \,,
\end{multline*}
which is the desired estimate \eqref{eq:mon}.
If $s^\delta_k$ is chosen according to \eqref{eq:stepsize},  then the right hand side in inequality  \eqref{eq:mon} is less or equal to 0, which implies  $\snorm{\vop_\X x_{k+1}^\delta - \vop_\X x^*}^2 \leq  \snorm{\vop_\X x_k^\delta - \vop_\X x^*}^2$.
\end{proof}

\subsection{Loping BCD and discrepancy principle}

From Lemma~\ref{lem:mon} we see that if  the residual term
$\res^\delta_k = \snorm{\QQ^{\Y}_{\ibs(k)} \skl{ \ndata-\aop (x_k^\delta) } } $
satisfies \eqref{eq:res}, then the error  $\snorm{ \vop_\X x_k^\delta - \vop_\X x^*}$
is decreasing. In the case that  \eqref{eq:res} does not hold, then
an iterative update might increase the value  of $\snorm{ \vop_\X x_k^\delta - \vop_\X x^*}$.
Following a similar strategy introduced in
\cite{haltmeier2007kaczmarz1,decesaro2008steepest} for Kaczmarz type iterative
method we therefore modify \eqref{eq:iter} by introducing a loping strategy as follows.

\begin{definition}[Loping BCD]
We\label{def:loping} define the loping BCD  method by
\begin{align} \label{eq:iterS1}
 	x_{k+1}^\delta
 	&\coloneqq
	x_k^\delta - d^\delta_k s^\delta_k  \PP_{\ibs(k)} \aop^*(\aop (x_k^\delta)-\ndata)
\\ \label{eq:iterS2}
    d^\delta_k
    &\coloneqq
    \begin{cases}
    	1
	& \text{ if }
	\res^\delta_k \geq   \tau \delta_{\ib(k)}
	\\
	0 & \text{ otherwise  }\,,
	\end{cases}
\end{align}
where $\res^\delta_k= \snorm{\QQ^{\Y}_{\ibs(k)} (\ndata-\aop( x_{k}^\delta))} $ is as in Equation~\eqref{eq:res}, and
\begin{equation}\label{eq:tau}
 {\rot \tau   >  1 }\,.
\end{equation}
\end{definition}

In the case of exact data, we have $d^\delta_k =1$ and the loping BCD
iteration reduces to the standard BCD. In the noisy data case the loping parameters $d^\delta_k$ ensure that no update is made if we cannot guarantee that an
update would decrease $\snorm{ \vop_\X x_k^\delta - \vop_\X x^*}$.
Note that  the  choice of $\tau$ as in \eqref{eq:tau} implies
that condition \eqref{eq:res} is satisfied whenever
we have $d^\delta_k =1$.
For the loping BCD,  Lemma~\ref{lem:mon} therefore
implies that the error term
$\snorm{ \vop_\X x_k^\delta - \vop_\X x^*}$  is in fact monotonically decreasing.
Moreover, we can show the following.

\begin{lemma}[Summability of squared residuals]\label{conv1}
Let $x^* \in \XX$ satisfy   $\aop(x^*) = y$.
Then the residuals $\res^\delta_k  \coloneqq  \snorm{ \QQ^{\Y}_{\ibs(k)}(\ndata - \aop(x_k^\delta))}$ of
 the loping  BCD iteration \eqref{eq:iterS1}, \eqref{eq:iterS2}
satisfy
    \begin{equation}\label{SumRes}
        \sum_{k\in \N}
        d^\delta_k s^\delta_k\snorm{v_{\ibs(k)}}^2 (\res^\delta_k)^2 \leq
        \frac{\norm{\vop_\X x_0 - \vop_\X x^*}^2}{\gamma_{\rm min}  (2- \theta_{\rm max})}\,,
    \end{equation}
 where,  $s^\delta_k$, $\gamma_{\rm min}$, $\theta_{\rm max}$
 are chosen such that
 \begin{enumerate}[label=(S\arabic*), leftmargin =3em]
 \item \label{sum1}
 $\forall k \in \N \colon d^\delta_k =1 \Rightarrow  s^\delta_k  \in (0, 2 A_k^\delta) $
 with  $ A_k^\delta \coloneqq
 \frac{ \snorm{v_{\ibs(k)}}^2 \res^\delta_k( \res^\delta_k -\delta_{\ib(k)} )}{ \norm{ \vop_\X \PP_{\ibs(k)}
	  \aop^*(
		\ndata - \aop(x_k^\delta) )}^2}$;

\item \label{sum2}
    $\forall k \in \N \colon d^\delta_k =1 \Rightarrow \theta_k \coloneqq s^\delta_k / A_k^\delta  \leq  \theta_{\rm max} <  2$;

\item \label{sum3}
    $1-1 / \tau \geq \gamma_{\rm min} >0 $.
 \end{enumerate}
\end{lemma}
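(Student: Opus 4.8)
The plan is to apply the monotonicity estimate \eqref{eq:mon} of Lemma~\ref{lem:mon} step by step, use the loping rule \eqref{eq:iterS2} to turn \eqref{eq:mon} into a clean one-sided decay, and then telescope over the iterations.

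Fix $N\in\N$ and consider a single index $k<N$. If $d^\delta_k=0$, then \eqref{eq:iterS1} gives $x^\delta_{k+1}=x^\delta_k$, so both the decrease $\norm{\vop_\X x^\delta_{k+1}-\vop_\X x^*}^2-\norm{\vop_\X x^\delta_k-\vop_\X x^*}^2$ and the $k$-th summand $d^\delta_k s^\delta_k\snorm{v_{\ibs(k)}}^2(\res^\delta_k)^2$ in \eqref{SumRes} vanish, so there is nothing to prove for this $k$. If $d^\delta_k=1$, then \eqref{eq:iterS1} is exactly the update \eqref{eq:iter} with step size $s^\delta_k$, so Lemma~\ref{lem:mon} yields \eqref{eq:mon} for this step; moreover \eqref{eq:iterS2} together with \eqref{eq:tau} forces $\res^\delta_k\ge\tau\delta_{\ib(k)}>\delta_{\ib(k)}$, hence $\res^\delta_k-\delta_{\ib(k)}>0$. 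Using the definition of $A^\delta_k$ from \ref{sum1} to substitute
\[
   \norm{\vop_\X \PP_{\ibs(k)}\aop^*(\ndata-\aop(x^\delta_k))}^2
   = \frac{\snorm{v_{\ibs(k)}}^2\,\res^\delta_k\,(\res^\delta_k-\delta_{\ib(k)})}{A^\delta_k}\,,
\]
the right-hand side of \eqref{eq:mon} collapses to $-\,s^\delta_k\snorm{v_{\ibs(k)}}^2\res^\delta_k(\res^\delta_k-\delta_{\ib(k)})\,(1-\theta_k/2)$ with $\theta_k=s^\delta_k/A^\delta_k$.

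Now I bring in the parameter conditions. By \ref{sum2} we have $1-\theta_k/2\ge 1-\theta_{\rm max}/2=(2-\theta_{\rm max})/2>0$, and by \ref{sum3} together with $\res^\delta_k\ge\tau\delta_{\ib(k)}$ we get $\res^\delta_k-\delta_{\ib(k)}\ge(1-1/\tau)\res^\delta_k\ge\gamma_{\rm min}\res^\delta_k$. Plugging both into \eqref{eq:mon} and multiplying by $2$ gives, for every $k$ (the factor $d^\delta_k$ also covering the skipped steps treated above),
\[
   \norm{\vop_\X x^\delta_{k+1}-\vop_\X x^*}^2 - \norm{\vop_\X x^\delta_k-\vop_\X x^*}^2
   \le -\,\gamma_{\rm min}(2-\theta_{\rm max})\,d^\delta_k s^\delta_k\snorm{v_{\ibs(k)}}^2(\res^\delta_k)^2 \,.
\]
Summing this for $k=0,\dots,N-1$, the left-hand side telescopes to $\norm{\vop_\X x^\delta_N-\vop_\X x^*}^2-\norm{\vop_\X x_0-\vop_\X x^*}^2$ (recall $x^\delta_0=x_0$), and dropping the nonnegative first term yields $\gamma_{\rm min}(2-\theta_{\rm max})\sum_{k=0}^{N-1}d^\delta_k s^\delta_k\snorm{v_{\ibs(k)}}^2(\res^\delta_k)^2\le\norm{\vop_\X x_0-\vop_\X x^*}^2$. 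Since this bound is independent of $N$ and $\gamma_{\rm min}(2-\theta_{\rm max})>0$, letting $N\to\infty$ gives \eqref{SumRes}.

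I do not expect a genuine conceptual obstacle: this is the standard telescoping/summability estimate familiar from Landweber and Kaczmarz analyses. The points that need care are the bookkeeping around the loping indicator $d^\delta_k$ (invoking Lemma~\ref{lem:mon} only on active steps and checking that skipped steps contribute nothing to either side) and verifying that the two inequalities keeping the right-hand side of \eqref{eq:mon} nonpositive — namely $\res^\delta_k>\delta_{\ib(k)}$ and $\theta_k<2$ — are exactly the ones supplied by \eqref{eq:iterS2}, \eqref{eq:tau} and conditions \ref{sum1}–\ref{sum3}.
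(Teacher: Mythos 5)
Your proof is correct and follows essentially the same route as the paper: apply Lemma~\ref{lem:mon} on the active steps, use the loping rule together with \ref{sum1}--\ref{sum3} to obtain the per-step decay $(2-\theta_{\rm max})\gamma_{\rm min}\, d^\delta_k s^\delta_k \snorm{v_{\ibs(k)}}^2(\res^\delta_k)^2$, and telescope. The only cosmetic difference is that you substitute the definition of $A_k^\delta$ exactly to collapse the right-hand side of \eqref{eq:mon}, whereas the paper bounds $s^\delta_k \leq \theta_{\rm max} A_k^\delta$ before substituting; the algebra is the same.
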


\begin{proof}
We first show
\begin{multline} \label{eq:saux}
        \norm{\vop_\X x_k^\delta - \vop_\X x^*}^2 - \norm{\vop_\X x_{k+1}^\delta - \vop_\X x^*}^2
    \\ \geq
           (2- \theta_{\rm max}) d^\delta_k s^\delta_k
           \snorm{v_{\ibs(k)}}^2(\res^\delta_k)^2
           \kl{ 1 - 1/\tau } \,.
    \end{multline}
If $\res^\delta_k  <   \tau \delta$, then  $d^\delta_k = 0$ and $x_{k+1}^\delta = x_k^\delta$ and  therefore \eqref{eq:saux} holds with equality.
If  $\res^\delta_k  \geq    \tau \delta$, application of
Lemma~\ref{lem:mon}, \ref{sum2} and \ref{sum1} yield
\begin{align*}
        \lVert \vop_\X x_k^\delta & - \vop_\X x^*\rVert^2
        -
        \lVert \vop_\X x_{k+1}^\delta - \vop_\X x^*\rVert^2     \\
    &\geq
        2s^\delta_k \snorm{v_{\ibs(k)}}^2\res^\delta_k (-\delta_{\ib(k)} +\res^\delta_k)    -(s^\delta_k)^2 \snorm{\vop_\X \PP_{\ibs(k)}\aop^*(\ndata - \aop (x_k^\delta))}^2
  \\&    \geq
        2s^\delta_k \snorm{v_{\ibs(k)}}^2\res^\delta_k  (-\delta_{\ib(k)} + \res^\delta_k )       -s^\delta_k  \theta_{\rm max}  A_k^\delta \snorm{\vop_\X \PP_{\ibs(k)}\aop^*(\ndata - \aop (x_k^\delta))}^2
 \\ &
    = 2s^\delta_k \snorm{v_{\ibs(k)}}^2\res^\delta_k  (-\delta_{\ib(k)} + \res^\delta_k )       -s^\delta_k \theta_{\rm max} \snorm{v_{\ibs(k)}}^2\res^\delta_k  (-\delta_{\ib(k)} + \res^\delta_k )
 \\ &
=
        (2- \theta_{\rm max})s^\delta_k\snorm{v_{\ibs(k)}}^2\res^\delta_k  ( \res^\delta_k
        -\delta_{\ib(k)} )
 \\ &
    \geq
        (2- \theta_{\rm max})s^\delta_k\snorm{v_{\ibs(k)}}^2
        (\res^\delta_k)^2
        \kl{ 1 -1/\tau } \,.
\end{align*}
This shows \eqref{eq:saux} with $d^\delta_k = 1$ in \eqref{eq:iterS2}.

Summing \eqref{eq:saux} over all $k \in \N$ and using  \ref{sum3}
we obtain
\begin{equation*}
    \norm{\vop_\X x_0  - \vop_\X x^*}^2 \geq (2-\theta_{\rm max})\gamma_{\rm min}
    \sum_{k \in \N} d_k s^\delta_k\snorm{v_{\ibs(k)}}^2  (\res^\delta_k)^2 \,,
\end{equation*}
which shows \eqref{SumRes} after dividing by
$(2-\theta_{\rm max})\gamma_{\rm min}$.
\end{proof}

\begin{remark}
Note the  conditions  for   the step sizes in  Lemma~\ref{conv1}
are inspired by \cite{Neubauer17}, where  a new
step size rule for  the gradient method for ill-posed problems
has been introduced.
From the definitions of $\res^\delta_k, d^\delta_k$ we obtain
$\res^\delta_k  - \delta_{\ib(k)}  \geq   =   (1-1/\tau) \res^\delta_k  $.
Moreover, recall that $\vop_\X \PP_{\ibs(k)} \vop_\X^* =  \snorm{v_{\ibs(k)}}^2 \QQ^{\X}_{\ibs(k)}$.
Consequently,
\begin{multline*}
A_k^\delta
  =
 \frac{ \snorm{v_{\ibs(k)}}^2 \res^\delta_k( \res^\delta_k  -\delta_{\ib(k)} ) }{\snorm{ \vop_\X \PP_{\ibs(k)} \vop_\X^*
	  \kop_{\nd}^*(
		\ndata - \aop(x_k^\delta) )}^2}
 \geq
 \kl{ 1 - \frac{1}{\tau} }
 \frac{ \snorm{\QQ^{\Y}_{\ibs(k)} (\ndata-\aop( x_{k}^\delta))}^2 }
 {\snorm{
  \QQ^{\X}_{\ibs(k)} \kop_{\nd}^*  (
\ndata - \aop(x_k^\delta) )}^2}  		\\
  \geq
 \gamma_{\rm min} \frac{  \snorm{\QQ^{\Y}_{\ibs(k)} (\ndata-\aop( x_{k}^\delta))}^2 }
 {\snorm{
  \kop_{\nb}^* \QQ^{\Y}_{\ibs(k)}   (
\ndata - \aop(x_k^\delta) )}^2}  	 \geq
 \frac{ \gamma_{\rm min}}
 {\snorm{\kop_{\nb}^*}^2}
   \,.		
\end{multline*}
This implies that we can choose the step sizes  bounded from below.
In particular,  \eqref{SumRes} holds for any constant step size
choice  $s_k^\delta = s_\star \in (0, \gamma_{\rm min} / {\snorm{\kop_{\nb}^*}^2}]$.
\end{remark}

\section{Convergence Analysis of the BCD method}
\label{sec:analysis}

Throughout the following, let Assumption \ref{ass}
be satisfied. Moreover, we assume that the step sizes
satisfy $s_{\rm min} \leq s_k^\delta \leq s_{\rm max}$
for some numbers $s_{\rm min} \leq s_{\rm max}$ independent of
the iteration index $k \in \N $ and the noise level
$\delta \geq 0$, and that \ref{sum1}-\ref{sum3}
in Lemma \ref{conv1} hold.

\subsection{Convergence for exact data}

In this subsection we show convergence of the BCD iteration in the
noise free case. The proof closely follows \cite{decesaro2008steepest,kowar2002}.

\begin{theorem}[Convergence of BCD for exact data]\label{thm:exact}
  In the exact  data case   $\delta =0$,
  the BCD iteration  $(x_k)_{k\in \N}$
  defined by  \eqref{eq:iter},  satisfies
  $x_k \to x^\plus$, where $x^\plus$ is the solution of $\aop(x) = y$
  with minimal distance to $x_0$.
\end{theorem}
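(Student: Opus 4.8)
The plan is to follow the classical Landweber/Kaczmarz convergence scheme as in \cite{decesaro2008steepest,kowar2002}, but carried out on the auxiliary sequence $u_k := \vop_\X x_k$ rather than on $x_k$ itself, since it is $u_k$ for which we have monotonicity (Lemma~\ref{lem:mon}, with $d_k^\delta = 1$ in the exact-data case). First I would fix the minimum-distance solution $x^\plus$ (which exists and is unique since the solution set is a nonempty closed affine subspace, as $\aop$ is bounded linear), and note that by assumption \ref{ass2} the map $\vop_\X$ is injective with closed range (because $\Vo$ has rank $\nb$), so $\vop_\X$ is bounded below: $\norm{\vop_\X w} \geq c \norm{w}$ for some $c>0$. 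Hence convergence of $(u_k)$ in $\YY^{\text{-side}}$ is equivalent to convergence of $(x_k)$, and it suffices to show $u_k \to \vop_\X x^\plus$. By Lemma~\ref{lem:mon}, $\norm{u_k - \vop_\X x^*}$ is nonincreasing for \emph{every} solution $x^*$; in particular $(u_k)$ is bounded and $\norm{u_k - \vop_\X x^*}$ converges for every solution $x^*$. Lemma~\ref{conv1} (with $\delta = 0$, $d_k^\delta \equiv 1$) gives $\sum_k s_k \snorm{v_{\ibs(k)}}^2 (\res_k)^2 < \infty$, so together with the uniform step-size bounds $s_k \geq s_{\min} > 0$ and $\snorm{v_\ib} \geq c' > 0$ (non-vanishing columns), we get $\res_k = \snorm{\QQ^\Y_{\ibs(k)}(\aop(x_k) - y)} \to 0$.

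The next step is the standard argument that $(u_k)$ is Cauchy. For $j \le k$ choose $l \in [j,k]$ minimizing $\norm{u_l - \vop_\X x^*}$ over that range; then by the parallelogram-type identity and monotonicity, $\norm{u_j - u_k}^2 \le 2\norm{u_j - u_l}^2 + 2\norm{u_l - u_k}^2$, and each term is controlled by $2\inner{u_l - \vop_\X x^*}{u_l - u_j}$ (resp.\ with $k$) plus a term that telescopes. Expanding $\inner{u_l - \vop_\X x^*}{u_{i+1} - u_i}$ using \eqref{eq:iter1} and the computation already carried out in the proof of Lemma~\ref{lem:mon} (replacing $x^*$ by $x_l$ inside the inner product, and using $\aop(u) = \kop_{\nd}\,$-type identities to rewrite $\vop_\X(x_i - x_l)$ as $\aop(x_i) - \aop(x_l)$ after applying $\kop_\nb$), each such inner product is bounded by $s_i \snorm{v_{\ibs(i)}}^2 \res_i \bigl(\res_i + \snorm{\QQ^\Y_{\ibs(i)}(\aop(x_i) - \aop(x_l))}\bigr)$. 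Here I would use that $\snorm{\QQ^\Y_{\ibs(i)}(\aop(x_i) - \aop(x_l))} \le \snorm{\QQ^\Y_{\ibs(i)}(\aop(x_i) - y)} + \snorm{\QQ^\Y_{\ibs(i)}(y - \aop(x_l))} \le \res_i + \sup_{m \ge j}\res_m \to 0$, so the bracket goes to $0$ uniformly as $j \to \infty$; combined with $\sum_i s_i \snorm{v_{\ibs(i)}}^2 \res_i^2 < \infty$ and Cauchy--Schwarz (or directly, since $\res_i \to 0$ makes the bracket small), the sums $\sum_{i=j}^{k-1}|\inner{u_l - \vop_\X x^*}{u_{i+1}-u_i}|$ tend to $0$. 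Thus $(u_k)$ is Cauchy, hence $u_k \to u^\star$ for some $u^\star$, and since $\vop_\X$ has closed range, $u^\star = \vop_\X x^\star$ for some $x^\star$, and $x_k \to x^\star$ because $\vop_\X$ is bounded below.

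It remains to identify $x^\star$. First, $x^\star$ is a solution: from $\res_k \to 0$ and assumption \ref{ass4} (each block is selected within any window of $p$ consecutive indices), every component $\snorm{\QQ^\Y_\ib(\aop(x_k) - y)}$ has a subsequence tending to $0$; combined with $x_k \to x^\star$ and continuity of $\aop$, this forces $\QQ^\Y_\ib(\aop(x^\star) - y) = 0$ for all $\ib$, and since the $\QQ^\Y_\ib$ together with $\ran(\Vo)$ and $\operatorname{rank}(\Vo) = \nb$ span enough directions (indeed $\aop(x^\star) - y = \kop_\nd(\vop_\X x^\star) - y$ lies in $\ran(\vop_\Y) = \ran(\Vo)\otimes\Y$ modulo the data, and is annihilated by all $\QQ^\Y_\ib$), we conclude $\aop(x^\star) = y$. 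Second, $x^\star = x^\plus$: the increments $u_{k+1} - u_k = -s_k \snorm{v_{\ibs(k)}}^2 \QQ^\X_{\ibs(k)}\kop_\nd^*(\aop(x_k)-y)$ lie in $\ran(\kop_\nd^* \text{ composed with } \QQ^\X)$, which is contained in $\overline{\ran(\aop^*)}$ after peeling off $\vop_\X$; more precisely $x_{k+1} - x_k \in \ran(\PP_{\ibs(k)}\aop^*) \subseteq \overline{\ran(\aop^*)} = \nr(\aop)^\perp$, so $x_k - x_0 \in \nr(\aop)^\perp$ for all $k$, hence $x^\star - x_0 \in \nr(\aop)^\perp$; since $x^\star$ and $x^\plus$ are both solutions, $x^\star - x^\plus \in \nr(\aop)$, and as $x^\plus$ is the solution closest to $x_0$ we have $x^\plus - x_0 \in \nr(\aop)^\perp$ too, so $x^\star - x^\plus$ is orthogonal to itself, i.e.\ $x^\star = x^\plus$. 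I expect the main obstacle to be the Cauchy estimate: one must be careful that the cross terms $\inner{u_l - \vop_\X x^*}{u_{i+1}-u_i}$ are handled by rewriting $\vop_\X$-inner products as $\aop$-residual inner products exactly as in the Lemma~\ref{lem:mon} computation, and that the ``bracket'' factor is made small using $\sup_{m\ge j}\res_m \to 0$ rather than attempting an $\ell^2$ bound on a quantity that is only known to be $\ell^2$ after multiplication by $\res_i$; getting these summability bookkeeping details right, together with verifying that $\QQ^\Y_\ib$-residuals vanishing for all $\ib$ really does imply the full residual vanishes (which is where $\operatorname{rank}(\Vo) = \nb$ enters), is the delicate part.
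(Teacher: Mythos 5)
Your overall route coincides with the paper's: pass to the auxiliary sequence $\vop_\X x_k$, use Lemma~\ref{lem:mon} for monotonicity and Lemma~\ref{conv1} for summability of the weighted squared block residuals, prove that $(\vop_\X x_k)_{k\in\N}$ is Cauchy via an intermediate index, and identify the limit using $\operatorname{rank}(\Vo)=\nb$ together with $x_{k+1}-x_k\in\ran(\aop^*)\subseteq\nr(\aop)^\perp$; your endgame (the limit solves $\aop(x)=y$ and equals $x^\plus$) is fine and matches the paper. The genuine gap is in the Cauchy estimate. First, choosing $l\in[j,k]$ to minimize $\norm{\vop_\X x_l-\vop_\X x^*}$ is vacuous: by Lemma~\ref{lem:mon} this quantity is nonincreasing, so the minimizer is simply $l=k$ and the choice gives you no leverage. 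What the argument needs, and what the paper does in \eqref{eq:conv0}, is an intermediate index chosen to minimize a \emph{residual} quantity --- because of the block structure, the sum over one full cycle of the block residuals $\sum_{i_1=0}^{p-1}\snorm{\QQ^{\Y}_{\ibs(pn_0+i_1)}(y-\aop(x_{pn_0+i_1}))}$ --- precisely so that the cross terms can be dominated by residuals belonging to the same cycle as the running index $i$.

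Second, your inequality $\snorm{\QQ^{\Y}_{\ibs(i)}(y-\aop(x_l))}\le\sup_{m\ge j}\res_m$ is unjustified: $\res_m$ controls only the block $\ibs(m)$ that is active at step $m$, whereas you need the block-$\ibs(i)$ residual at an iterate $x_l$ where that block is in general not the active one. Controlling it requires propagating through (part of) a cycle, which is exactly the paper's estimate \eqref{eq:conv3} bounding $\norm{\QQ^{\Y}_{\ibs(i)}\aop(x_{i^*}-x_n)}$ by $\snorm{\kop_{\nd}}^2 s_{\rm max}v_{\rm max}^2$ times a cycle sum of block residuals; this is also where assumption \ref{ass4} enters structurally, not only in the identification of the limit. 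Third, even granting that your bracket tends to zero, the summation does not close: you are left with $\sup(\text{bracket})\cdot\sum_{i\ge j}s_i\snorm{v_{\ibs(i)}}^2\res_i$, and Lemma~\ref{conv1} gives only square-summability of the weighted residuals, so $\sum_i\res_i$ may diverge and a small uniform factor does not help; Cauchy--Schwarz does not rescue it either, since the companion factor is merely bounded. The paper's minimizing-cycle choice is exactly what makes the bracket itself comparable to the cycle residual sum at the cycle containing $i$, so that after $(\sum_{i_1=0}^{p-1}a_{i_1})^2\le p\sum_{i_1=0}^{p-1}a_{i_1}^2$ everything reduces to the summable series of Lemma~\ref{conv1}. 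Your plan becomes correct once you replace your choice of $l$ and the $\sup_{m}\res_m$ bound with this cycle-wise bookkeeping.
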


\begin{proof}
Let $x^* \in \XX$ satisfy $\aop(x^*) = y$ and
define $\xi_k \coloneqq \vop_\X x_k- \vop_\X x^*$.
We will show that $(\xi_k)_{k \in \N}$ is a
Cauchy sequence.
For $k =k_0 p+k_1$ and $l=l_0p+l_1$ with $k \leq l$ and $k_1,l_1 \in \{0,\dots,p-1\}$, let $n_0 \in \{k_0,\dots,l_0\}$ be such that
\begin{align}\label{eq:conv0}
\sum_{i_1=0}^{p-1}&\norm{\QQ^{\Y}_{\ibs(p n_0+i_1)}(y-\aop(x_{p n_0+i_1}))}\\ \notag
&\leq \sum_{i_1=0}^{p-1}\norm{\QQ^{\Y}_{\ibs(p i_0+i_1)} (y-\aop(x_{p i_0+i_1}))} \text{ for all } i_0 \in \{k_0,\dots,l_0\}\,.
\end{align}
With $n \coloneqq p n_0 + p - 1$ we have
\begin{equation}\label{errnorm1}
    \norm{\xi_k-\xi_l} \leq \norm{\xi_k-\xi_n}+\norm{\xi_l-\xi_n}
\end{equation}
and
\begin{align}\label{errnorm2}
    \norm{\xi_n-\xi_k}^2 &= 2\inner{\xi_n-\xi_k}{\xi_n}+ \norm{\xi_k}^2 - \norm{\xi_n}^2\,, \\ \label{errnorm2a}
    \norm{\xi_n-\xi_l}^2 &= 2\inner{\xi_n-\xi_l}{\xi_n} + \norm{\xi_l}^2 - \norm{\xi_n}^2\,.
\end{align}
According to Lemma~\ref{lem:mon}, the nonnegative sequence  $(\norm{\xi_k})_{k \in \N}$ is monotonically decreasing and therefore converges to some $\eps \geq 0$.
Consequently, the  last two terms in equations \eqref{errnorm2} and \eqref{errnorm2a}
converge to $\varepsilon^2-\varepsilon^2 = 0$ for $k \to \infty$. In order to show that also $\inner{\xi_n-\xi_k}{\xi_n}$ and $\inner{\xi_n-\xi_l}{\xi_n}$ converge to zero, we set
$i^* \coloneqq pn_0 + i_1$. Then using the definition of the BCD method in (\ref{eq:iter})
for $i \in \{ 0, \dots, p-1 \}$ we obtain
\begin{align}\label{eq:conv}
&  \abs{\inner{\xi_n-\xi_k}{\xi_n}}  = \abs{\inner{\vop_\X x_n-\vop_\X x_k}{\vop_\X x^*-\vop_\X x_n}} \\  \notag
& = \abs{\sum_{i=k}^{n-1}  s_i \inner{ \vop_\X \PP_{\ibs(i)} \aop^*(\aop(x_i)-y)}{ \vop_\Y (x^*-x_n)} } \\ \notag
& \leq v_{\rm max}^2 \sum_{i=k}^{n-1} s_i\abs{\inner{ \QQ^{\X}_{\ibs(i)} \aop^*(\aop(x_i)-y)}{x^*-x_n}} \\ \notag
& = v_{\rm max}^2 \sum_{i=k}^{n-1} s_i\abs{\inner{ \aop(x_i)-y}{\QQ^{\Y}_{\ibs(i)} \aop(x^*-x_n)}} \\ \notag
& = v_{\rm max}^2  \sum_{i=k}^{n-1} s_i\abs{\inner{\aop(x_i)-y}{\QQ^{\Y}_{\ibs(i)} \aop(x^*-x_{i^*}) + \QQ^{\Y}_{\ibs(i)} \aop(x_{i^*}-x_n)}}\\ \notag
&\leq v_{\rm max}^2 \sum_{i=k}^{n-1} s_i \norm{\QQ^{\Y}_{\ibs(i)}(\aop(x_i)-y)}\norm{\QQ^{\Y}_{\ibs(i)} \aop(x^*-x_{i^*})}\\ \notag
 &\hspace{0.1\textwidth} + v_{\rm max}^2 \sum_{i=k}^{n-1}s_i \norm{\QQ^{\Y}_{\ibs(i)}(\aop(x_i)-y)}\norm{\QQ^{\Y}_{\ibs(i)} \aop(x_{i^*}-x_n)}\,,
\end{align}
with $v_{\rm max} \coloneqq \max \set{\snorm{v_1}, \dots, \snorm{v_{\nb}}}$.
Further we obtain
\begin{align}\label{eq:conv3}
&\norm{\QQ^{\Y}_{\ibs(i)}\aop(x_{i^*}-x_n)}
\\ \notag
& = \norm{\QQ^{\Y}_{\ibs(i)} \kop_{\nd} \vop_\X (x_{i^*}-x_n)}
\\ \notag
& \leq \snorm{\kop_{\nd}} \norm{\vop_\X(x_{i^*}-x_n)}\\ \notag
&\leq \snorm{\kop_{\nd}}\sum_{j=i_1}^{p-2}s_j\norm{\vop_\X \PP_{\ibs(p n_0 +j)} \aop^*(y-\aop(x_{p n_0+j}))}\\ \notag
&= \snorm{\kop_{\nd}} \sum_{j=i_1}^{p-2}s_j\norm{\vop_\X \PP_{\ibs(p n_0 +j)}\vop_\X^{*} \kop_{\nd}^{*}(y-\aop(x_{p n_0+j}))}\\ \notag
&= \snorm{\kop_{\nd}} \sum_{j=i_1}^{p-2}s_j\norm{v_{\ibs(p n_0 +j)}}^2\norm{ \QQ^{\X}_{\ibs(p n_0 +j)} \kop_{\nd}^{*}(y-\aop(x_{p n_0+j}))}\\ \notag
&\leq \snorm{\kop_{\nd}} \sum_{j=i_1}^{p-2}s_j\norm{v_{\ibs(p n_0 +j)}}^2\norm{  \kop_{\nd}^{*}(\QQ^{\Y}_{\ibs(p n_0 +j)}(y-\aop(x_{p n_0+j})))}\\
&\leq \snorm{\kop_{\nd}}^2 s_{\rm max} v_{\rm max}^2 \sum_{j=0}^{p-1}\norm{\QQ^{\Y}_{\ibs(p n_0 +j)}(y-\aop(x_{p n_0+j}))} \,.
\end{align}
Substituting the estimate in (\ref{eq:conv}), using
the inequality $(\sum_{i=0}^{p-1}a_i)^2 \leq  p \sum_{i=0}^{p-1}a_{i}^{2}$ and (\ref{eq:conv0}) one concludes
\begin{align}\label{eq:conv4}
&\abs{\inner{\xi_n-\xi_k}{\xi_n}} \\ \notag
&\leq  2 p s_{\rm max} v_{\rm max}^2  \sum_{i_0=k_0}^{n_0-1}\sum_{i_1=0}^{p-1}\norm{\QQ^{\Y}_{\ibs(pi_0+i_1)}(y-\aop(x_{p i_0 + i_1}))}^2\\ \notag
& +s_{\rm max}^2 \snorm{\kop_{\nb}}^2 v_{\rm max}^4\sum_{i_0=k_0}^{n_0-1}\sum_{i_1=0}^{p-1}\norm{\QQ^{\Y}_{\ibs(pi_0+i_1)}(y-\aop(x_{p i_0 + i_1}))} \sum_{j=0}^{p-1}\norm{\QQ^{\Y}_{\ibs(p i_0 +j)}(y-\aop(x_{p i_0 + j}))} \\ \notag
&\leq 2 p   s_{\rm max} v_{\rm max}^2 \sum_{i_0=k_0}^{n_0-1}\sum_{i_1=0}^{p-1}\norm{\QQ^{\Y}_{\ibs(pi_0+i_1)}(y-\aop(x_{p i_0 + i_1}))}^2\\ \notag
& \qquad + s_{\rm max}^2 \snorm{\kop_{\nb}}^2 v_{\rm max}^4 \sum_{i_0=k_0}^{n_0-1}\Bigl(\sum_{i_1=0}^{p-1}\norm{\QQ^{\Y}_{\ibs(pi_0+i_1)}(y-\aop(x_{p i_0 + i_1}))}\Bigr)^2 \\ \notag
&\leq C \sum_{i_0=k_0}^{n_0-1}\sum_{i_1=0}^{p-1}\norm{\QQ^{\Y}_{\ibs(pi_0+i_1)}(y-\aop(x_{p i_0 + i_1}))}^2 \,,
\end{align}
where we defined $ C\coloneqq s_{\rm max} v_{\rm max}^2 (2p + s_{\rm max} \snorm{\kop_{\nb}}^2 v_{\rm max}^2 p)$. Finally, we have
\begin{equation*}
\abs{\inner{\xi_n-\xi_k}{\xi_n}} \leq \frac{C}{s_{\rm min}}\sum_{i_0=k_0}^{n_0-1}\sum_{i_1 =0}^{p-1}s_{p i_0 +i_1}\norm{\QQ^{\Y}_{\ibs(i_1)}(y-\aop(x_{p i_0 + i_1}))}^2\,.
\end{equation*}
Because of Lemma~\ref{conv1}, the last sum converges to zero for $k=pk_0+k_1 \to \infty$ which implies $\abs{\inner{\xi_n-\xi_k}{\xi_n}}\to 0$. Similarly, one shows $\abs{\inner{\xi_n-\xi_l}{\xi_n}}\to 0$. Therefore, $\xi_k$ is Cauchy sequence and $\vop_\X x_k = \vop_\X x^{*} - \xi_k$ tends to an element {\rot $\vop_\X x^\plus$ with  $x^\plus \in \XX$. Because $\Vo$ has rank $\nb$ and  $\snorm{\QQ^{\Y}_{\ibs(i)}(y-\aop(x_i))} \to 0$, the element $x^\plus$ is a  solution of $\aop(x)=\data$.}  Further,
\[x_{k+1}-x_k \in \ran(\aop^*) \subseteq \nr( \aop )^{\perp}
\qquad \text{ for all $k \in \N$}\,. \]
Because $\nr( \aop  )^{\perp}$ is closed, its follows that $ x^* - x_0 \in \nr( \aop  )^{\perp}$.
Since $x^\plus$ is the only solution for which the latter holds true,  we obtain $x_k \to x^\plus$.
\end{proof}

\subsection{Convergence for noisy data}

In the noisy data case, we consider the loping version of the BCD. The iteration is
terminated when for the first time all $x^\delta_k$ are equal within a cycle.
That is, we stop the iteration at
\begin{equation} \label{eq:def-discrep}
    k_*^\delta  :=  \argmin \set{ k  \in \N \mid
    x_{k}^\delta = x_{k+1}^\delta = \cdots = x_{k+p-1}^\delta  } \,.
\end{equation}
{\rot To simplify the notation, we assume that $\delta_\ib =\delta$ for all $\ib \in \set{1, \dots , \nb}$.} We first show that the stopping index is always finite.

\begin{proposition}[Existence of stopping index] \label{prop:st-f}
If $\delta > 0$, then the stopping index   $k_*^\delta$ defined in \eqref{eq:def-discrep}
is finite, and we have
\begin{equation} \label{eq:sdk-monot-res}
\forall \ib = 1, \dots, \nb \colon \quad
\norm{\QQ^{\Y}_{\ib } \kl{ \ndata-\aop \kl{ x_{k_*^\delta}^\delta } } }  < \tau \delta
 \, .
\end{equation}
\end{proposition}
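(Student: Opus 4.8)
The plan is to argue by contradiction: suppose $k_*^\delta = \infty$, meaning that for every $k \in \N$ there exists some index $i \in \set{k, \dots, k+p-1}$ with $x_{i+1}^\delta \neq x_i^\delta$, equivalently $d_i^\delta = 1$ and hence, by Definition~\ref{def:loping}, $\res_i^\delta \geq \tau\delta$. Using Assumption~\ref{ass4} (the control covers all blocks in any window of length $p$), one can in fact produce, for each cycle, at least one update step; more carefully, I would extract from the non-termination assumption a subsequence of indices $(k_j)_{j\in\N}$ with $k_j \to \infty$, $d_{k_j}^\delta = 1$, and $\res_{k_j}^\delta \geq \tau\delta$. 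Then the corresponding terms in the series of Lemma~\ref{conv1} satisfy
\begin{equation*}
d_{k_j}^\delta s_{k_j}^\delta \snorm{v_{\ibs(k_j)}}^2 (\res_{k_j}^\delta)^2
\geq s_{\rm min}\, \min_\ib \snorm{v_\ib}^2 \, \tau^2 \delta^2 > 0 \,,
\end{equation*}
using that the step sizes are bounded below by $s_{\rm min} > 0$, the columns $v_\ib$ are non-vanishing (Assumption~\ref{ass2}), and $\delta > 0$. Since this lower bound is a fixed positive constant independent of $j$, the series in \eqref{SumRes} diverges, contradicting Lemma~\ref{conv1}. Hence $k_*^\delta < \infty$.

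Next I would prove the residual estimate \eqref{eq:sdk-monot-res}. By the definition \eqref{eq:def-discrep} of the stopping index, $x_{k_*^\delta}^\delta = x_{k_*^\delta + 1}^\delta = \cdots = x_{k_*^\delta + p - 1}^\delta$, so in particular $x_{k_*^\delta + j}^\delta = x_{k_*^\delta + j + 1}^\delta$ for $j = 0, \dots, p-2$, which forces $d_{k_*^\delta + j}^\delta = 0$ and therefore $\res_{k_*^\delta + j}^\delta < \tau\delta$ for all these $j$ (using $\tau > 1$, cf.\ \eqref{eq:tau}). By Assumption~\ref{ass4}, the blocks $\ibs(k_*^\delta), \ibs(k_*^\delta+1), \dots, \ibs(k_*^\delta+p-1)$ exhaust $\set{1, \dots, \nb}$; but since all the iterates $x_{k_*^\delta}^\delta, \dots, x_{k_*^\delta + p-1}^\delta$ coincide (call this common value $x$), and $\res_{k_*^\delta + j}^\delta = \snorm{\QQ^{\Y}_{\ibs(k_*^\delta + j)}(\ndata - \aop(x))}$ depends only on the block $\ibs(k_*^\delta + j)$ and on $x$, the inequality $\res_{k_*^\delta + j}^\delta < \tau\delta$ over $j = 0, \dots, p-1$ translates into $\snorm{\QQ^{\Y}_{\ib}(\ndata - \aop(x))} < \tau\delta$ for every $\ib \in \set{1, \dots, \nb}$. (One subtlety: to get the window index $j = p-1$ as well, note that termination at $k_*^\delta$ already guarantees equality up to $x_{k_*^\delta + p - 1}^\delta$, and we may invoke the control property on the window $\set{k_*^\delta, \dots, k_*^\delta + p - 1}$; alternatively, since equality holds on a window of length $p$ which by Assumption~\ref{ass4} sees each block at least once, every block's residual at the common iterate is evaluated and must be below $\tau\delta$, else that step would have had $d^\delta = 1$, contradicting equality of the iterates across the window.)

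The main obstacle I anticipate is the careful bookkeeping in the second part: matching the window of indices $\set{k_*^\delta, \dots, k_*^\delta + p - 1}$ — over which Assumption~\ref{ass4} guarantees all blocks appear — against the exact range of indices for which the stopping criterion \eqref{eq:def-discrep} forces equality of consecutive iterates and hence $d^\delta = 0$. One must make sure that each block $\ib \in \set{1,\dots,\nb}$ is realized as $\ibs(j)$ for some $j$ in a range where we actually know both $x_j^\delta = x_{k_*^\delta}^\delta$ and $d_j^\delta = 0$; a clean way is to observe that if some block $\ib$ had $\res^\delta > \tau\delta$ at the common iterate, then at the iteration index in the window where that block is selected we would have $d^\delta = 1$ and the iterate would change, contradicting the definition of $k_*^\delta$ as the first index where the whole $p$-window is constant. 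The first part (finiteness) is comparatively routine given Lemma~\ref{conv1}.
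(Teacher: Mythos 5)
Your argument is essentially the paper's own proof: finiteness is obtained by observing that non-termination would force, in every window of length $p$, at least one active step with $d^\delta_k=1$ and hence $\res^\delta_k\geq\tau\delta$, so the summable series of Lemma~\ref{conv1} would diverge (your lower bound $s_{\rm min}\min_\ib\snorm{v_\ib}^2\tau^2\delta^2$ is in fact a cleaner version of the paper's constant), and \eqref{eq:sdk-monot-res} is then deduced exactly as in the paper from $d^\delta=0$ on the frozen window combined with Assumption~\ref{ass4}. The bookkeeping subtlety you flag about the last index of the $p$-window is glossed over in the paper's proof in the same way, so your proposal matches the published argument.
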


\begin{proof}
If for every  $k  \in \N$, there exists $\ell  \in
\set{0,\dots, p-1}$ such that   $x_{k +  \ell} \neq  x_{k}$, then
from Lemma~\ref{conv1} we obtain
\begin{multline} \label{eq:bound1}
     \forall n \in \N \colon
    \quad
 \norm{\vop_\X x_0  - \vop_\X x^*}^2 \geq
    (2-\theta_{\rm max})
    \gamma_{\rm min}
   \sum_{k =0}^{ n p - 1} d^\delta_k s^\delta_k  { \rot  \snorm{v_{\ibs(k)}}^2 (\res^\delta_k)^2}
    \\
    \geq (2-\theta_{\rm max})
    \gamma_{\rm min}
    C n p \tau \delta \,,
  \end{multline}
where $C>0$ is a lower bound of $s^\delta_k   \snorm{v_{\ibs(k)}}^2$.
The right hand side of \eqref{eq:bound1} tends to infinity,
which gives a contradiction. Consequently, the set
$ \{ k  \in \N \mid    x_{k}^\delta = x_{k+1}^\delta = \cdots = x_{k+p-1}^\delta  \} $
is nonempty and therefore contains  a finite minimal element.

To prove (\ref{eq:sdk-monot-res}) note that  the finiteness
of the stopping index  and the definition of the loping BCD implies
$\snorm{\QQ^{\Y}_{ \ibs(k_*^\delta+ \ell) } \skl{ \ndata-\aop \skl{ x_{k_*^\delta}^\delta } } }
< \tau \delta $ for $\ell = 0, \dots , p - 1$. The assumption  \ref{ass4} on the  control
sequence $\ibs(k)$ thus gives \eqref{eq:sdk-monot-res}.
\end{proof}

We call the step size selection $(s^\delta_k)_{k \in\N}$
continuous at $\delta =0$  if for all
$k \in \N$ we have
\begin{equation} \label{eq:conts}
\lim_{\delta \to 0}
\sup \{ \snorm{ s^\delta_k - s_k } \mid y^\delta  \in \YY
\wedge \snorm{y - \ndata} \leq \delta \}
    = 0 \,.
\end{equation}
An example for a continuous step size selection is the constant strep size
$s_k^\delta =  \gamma_{\rm min} / \snorm{\kop_{\nb}}^2$. The next auxiliary result shows that the continuity  of the step
size selection implies continuity of $x_{k}^\delta$ at $\delta =0$.

\begin{lemma}[Continuity of the BCD iteration at $\delta =0$] \label{lem:cont}
Suppose the step selection is continuous at $\delta =0$,
and define
\begin{equation*}
    \Delta_k(\delta, y, \ndata)
    \coloneqq
    d^\delta_k s^\delta_k \vop_\X \PP_{\ibs(k)} \aop^*(\aop (x_k^\delta)-\ndata)
    -
    s_k \vop_\Y  \PP_{\ibs(k)} \aop^*(\aop(x_k)-y) \,.
\end{equation*}
Then, for all $k \in \N$, we have
\begin{equation}\label{eq:cont}
\lim_{\delta \to 0}
     \sup
    \left\{ \snorm{ \Delta_k(\delta, y, \ndata) } \mid
    y^\delta  \in \YY
    \wedge
    \snorm{y - \ndata} \leq \delta \right\}
    = 0
\,.
\end{equation}
Moreover,  $x_{k+1}^\delta \to  x_{k+1}$, as $\delta \to 0$.
\end{lemma}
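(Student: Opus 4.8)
The plan is to prove both assertions simultaneously by induction on $k$, carrying along the stronger hypothesis that $x_k^\delta\to x_k$ holds \emph{uniformly} over admissible data, i.e.
\[
  \lim_{\delta\to 0}\ \sup\bigl\{\,\snorm{x_k^\delta-x_k}\ \bigm|\ \ndata\in\YY,\ \snorm{y-\ndata}\le\delta\,\bigr\}=0 .
\]
The base case $k=0$ is trivial, since $x_0^\delta=x_0$ is the fixed initial guess, independent of $\delta$ and $\ndata$. For the induction step I would first establish \eqref{eq:cont} and then read off $x_{k+1}^\delta\to x_{k+1}$ by rearranging the defining recursions.

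Two estimates do the work. Put $u_k^\delta\coloneqq\vop_\X\PP_{\ibs(k)}\aop^*(\aop(x_k^\delta)-\ndata)$ and $u_k\coloneqq\vop_\X\PP_{\ibs(k)}\aop^*(\aop(x_k)-y)$, so that $\Delta_k=d_k^\delta s_k^\delta u_k^\delta-s_k u_k$ (the second outer operator in the definition of $\Delta_k$ being $\vop_\X$, as the domains force). Using the elementary identity $\vop_\X\PP_{\ibs(k)}\vop_\X^*=\snorm{v_{\ibs(k)}}^2\QQ^{\X}_{\ibs(k)}$ together with $\aop^*=\vop_\X^*\kop_{\nd}^*$ one gets $u_k^\delta=-\snorm{v_{\ibs(k)}}^2\,\kop_{\nd}^*\QQ^{\Y}_{\ibs(k)}(\ndata-\aop(x_k^\delta))$, hence
\[
  \snorm{u_k^\delta}\le v_{\rm max}^2\snorm{\Ko}\,\res_k^\delta ,\qquad
  \snorm{u_k}\le v_{\rm max}^2\snorm{\Ko}\,\res_k ,
\]
with $v_{\rm max}\coloneqq\max_{\ib}\snorm{v_\ib}$ and $\res_k\coloneqq\snorm{\QQ^{\Y}_{\ibs(k)}(y-\aop(x_k))}$. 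Secondly, since $\QQ^{\Y}_{\ibs(k)}$ and $\aop$ are bounded, the induction hypothesis yields $\sup_{\snorm{y-\ndata}\le\delta}\sabs{\res_k^\delta-\res_k}\le\delta+\snorm{\aop}\,\sup_{\snorm{y-\ndata}\le\delta}\snorm{x_k^\delta-x_k}\to 0$, i.e.\ $\res_k^\delta\to\res_k$ uniformly.

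Now I split according to whether $\res_k$ vanishes. If $\res_k=0$, then $u_k=0$ and $\snorm{\Delta_k}=d_k^\delta s_k^\delta\snorm{u_k^\delta}\le s_{\rm max}v_{\rm max}^2\snorm{\Ko}\,\res_k^\delta\to 0$ uniformly by the second estimate. If $\res_k>0$, then for every $\delta$ small enough that $\sup_{\ndata}\sabs{\res_k^\delta-\res_k}<\res_k/2$ and $\tau\delta<\res_k/2$, every admissible $\ndata$ satisfies $\res_k^\delta>\res_k/2>\tau\delta$, so $d_k^\delta=1$; for such $\delta$, $\Delta_k=(s_k^\delta-s_k)u_k+s_k^\delta(u_k^\delta-u_k)$, where the first term vanishes uniformly by the continuity \eqref{eq:conts} of the step-size selection ($u_k$ being a fixed vector) and $\snorm{s_k^\delta(u_k^\delta-u_k)}\le s_{\rm max}\snorm{\vop_\X}\snorm{\aop}\bigl(\snorm{\aop}\snorm{x_k^\delta-x_k}+\delta\bigr)\to 0$ uniformly. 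This proves \eqref{eq:cont}. Subtracting the recursions \eqref{eq:iter} and \eqref{eq:iterS1} gives $x_{k+1}^\delta-x_{k+1}=(x_k^\delta-x_k)-\bigl(d_k^\delta s_k^\delta\PP_{\ibs(k)}\aop^*(\aop(x_k^\delta)-\ndata)-s_k\PP_{\ibs(k)}\aop^*(\aop(x_k)-y)\bigr)$; the bracketed term is handled by the \emph{same} argument (the outer $\vop_\X$ is irrelevant, or, equivalently, $\operatorname{rank}\Vo=\nb$ makes $\vop_\X$ bounded below, so that $\snorm{\Delta_k}\to0$ already controls it), and therefore tends to $0$ uniformly; with the induction hypothesis this closes the induction and in particular shows $x_{k+1}^\delta\to x_{k+1}$.

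The one genuinely delicate point — and the step I expect to require the most care — is that the loping indicator $d_k^\delta$ depends discontinuously on the data, so one cannot simply pass to the limit term by term. The dichotomy on $\res_k$ is precisely what resolves this: where the exact residual is positive the indicator is eventually locked to the value $1$ \emph{uniformly} over admissible $\ndata$ (because $\res_k^\delta\to\res_k$ uniformly while $\tau\delta\to 0$), matching the exact iteration; where it vanishes, the entire update block has norm $O(\res_k^\delta)=o(1)$ and its precise weight $d_k^\delta\in\{0,1\}$ is immaterial. Everything else is a routine propagation of uniform bounds through one iteration, using only the boundedness of $\aop$, $\kop_{\nd}$, $\vop_\X$.
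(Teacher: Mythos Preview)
Your proof is correct and follows the same inductive scheme as the paper, carrying uniform convergence of the iterates through one step. The difference lies in how the discontinuity of the loping indicator $d_k^\delta$ is handled. You split on the \emph{exact} residual $\res_k$: when $\res_k>0$ you show $d_k^\delta=1$ uniformly for all admissible $\ndata$ once $\delta$ is small, so the two recursions match termwise; when $\res_k=0$ the whole noisy update has size $O(\res_k^\delta)=o(1)$ regardless of $d_k^\delta$. The paper instead splits, for each fixed $(\delta,\ndata)$, on the \emph{value} of $d_k^\delta$: if $d_k^\delta=1$ it bounds $s_k^\delta u_k^\delta-s_k u_k$ directly; if $d_k^\delta=0$ it bounds the surviving exact term $s_k u_k$ via the triangle inequality
\[
\res_k\le \snorm{\QQ^{\Y}_{\ibs(k)}(\aop(x_k^\delta)-\aop(x_k))}+\res_k^\delta+\snorm{\QQ^{\Y}_{\ibs(k)}(\ndata-y)}\le \snorm{\QQ^{\Y}_{\ibs(k)}(\aop(x_k^\delta)-\aop(x_k))}+(\tau+1)\delta,
\]
using $\res_k^\delta<\tau\delta$ in this case. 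Your decomposition is arguably more transparent about \emph{why} the threshold causes no trouble (the indicator is eventually locked or else immaterial), and avoids bounding $\res_k$ when it is positive; the paper's decomposition is slightly more economical in that it does not need to isolate the borderline case $\res_k=0$ and gives a single explicit bound valid for every $(\delta,\ndata)$.
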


\begin{proof}
We prove Lemma \ref{lem:cont} by induction.
Assume $k \geq  0$ and that (\ref{eq:cont}) holds for all $k' < k$.
First we note that \eqref{eq:cont}  implies $x_{k+1}^\delta \to  x_{k+1}$, as $\delta \to 0$.
For the proof of  \eqref{eq:cont}  we consider two cases. In the
first case,  $d^\delta_k =1$, we have
 \begin{align*}
        \norm{ \Delta_k(\delta, y, y^\delta) }
        &
        = \norm{ s^\delta_k \vop_\X  \PP_{\ibs(k)} \aop^*(\aop (x_k^\delta)-\ndata)
        -
        s_k \vop_\X  \PP_{\ibs(k)} \aop^*(\aop(x_k)-y))} \,.
\end{align*}
In the second case, $d^\delta_k =0$, we have
$\norm{ \QQ^{\Y}_{\ibs(k)} (\ndata-\aop( x_{k}^\delta)) } < \tau \delta$.
Consequently,
    \begin{align*}
    \|\Delta_k&(\delta, y, y^\delta)\| \\
    &=
    \norm{ s_k \vop_\X  \PP_{\ibs(k)} \aop^*(\aop(x_k)-y))} \\
    &=
    \norm{ s_k \vop_\X  \PP_{\ibs(k)} \vop_\X^* \kop_\nd^*(\aop(x_k)-y))} \\
    &=
    \norm{v_\ib}^2 \norm{  \QQ^{\Y}_{\ibs(k)} \kop_\nd^*(\aop(x_k)-y))} \\
    &\leq
    \norm{v_\ib}^2
    \norm{\kop_{\nd}} \,
    \Bigl(
    \norm{ \QQ^{\Y}_{\ibs(k)} (\aop (x_k^\delta)-\aop(x_k))}
    +\norm{ \QQ^{\Y}_{\ibs(k)} (\aop (x_k^\delta)- \ndata )}
    \\ & \hspace{0.2\textwidth} + \norm{ \QQ^{\Y}_{\ibs(k)} (\ndata- y )}
    \Bigr)
     \\
    &\leq
    \norm{v_\ib}^2 \norm{\kop_{\nd}} \,
    \Bigl(
    \norm{  \QQ^{\Y}_{\ibs(k)} (\aop (x_k^\delta)-\aop(x_k))}
    +(\snorm{\QQ^{\Y}_{\ibs(k)}} + \tau) \delta
    \Bigr)
    \,.
    \end{align*}
Now \eqref{eq:cont} follows from  the continuity of
$\aop$, and the induction hypothesis implying $x_k^\delta \to x_k$.
\end{proof}

\begin{theorem}[Convergence of the loping BCD for noisy data] \label{th:noisy}
Suppose the step selection $(s^\delta_k)_{k \in\N}$ is continuous at $\delta =0$.
Let  $(\delta_j)_{j\in \N} \in (0,\infty)^\N$ converge to  $0$ and let
$(y_j)\in \YY^\N$ be a sequence of noisy data with
$\snorm{\QQ_{\ib}^\Y( y_j - y) } \leq \delta_j$.
Let $(x_{j,k})_{k \in  \N}$ be defined by the loping BCD iteration with data $y_j$
and stopped at $k_j := k_*(\delta_j, y_j) $ according to 
\eqref{eq:def-discrep}.
Then $(x_{j,k_j})_{j \in \N} \to x^\plus$, where $x^\plus$ is the solution  of $\aop(x)=y$
with minimal distance to $x_0$.
\end{theorem}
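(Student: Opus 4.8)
The plan is to combine the exact‑data convergence (Theorem~\ref{thm:exact}), the finiteness of the stopping index together with the discrepancy‑type bound (Proposition~\ref{prop:st-f}), the continuity of the iterates at $\delta=0$ (Lemma~\ref{lem:cont}), and the $\vop_\X$‑norm monotonicity (Lemma~\ref{lem:mon}), in the now‑classical way used for Landweber--Kaczmarz methods. Since the candidate limit $x^\plus$ is independent of $j$, it suffices to show that every subsequence of $(\delta_j)_{j\in\N}$ possesses a further subsequence along which $x_{j,k_j}\to x^\plus$. So I fix an arbitrary subsequence and, passing to a sub‑subsequence, distinguish two cases: either (a) the stopping indices $(k_j)$ are bounded, in which case after a further subsequence $k_j\equiv k$ is constant, or (b) $k_j\to\infty$.

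In case (a), $k$ is fixed, so Lemma~\ref{lem:cont} gives $x_{j,k}\to x_k$ as $j\to\infty$, where $x_k$ denotes the exact‑data iterate. By Proposition~\ref{prop:st-f}, $\norm{\QQ^\Y_\ib(y_j-\aop(x_{j,k}))}<\tau\delta_j$ for every block $\ib$; letting $j\to\infty$ and using $\norm{\QQ^\Y_\ib(y_j-y)}\le\delta_j\to 0$ together with continuity of $\aop$, I obtain $\QQ^\Y_\ib(y-\aop(x_k))=0$ for all $\ib=1,\dots,\nb$. Now $w:=y-\aop(x_k)=\aop(x^\plus-x_k)$ lies in $\ran(\aop)\subseteq\ran(\vop_\Y)=\ran(\Vo)\otimes\Y$, and since $\Vo$ has rank $\nb$ (its columns $v_1,\dots,v_\nb$ are linearly independent, so their Gram matrix is invertible), the vanishing of all the projections $\QQ^\Y_\ib w$ forces $w=0$; hence $\aop(x_k)=y$. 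Then the exact BCD iteration is stationary from step $k$ onward (each further update subtracts $\aop^*(\aop(x_k)-y)=0$), so $x_m=x_k$ for all $m\ge k$, and Theorem~\ref{thm:exact} gives $x^\plus=\lim_m x_m=x_k$. Therefore $x_{j,k_j}=x_{j,k}\to x^\plus$.

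Case (b) is the main case. Given $\eps>0$, I first use Theorem~\ref{thm:exact} to pick an index $N$ with $\norm{\vop_\X x_N-\vop_\X x^\plus}<\eps$. Next I note that, along the loping iteration with data $y_j$, the sequence $k\mapsto\norm{\vop_\X x_{j,k}-\vop_\X x^\plus}$ is non‑increasing: whenever $d^{\delta_j}_k=0$ the iterate is unchanged, and whenever $d^{\delta_j}_k=1$ we have $\res^{\delta_j}_k\ge\tau\delta_j>\delta_j$ (because $\tau>1$) and the step‑size condition \ref{sum1} is exactly the bound \eqref{eq:stepsize}, so Lemma~\ref{lem:mon} applies with $x^*=x^\plus$. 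Hence, for all $j$ large enough that $k_j\ge N$, $\norm{\vop_\X x_{j,k_j}-\vop_\X x^\plus}\le\norm{\vop_\X x_{j,N}-\vop_\X x^\plus}$, and by Lemma~\ref{lem:cont} the right‑hand side converges to $\norm{\vop_\X x_N-\vop_\X x^\plus}<\eps$ as $j\to\infty$. Thus $\limsup_j\norm{\vop_\X x_{j,k_j}-\vop_\X x^\plus}\le\eps$, and since $\eps$ was arbitrary, $\vop_\X x_{j,k_j}\to\vop_\X x^\plus$. Finally, because $\operatorname{rank}(\Vo)=\nb$, the operator $\vop_\X=\Vo\otimes\Id_\X$ is bounded below, so $x_{j,k_j}\to x^\plus$. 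Combining (a) and (b) proves the claim.

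I expect the delicate points to be two. First, one must check carefully that the $\vop_\X$‑norm monotonicity of Lemma~\ref{lem:mon} is genuinely available at every non‑loped step up to the data‑dependent stopping index $k_j$ — this is precisely where $\tau>1$ and the step‑size assumptions \ref{sum1}--\ref{sum3} are used. Second, the rank‑$\nb$ assumption on $\Vo$ is invoked in two places that are easy to overlook: in case (a) it turns ``all the $\QQ^\Y_\ib$‑components of the residual vanish'' into ``the residual vanishes'', and in case (b) it is what lets one pass from convergence of $\vop_\X x_{j,k_j}$ back to convergence of $x_{j,k_j}$ itself. The bookkeeping with the subsequence reduction (needed because $(k_j)$ may neither stay bounded nor tend to infinity) is routine but should be stated cleanly at the outset.
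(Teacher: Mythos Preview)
Your proof is correct and follows essentially the same route as the paper's own argument: a case split according to whether the stopping indices stay bounded or diverge, invoking Proposition~\ref{prop:st-f} and Lemma~\ref{lem:cont} in the bounded case, and the $\vop_\X$--monotonicity from Lemma~\ref{lem:mon} together with Theorem~\ref{thm:exact} in the unbounded case. Your write-up is in fact slightly more careful than the paper's in two places --- you make the initial subsequence reduction explicit (the paper's dichotomy ``$k_j$ has a finite accumulation point'' versus ``$k_j\to\infty$'' tacitly presupposes this), and you spell out why the vanishing of all $\QQ^\Y_\ib$--residuals forces the full residual to vanish via the rank-$\nb$ Gram matrix, which the paper simply asserts.
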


\begin{proof}
From  Lemma
\ref{lem:cont} and the continuity of $\aop$ we have, for any
$k \in \N$,  that $x_{j,k} \to x_{k}$ and $\aop  ( x_{j,k} ) \to  \aop (x_{k})$
as $j \to \infty $.

To show that $x_{j,k_j} \to x^\plus$, we first assume that
$k_j$ has a finite accumulation point $k_*$. Without loss of
generality  we may assume that $k_j = k_*$ for all $j \in \N$. From
Proposition \ref{prop:st-f} we know that $\snorm{\QQ^{\Y}_{\ib} ( y_j -
\aop (x_{j_,k_*}))} < \tau \delta_j$. By taking
the limit $j \to \infty$, we obtain  $ y = \aop (x_{k_*}) $.
Consequently, $x_{k_*} = x^\plus$ and $x_{j_,k_*} \to x^{*}$ as
$j \to \infty$.
It remains to consider the case where $k_j \to \infty$ as $j \to
\infty$. To that end let $\eps >0$. Without loss of
generality we assume that $k_j$ is monotonically increasing.
According to Theorem \ref{thm:exact}  we can choose $n \in \N$ such
that $\norm{\vop_\X x_{k_n}  -  \vop_\X  x^\plus} < \eps/2$. Equation
\eqref{eq:cont} implies that there exists $j_0  >  n$ such that
$\norm{\vop_\X  x_{j,k_n} - \vop_\X  x_{k_n}} < \eps/2$ for all $j \geq j_0$.
Together with the monotonicity we obtain
\begin{multline*}
    \norm{\vop_\X  x_{j,k_j}  - \vop_\X  x^\plus}
    \leq
    \norm{\vop_\X  x_{j,k_n}  - \vop_\X  x^\plus}
    \\
    \leq \norm{\vop_\X  x_{j,k_n}  - \vop_\X  x_{k_n}}
    +
    \norm{\vop_\X x_{k_n}  - \vop_\X  x^\plus}
    < \frac{\eps}{2} + \frac{\eps}{2} = \eps
     \quad \text{ for } j \geq j_0 \,.
\end{multline*}
Because $\vop_\Y $ is non-singular, this shows $x_{j,k_j} \to x^\plus$ as $j \to \infty$.
\end{proof}

{\rot

\section{Example: System of  linear integral equation}
\label{sec:int}

In this section we   compare the BCD method to the standard Landweber method for
 an elementary system of linear integral equations.

\begin{figure}[htb!]\centering
\includegraphics[width=0.49\textwidth]{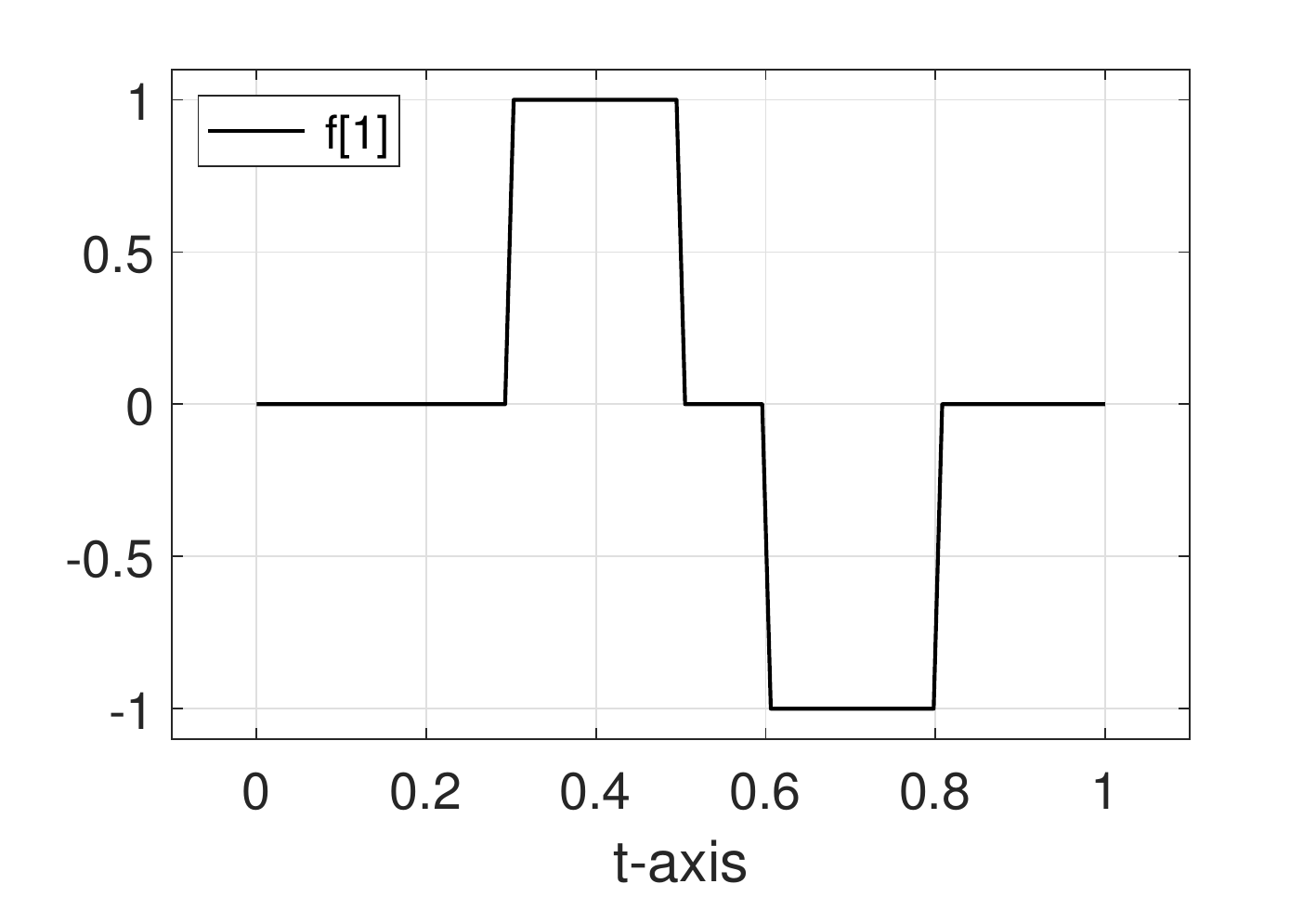}
\includegraphics[width=0.49\textwidth]{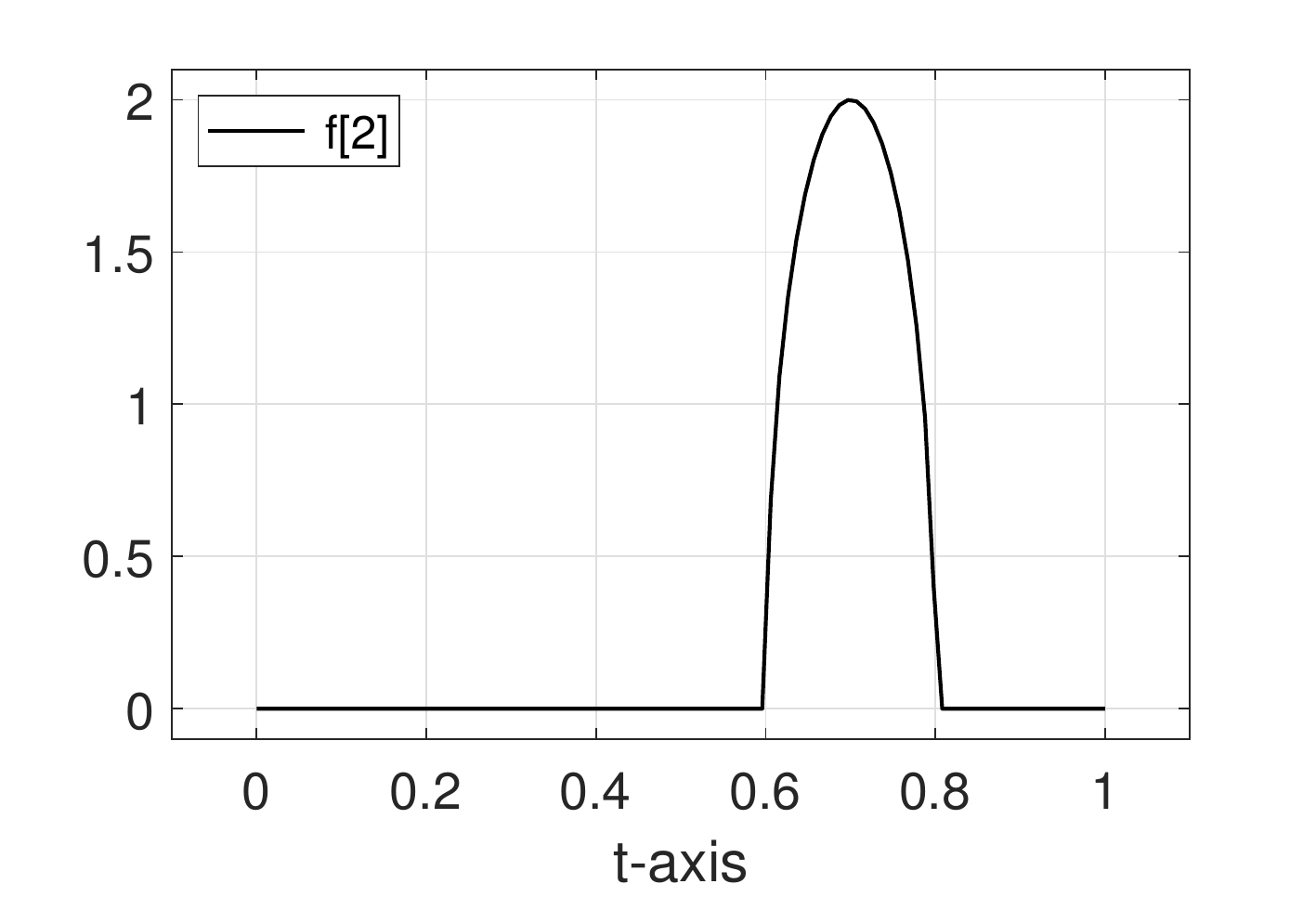}\\
\includegraphics[width=0.49\textwidth]{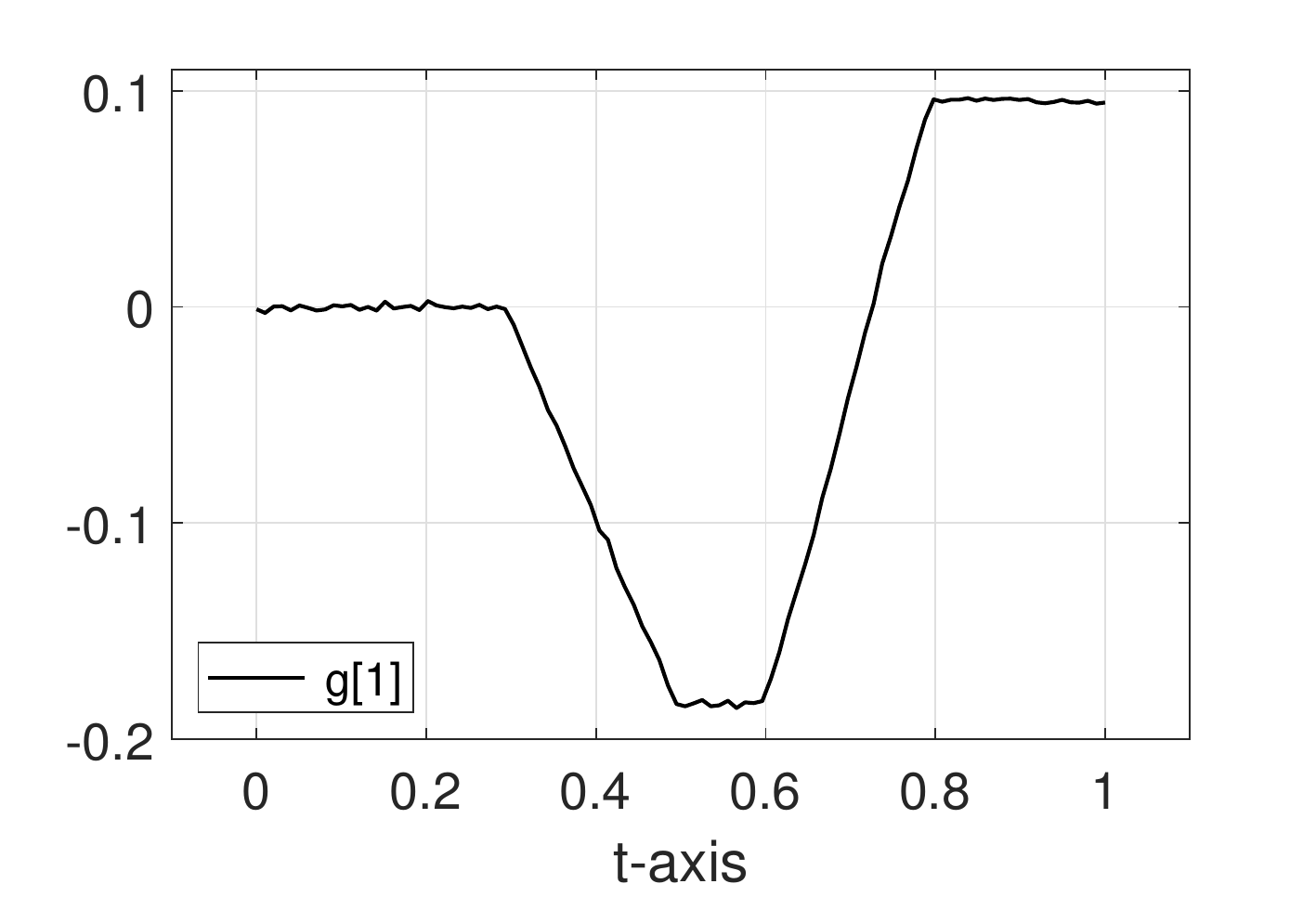}
\includegraphics[width=0.49\textwidth]{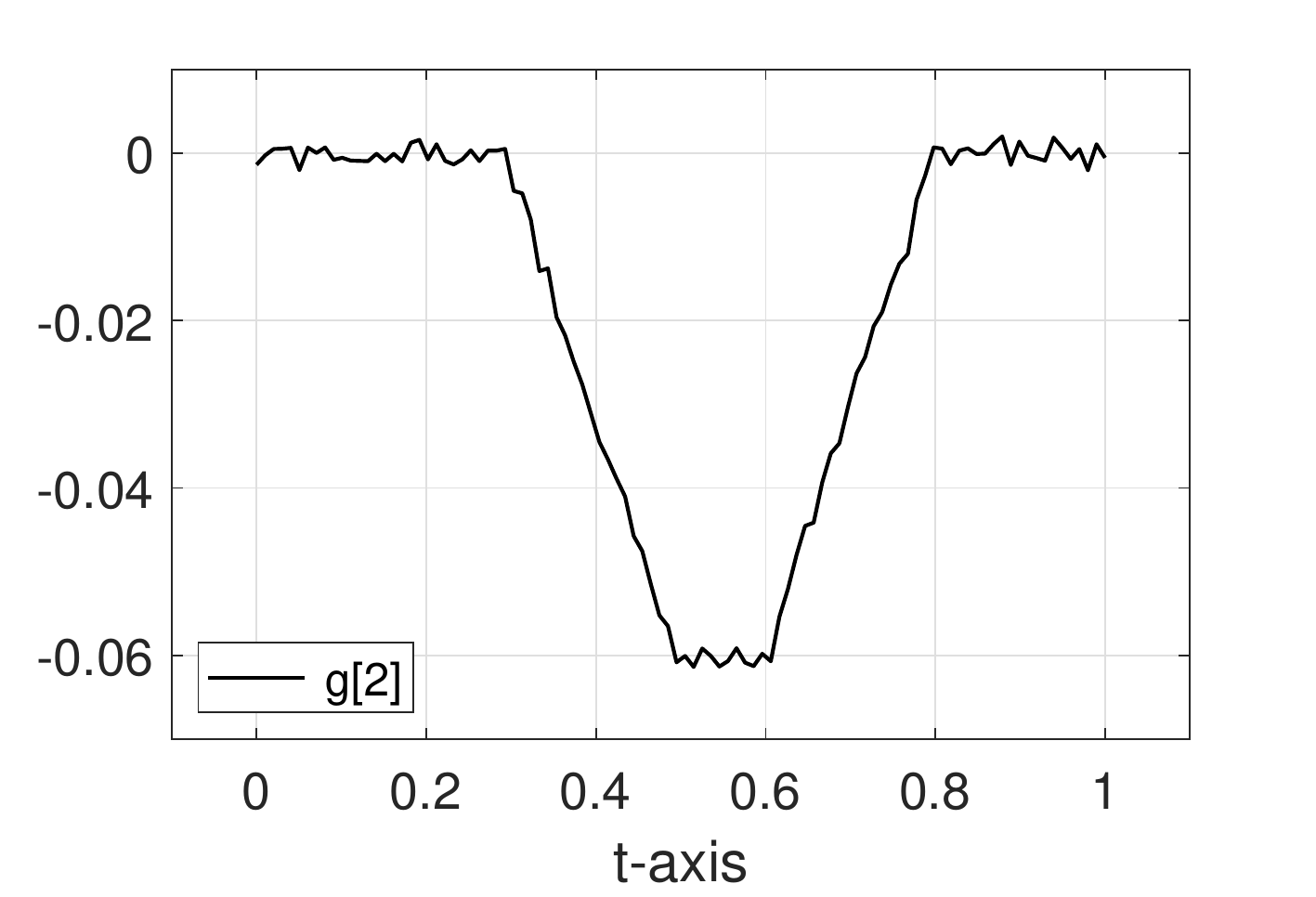}
\caption{\rot \textbf{Test phantoms and noisy data for a system of two integral equations.}
Top: The two components  $f^*[1]$ (left) and $f^*[2]$ (right) of the  true  unknown.
Bottom: The two components $g^\delta[1]$ (left) and $g^\delta[2]$ (right) of
the computed noisy data.}
\label{fig:I-data}
\end{figure}

 \subsection{Forward problem}

 Consider the integration operator
 $\Ko \colon L^2([0,1]) \to L^2([0,1])$ defined by
\begin{equation} \label{eq:intop}
\Ko(f) \colon [0,1] \to \R \colon s \mapsto \int_0^s f(t) \rmd t \,.
\end{equation}
According to the Cauchy-Schwarz inequality, we have
\begin{equation}
	 \norm{\Ko(f)}^2 = \int_0^1 \kl{\int_0^s   f(t) \rmd t }^2  \rmd s
	 \leq \int_0^1 s \int_0^1 f(t)^2 \rmd t \rmd s = \frac{1}{2} \norm{f}^2
\end{equation}
for all $ f \in L^2([0,1])$, which shows that $\Ko$ is a well-defined linear bounded operator.
Using the operator $\Ko$ we consider the following  forward model applied to
a vector  of functions $(f[\ib])_{\ib=1}^\nb  \in \skl{ L^2([0,1]) }^\nb$.

\begin{figure}[hbt!]\centering
\includegraphics[width=0.49\textwidth]{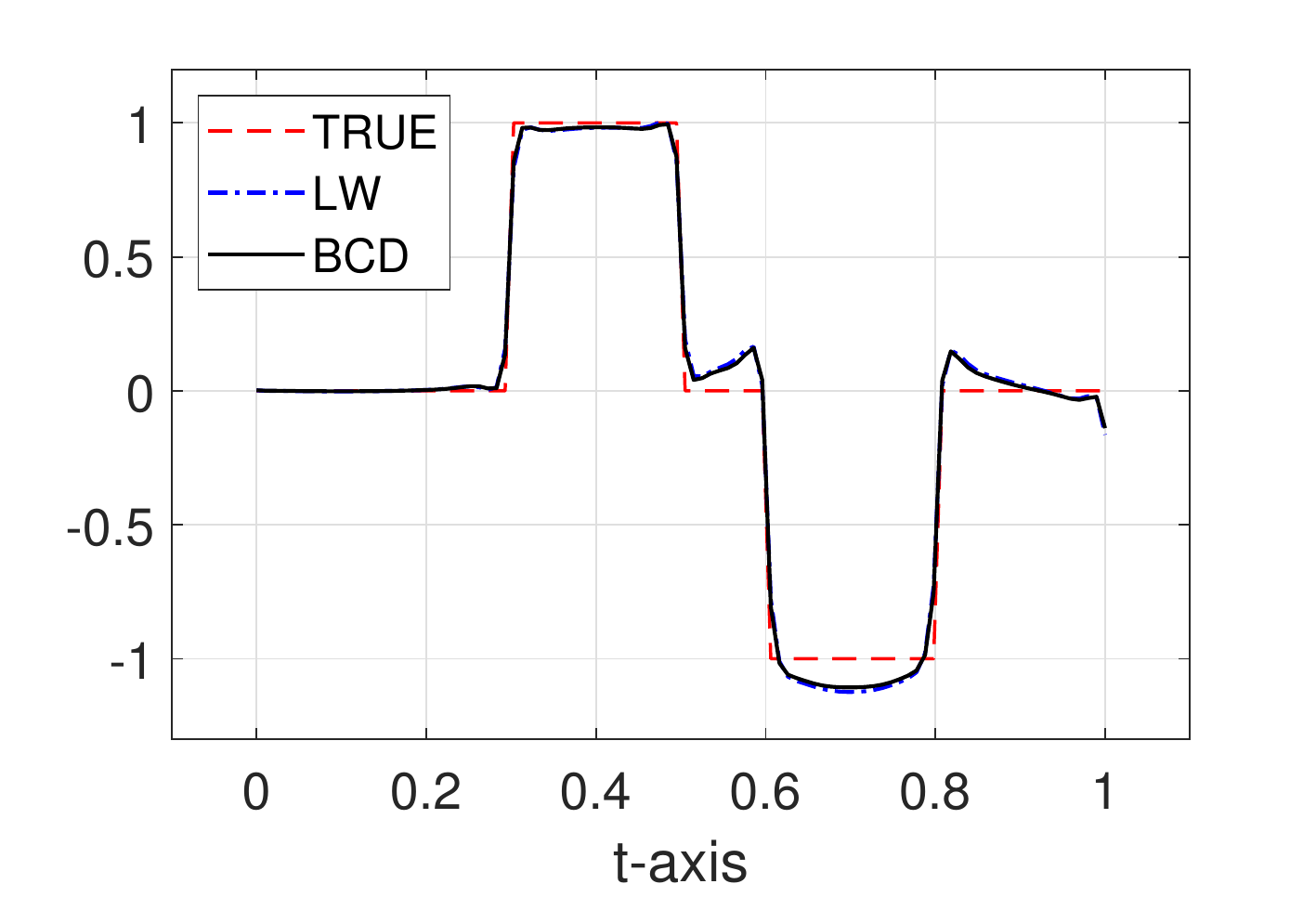}
\includegraphics[width=0.49\textwidth]{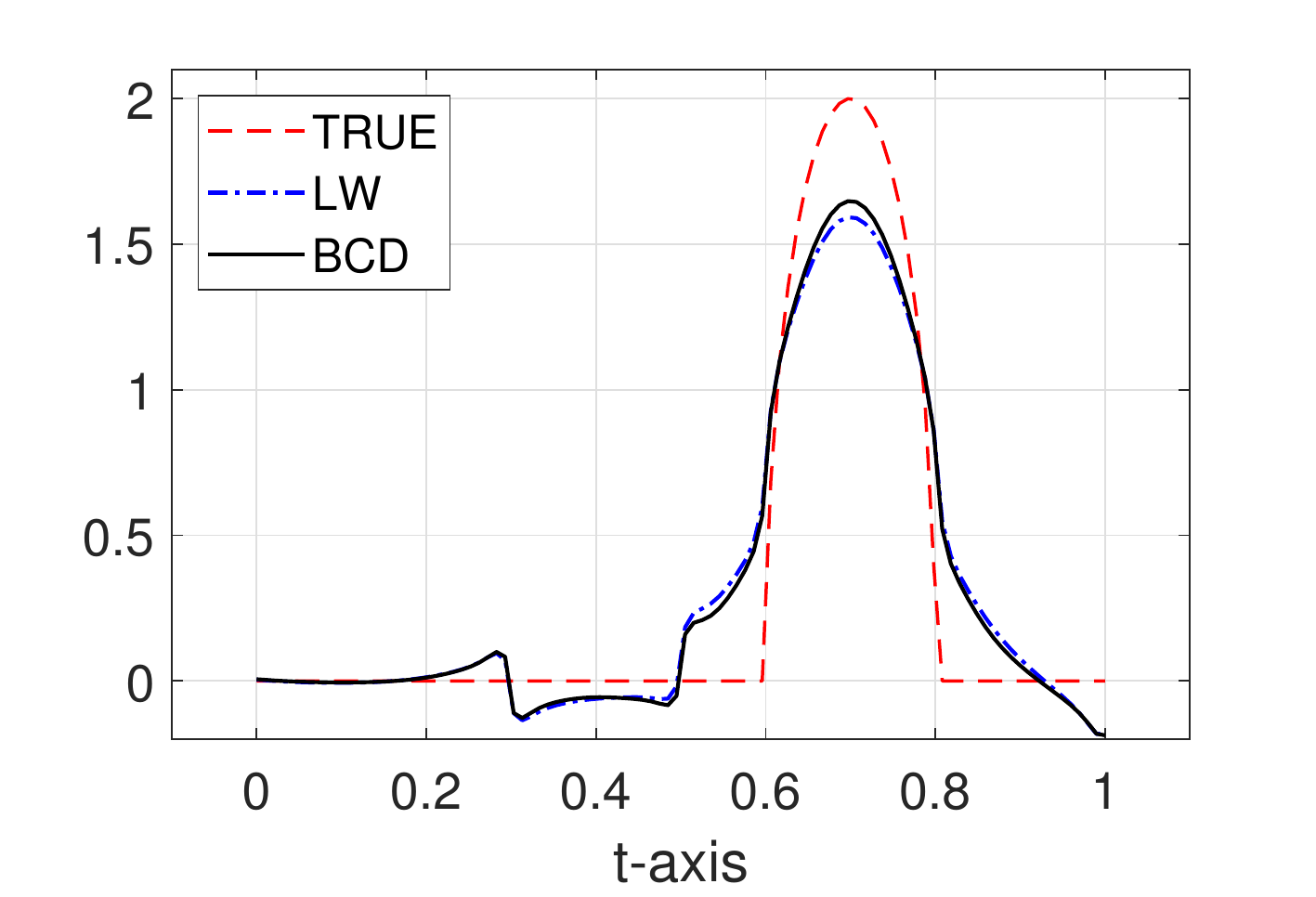}\\
\includegraphics[width=0.49\textwidth]{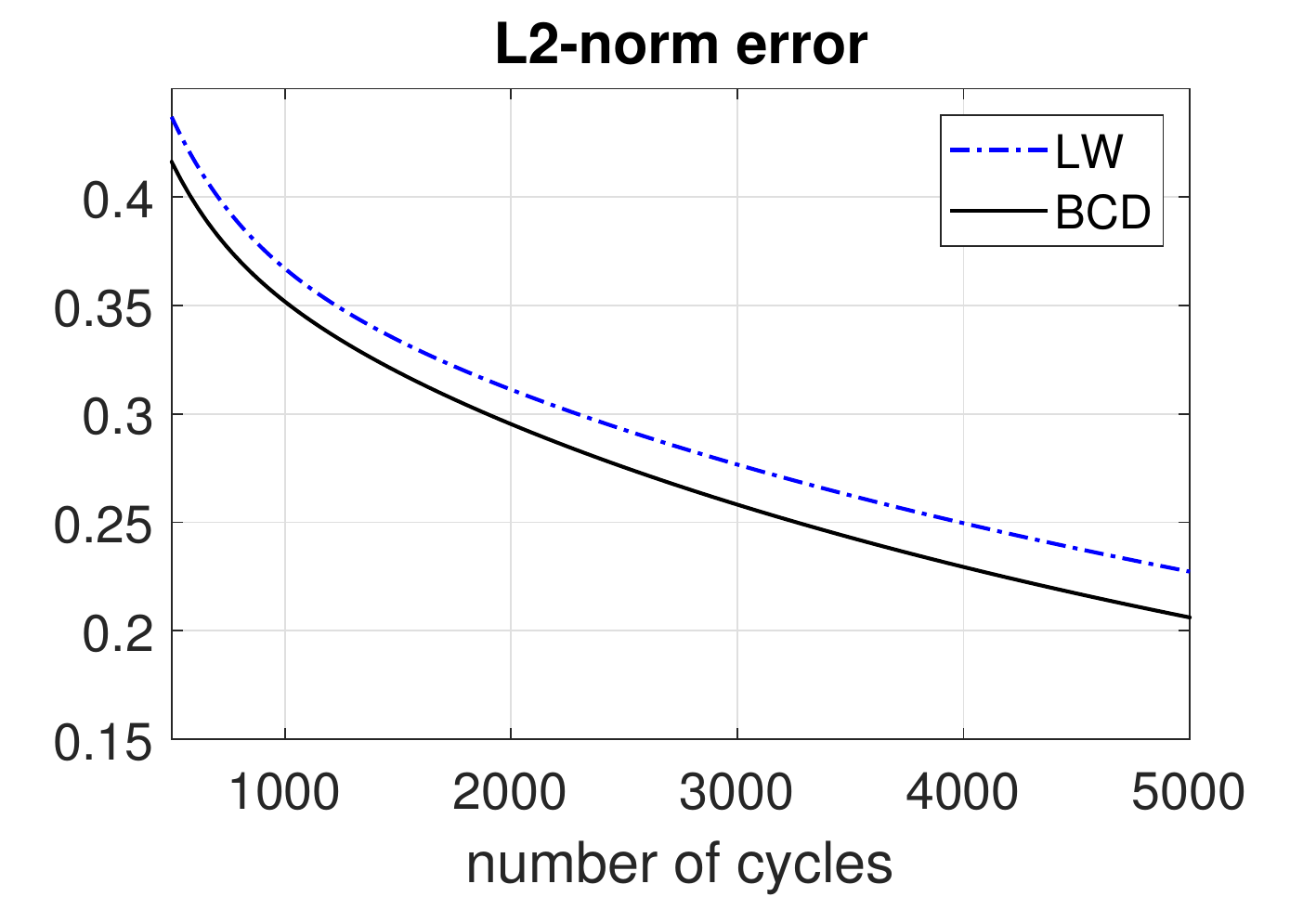}
\includegraphics[width=0.49\textwidth]{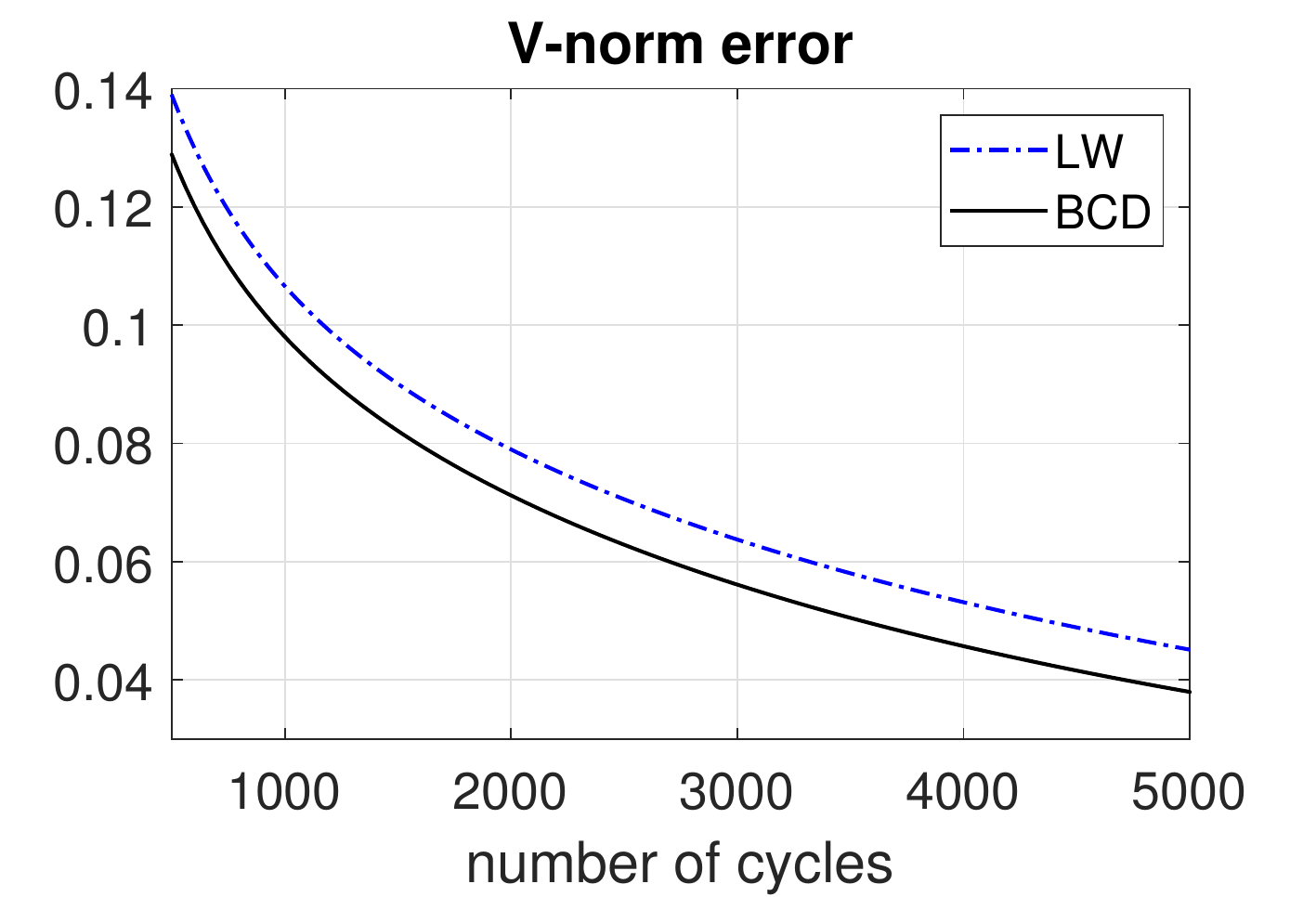}
\caption{\rot \textbf{Reconstruction from exact data using the Landweber (LW) and the BCD method.} Top: Reconstruction after 5000 cycles. Bottom: Reconstruction errors in the
standard $2$-norm (left) and the $\Vo$-norm (right) as a function of the iteration number.
For both error measures the  reconstruction error of BCD  is  smaller than the
one of the Landweber method.}
\label{fig:I-exact}
\end{figure}

\begin{definition}
For $\nd \geq \nb \geq 1$ and given matrix $V =(v_{d,b})_{d,b}\in \R^{D \times B}$
of  rank $\nb$, we define the forward operator
\begin{equation}
\aop \colon \skl{ L^2([0,1]) }^\nb \to \skl{ L^2([0,1])}^\nd  \colon f \mapsto \kl{ \sum_{b=1}^B  v_{b,c} \Ko(f[\ib])}_{\id=1}^\nd \,.
\end{equation}
\end{definition}

According to our general notion we have  $\aop  = V \otimes \Ko$ and the theory
presented in the previous section can be applied for solving the inverse
problem $\aop(f) = g$. Note that this equation clearly is  ill-posed because the
range of $\Ko$  is non-closed (and equal to the Sobolev space
$H^1_{\diamond}([0,1]) \coloneqq \set{g \in L^2([0,1]) \mid g' \in L^2([0,1]) \wedge g(0)=0}$
of  all weakly differentiable functions vanishing at $0$.)

More generally, one could replace the integration operator by  any  bounded
(integral) operator $\Ko \colon L^2([0,1]) \to L^2([0,1])$ with non-closed range.


\begin{figure}[hbt!]\centering
\includegraphics[width=0.49\textwidth]{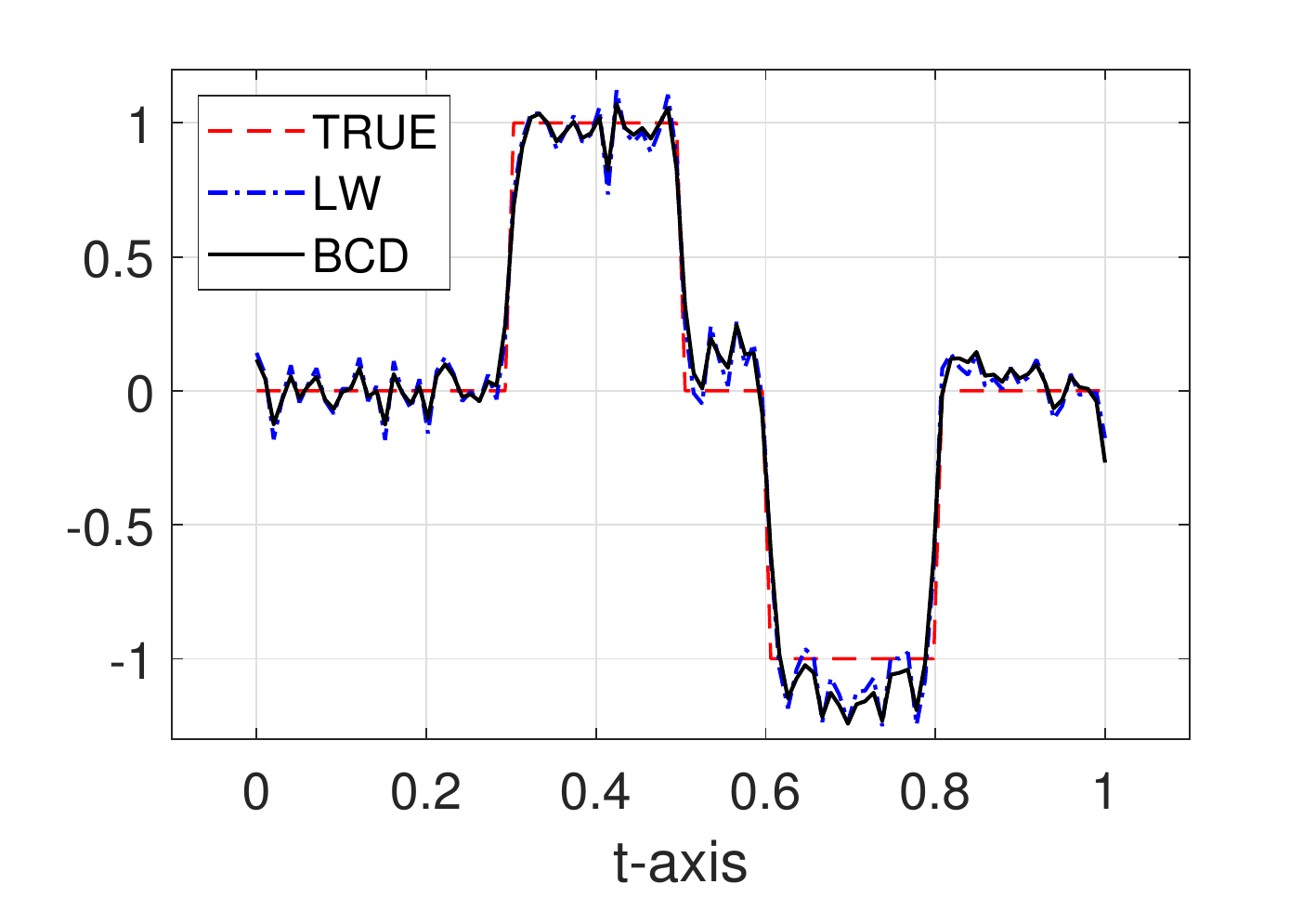}
\includegraphics[width=0.49\textwidth]{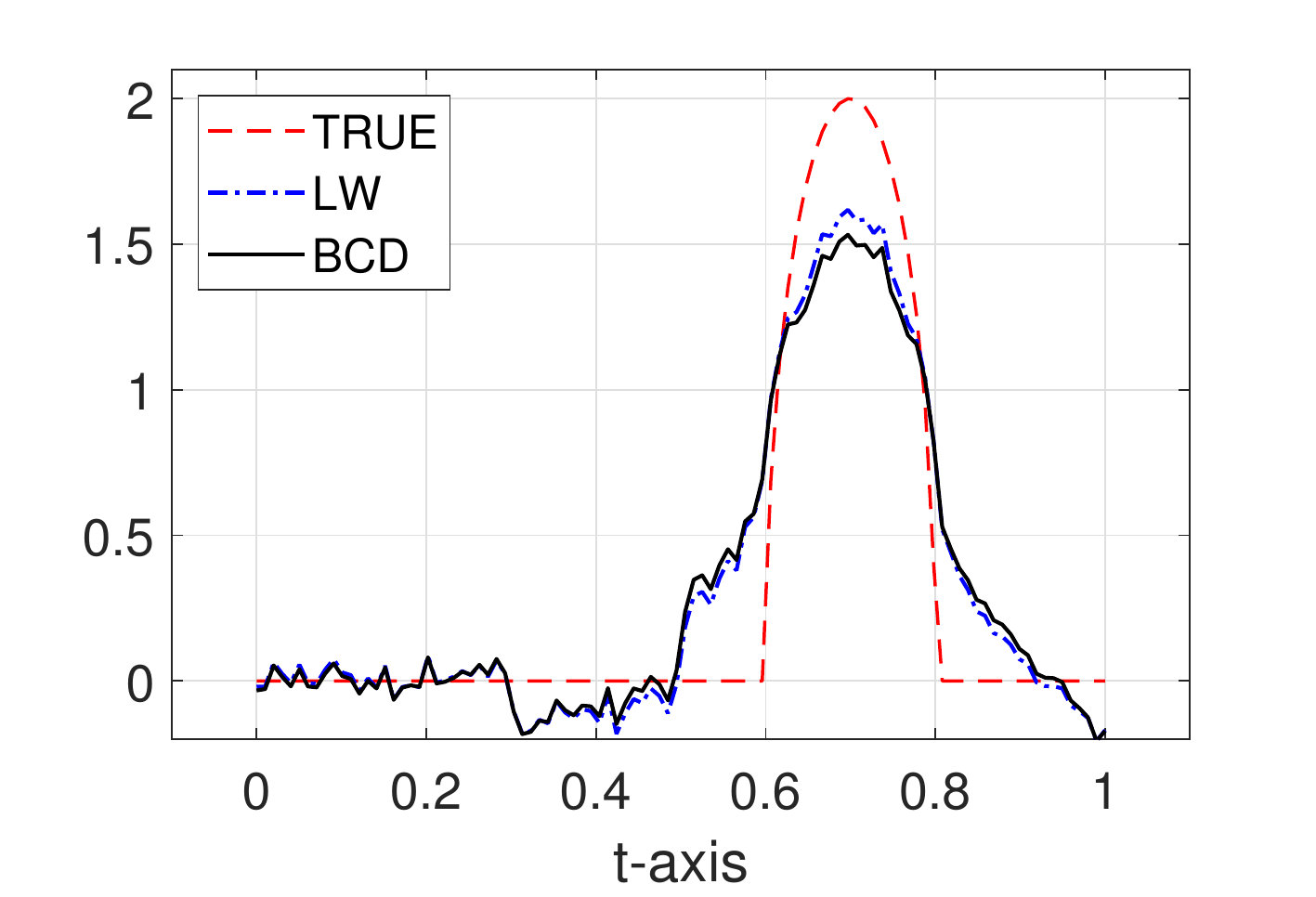}\\
\includegraphics[width=0.49\textwidth]{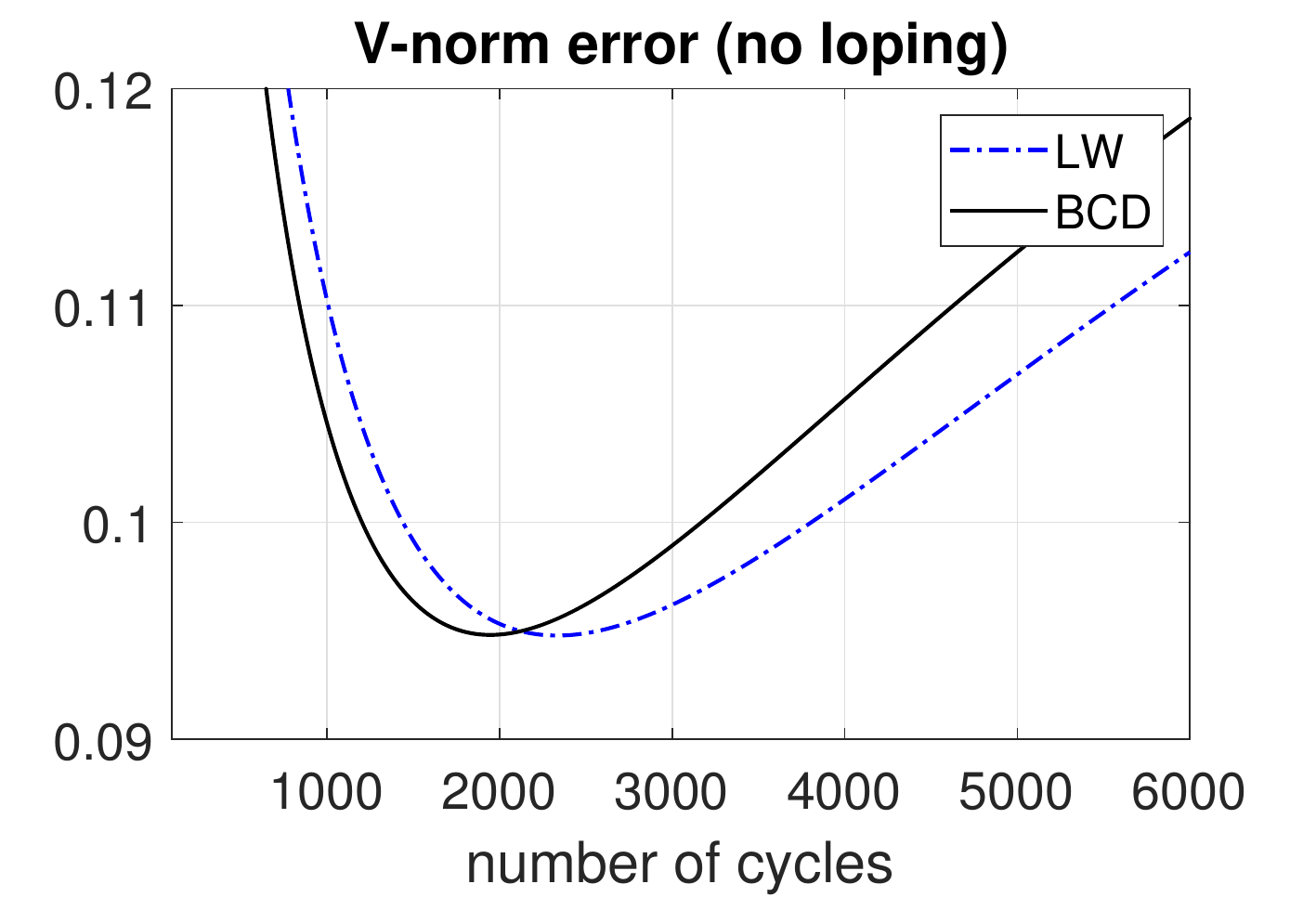}
\includegraphics[width=0.49\textwidth]{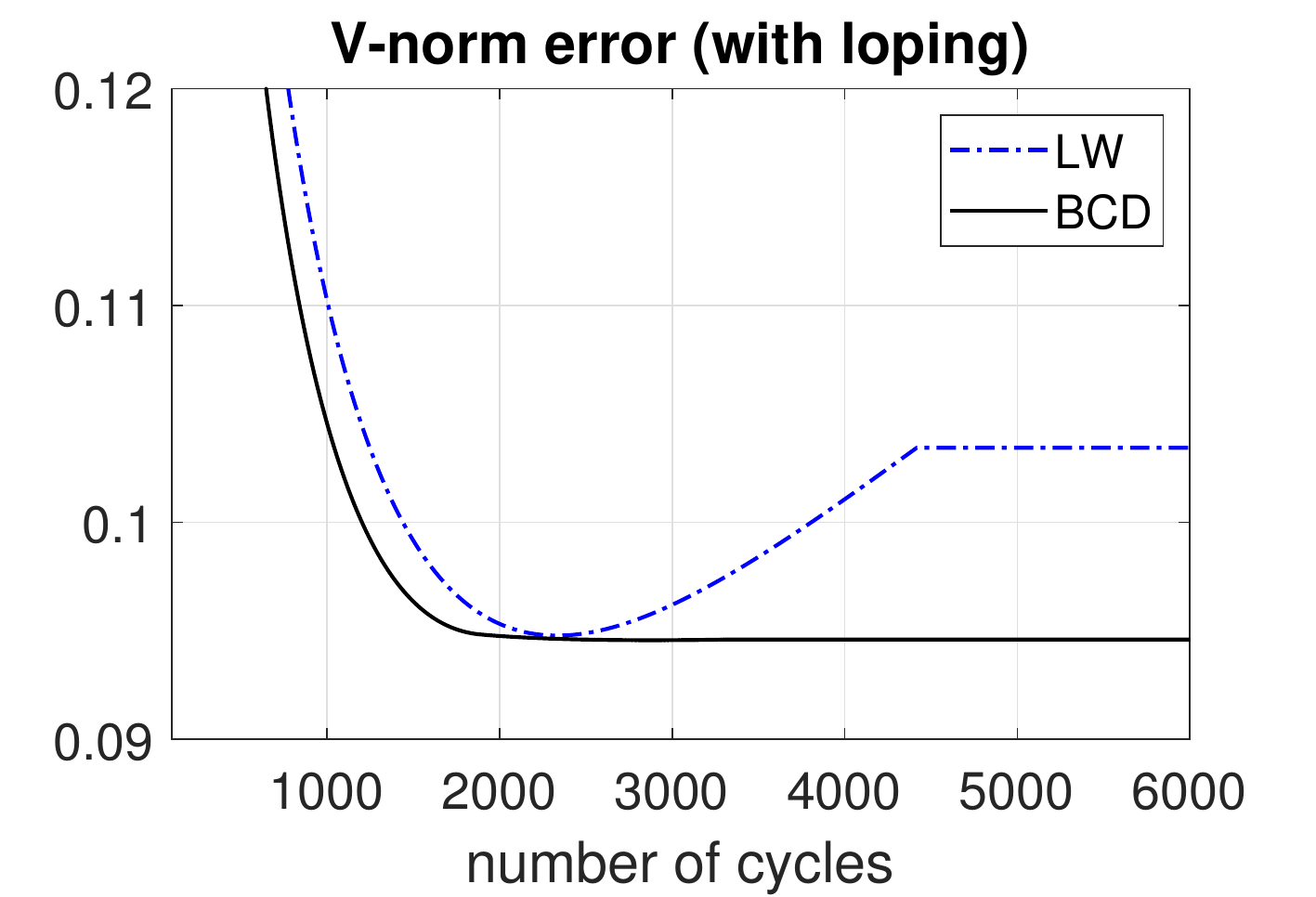}\\
\includegraphics[width=0.49\textwidth]{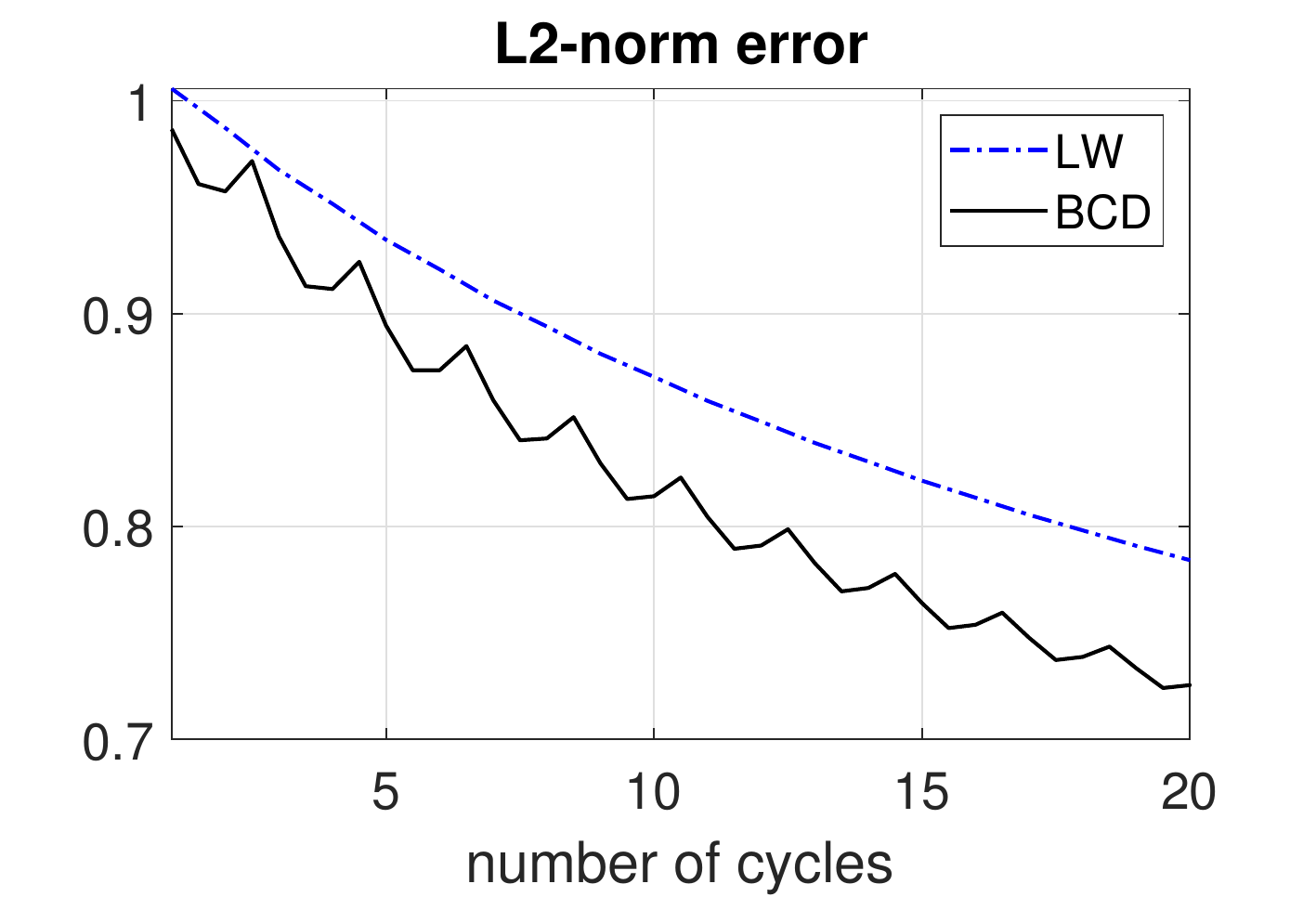}
\includegraphics[width=0.49\textwidth]{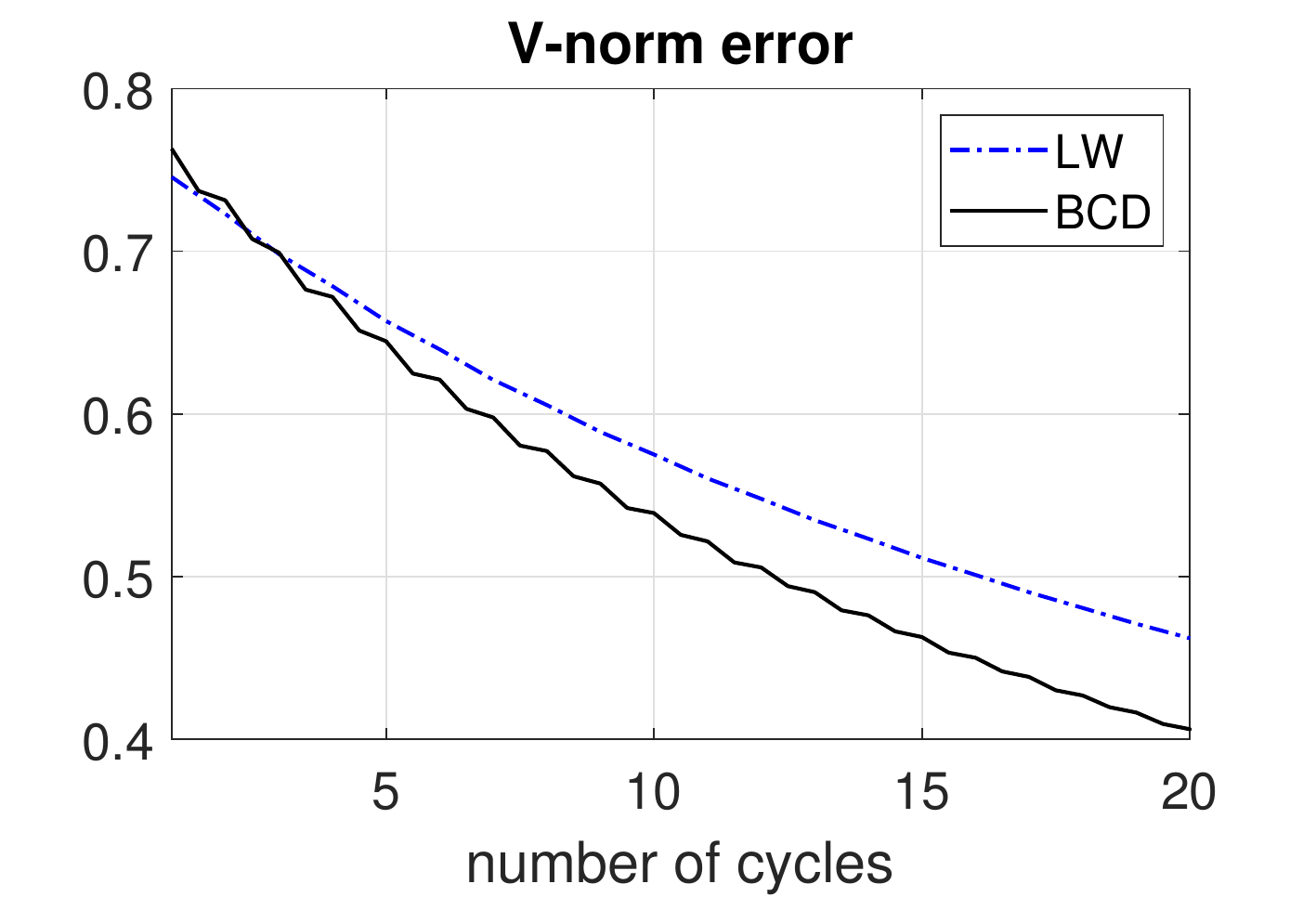}
\caption{\rot \textbf{Reconstructions from noisy data using the Landweber (LW) and the BCD method.} Top: Reconstruction using the BCD iteration (with the loping principle and the proposed stopping  rule) and the Landweber method using the  discrepancy principle as stopping rule. Middle: Reconstruction errors in the  $\Vo$-norm without (left) and
with (right) loping.  Bottom: reconstruction error for the first iterates
in the $2$-norm (not monotonically decreasing) and in the $\Vo$-norm (right).}
\label{fig:I-noisy}
\end{figure}

\subsection{Reconstruction results}

For all  presented numerical results we  use $\nb=\nd=2$
and   take $\Vo = \tilde \Vo /{\snorm{\tilde \Vo}}_{2,2}$ with
\begin{equation}
\tilde \Vo \coloneqq \begin{bmatrix} -3 &    1 \\  -1  &  0 \end{bmatrix} \,.
\end{equation}
We discretize $\Ko$ with the composite trapezoidal rule using $p = 100$ intervals
such that the data and the unknowns are elements in $(\R^p)^2 $.
The true unknown $f^* =(f^*[1], f^*[2])$ and the  noisy data $g^\delta  =(g^\delta[1], g^\delta[2])  $  are shown  in Figure \ref{fig:I-data}.   The exact data $g = \aop f^*$ have been computed via numerical integration followed by application of $\Vo$. Subsequently we computed noisy  data by adding random  white noise to  $\data$ with a standard deviation of $0.001$. The resulting relative data errors are  $\norm{g-g^\delta} / \norm{ g  } \simeq 0.015$,
$\norm{\QQ_1(g-g^\delta)} / \norm{\QQ_1 g } \simeq 0.011$ and
$\norm{\QQ_2(g-g^\delta)} / \norm{\QQ_2 g } \simeq 0.012$,
respectively.

Reconstruction using the BCD and Landweber methods   from simulated data
are shown  in Figure \ref{fig:I-exact}.  For each  case we have used the maximum constant step-size,
that lead to  stable reconstruction. We evaluate the reconstruction error (norm of $f_k -  f^*$) in terms of the
standard  2-norm  $\norm{\edot}_2$  and in  the  $\Vo$-norm $\norm{\edot}_{\Vo}$,
\begin{align}
	\norm{f}_2^2
	&\coloneqq
	\norm{f[1]}^2 + \norm{f[2]}^2\\
	\norm{f}_{\Vo}^2
	&\coloneqq
	\norm{v_{1,1} f[1] + v_{1,2} f[2]}^2 +\norm{v_{2,1} f[1] + v_{2,2} f[2]}^2 \,,
\end{align}
respectively.
As we can see from the bottom row in Figure \ref{fig:I-exact}, measured in both norms,
the reconstruction error  of the BCD   is smaller  than the error of
Landweber iteration.

Figure \ref{fig:I-noisy} shows  reconstruction results for nosy data. Again, the error in the BCD
method decreases faster than the one of the Landweber method. The BCD  therefore  requires less cycles than the Landweber  method. Moreover, in the middle column of
Figure~\ref{fig:I-noisy}   we illustrate the need for the loping (or another  regularization  strategy). Without loping, the BCD iteration as well as the Landweber
iteration start to diverge after around 2000 iterations. With loping (for the BCD method) and the with the discrepancy principle (for the Landweber method)   both iterations stop. (Note that here we only show the error in the $\Vo$-norm and that  the Landweber  method is monotonically decreasing in the $2$-norm when using the discrepancy principle.)
Finally, the bottom row in Figure~\ref{fig:I-noisy}  shows that the reconstruction
error for the BCD iteration is not monotonically decreasing in the standard norm,
whereas in the $\Vo$-norm it is.

%


}

\section{\rot A nonlinear test: Multi-spectral X-ray tomography}
\label{sec:xray}

In this section we apply {\rot a nonlinear generalization of the} BCD and the Landweber iteration to one-step inversion in  multi-spectral X-ray tomography. {\rot In particular, for nonlinear operators  $\aop$ in place of linear ones,  we use the following generalization of the BCD iteration
\begin{equation} \label{eq:iterX}
    x_{k+1}^\delta
    \coloneqq  x_k^\delta - s^\delta_k \PP_{\ibs(k)} \aop'(x_k^\delta)^*(\aop (x_k^\delta)-y^\delta)\,.
\end{equation}
Note that such problems are  not covered be our theoretical analysis.
We consider  extending   our  theory to this  class of examples a particularly
interesting topic of future research.}

In the following we denote by $D_R \subseteq \R^2$
the disc with radius $R < 1$ centered at the origin.
We define the fan beam Radon transform  $\Xo \mu \colon \sph^1 \times \sph^1  \to \R$
of a function $\mu \colon \R^2 \to \R$  supported in $D_R$ by
\begin{equation} \label{eq:radon}
    (\Xo\mu)(\alpha,\beta) \coloneqq  \int_{\R}\mu( \alpha + t\beta )\rmd t\,.
\end{equation}
It can be easily verified that the fan beam Radon transform
$\Xo \colon L^2(D_R) \to L^2(\sph^1  \times \sph^1)$
is linear and continuous \cite{natterer2001mathematics}.

\subsection{Mathematical modeling}

We assume that the tissue is composed of $\nb$ different materials
each of them having a different energy dependent X-ray  attenuation coefficient
$\mu_\ib (E)$ with $\ib = 1, \dots, \nb$. The combined X-ray attenuation coefficient is then given by
\begin{equation}
    \mu( E, \edot ) = \sum_{\ib = 1}^{\nb}\mu_\ib(E) f[\ib]    \,,
\end{equation}
where $f[\ib]  \colon \R^2 \to [0,1]$ is the fractional density map of the $\ib$th
material. Our goal is to determine the fractional
density maps  $f[\ib] $ from multi-spectral X-ray transmission measurements.

\begin{figure}[htb!]
\centering
\includegraphics[width=0.7\textwidth]{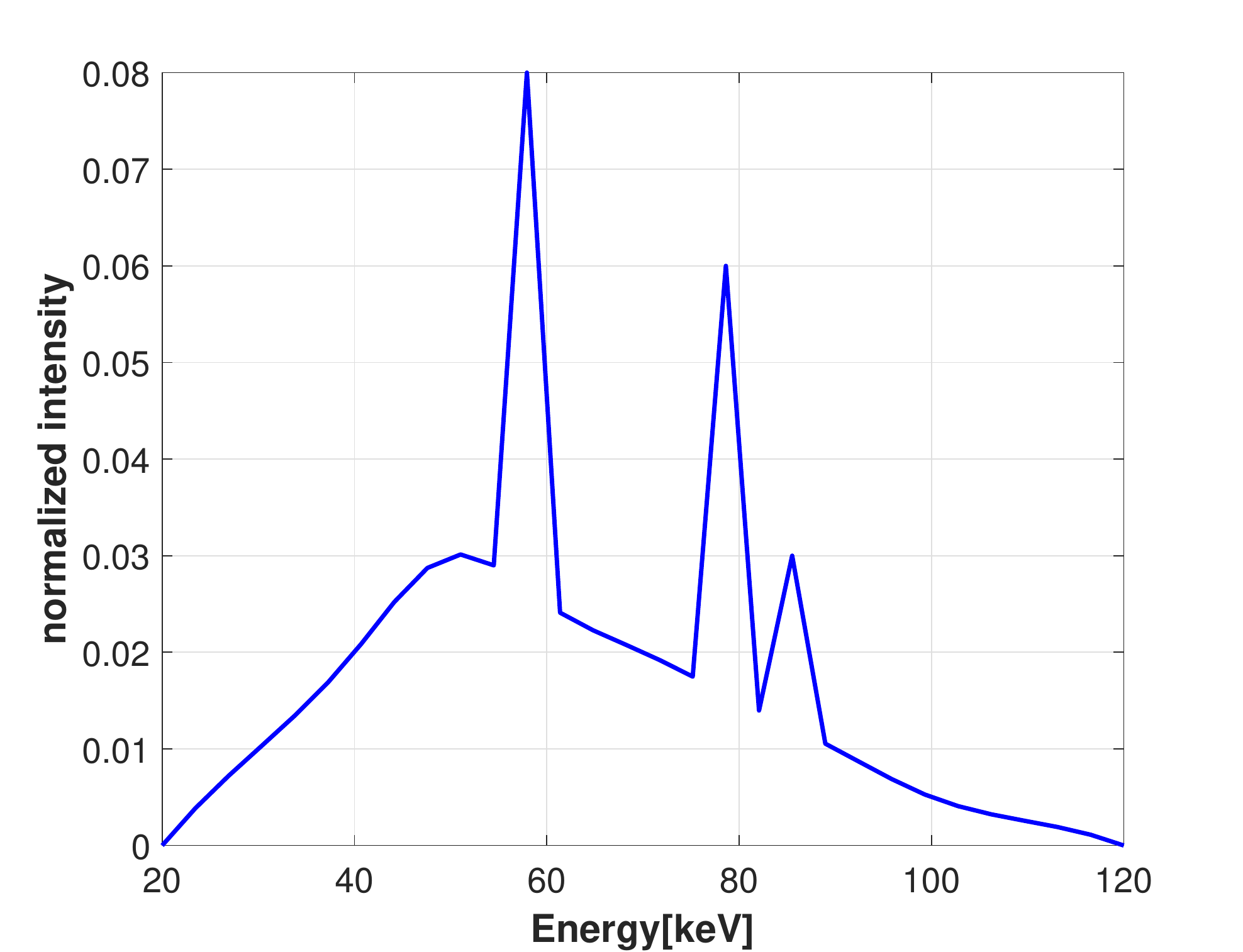}
\caption{Normalized\label{spec}  spectrum of a typical X-ray source \cite{atak2015dual,barber2016algorithm}. This spectral energy
distribution will be considered for our experiments.}
\end{figure}

The energy sensitive X-ray transmission  measurements result in the intensity~\cite{barber2016algorithm}
\begin{equation}\label{xrayint}
    I_W = \int_{W} s(E)\exp\kl{ -\Xo(\mu(E,\edot ))} \rmd E\,.
\end{equation}
Here $W \subseteq [0, \infty)$ denotes the energy window where the measurement is made and
$s \colon [0, \infty) \to \R$ is the product of X-ray beam spectrum
intensity and detector sensitivity.
We assume the detector sensitivity to be
constant and that the spectrum $s $ is known for energies ranging from
$\SI{20}{\keV}$ to $\SI{120}{\keV}$ covering any energy window.
The spectrum used for the numerical results is the same as in \cite{atak2015dual,barber2016algorithm}
and  shown in  Figure~\ref{spec}.

In order to recover multiple material densities,
we use multiple energy windows. We choose the same number
$\nb$ of  spectral windows as we have different materials.
Moreover, to simplify the mathematical formulation we
uniformly discretize the energy variable, $E_0 = \SI{20}{\keV}  < E_1 < \cdots <
 E_N =\SI{120}{\keV}$.
The X-ray measurements corresponding
to the $\ib$th energy  window is given by
\begin{equation}\label{xraysum}
    I[\ib] =
    \sum_{i \in W_\ib}
    s_i \exp(-\Xo( \mu_i ))
    \, \Delta E
    =
    \sum_{i \in W_\ib}
    s_i \exp \kl{ - \Xo \kl{ \sum_{\ib = 1}^{\nb} \mu_{i,\ib} f[\ib] } }  \,  \Delta E  \,.
\end{equation}
Here
$W_\ib \subseteq \set{1, \dots , N}$ model discrete energy windows,
$(s_i)_{i=1}^N$ is the discretized
beam spectrum intensity, and $\Delta E \coloneqq (\SI{120}{\keV})/N$.
Summarizing the above we define the following forward operator.

\begin{definition}[Multi-spectral \label{def:ms} X-ray measurement operator]
The measurement operator $\aop$
with respect to the energy  windows $W_1, \dots,W_\nb$ is given by
    \begin{align} \nonumber
         \aop \colon  (L^2(D_R))^\nb &\to L^2(\sph^1 \times \sph^1)^\nb
        \\ \label{eq:ms}
        f &\mapsto
        \kl{
        \sum_{i \in W_\ib}
        s_i \exp \kl{ - \Xo \kl{ \sum_{\ib = 1}^{\nb} \mu_{i,\ib} f[\ib] } }
        }_{\ib=1}^\nb \,.
    \end{align}
\end{definition}
We can decompose the operator $\aop$ in the form
\begin{equation}\label{eq:bera}
\aop(f)  = \kl{ \vop_\Y \circ \eop \circ \xop \circ  \uop } (f)
\end{equation}
where
\begin{itemize}
\item $\uop \colon L^2(D_R)^\nb \to L^2(D_R)^N\colon f \mapsto ( \sum_{\ib = 1}^{\nb} \mu_{i,\ib} f[\ib])_{i=1}^{N} $
\item $\xop \colon L^2(D_R)^N \to L^2(D_R)^N  \colon (\mu_i)_{i=1}^{N}
\mapsto (\Xo \mu_i)_{i=1}^{N}$
\item $\eop \colon L^2(D_R)^N \to L^2(D_R)^N  \colon (g_i)_{i=1}^{N}
\mapsto (\exp (- g_i))_{i=1}^{N}$
    \item $\vop_\Y \colon L^2(D_R)^N \to L^2(D_R)^\nb  \colon (g_i)_{i=1}^{N}
\mapsto (\sum_{i \in W_\ib}
        s_i g_i)_{\ib=1}^\nb$.
\end{itemize}

The operators $\vop_\Y, \xop, \uop$ are  linear and bounded. To show the continuity
and differentiability  of $\aop$ we have to verify that $\eop$ is continuous and
differentiable.

\begin{proposition}[Continuity and differentiability of $\aop$]
The operator $\aop$ is continuous and Fr\'echet differentiable.
For $f,h \in (L(D_R)^2)^\nb$ we have
\begin{equation}\label{eq:derexx}
    \aop'(f)(h) = \kl{  \vop_\Y \circ \eop'(\xop \uop f) \circ \xop \circ \uop } (h)
\end{equation}
with
\begin{equation}\label{eq:derex}
\eop'( g ) h  = - ( \exp(-g_i) h_i)_{i=1}^N \,.
\end{equation}
\end{proposition}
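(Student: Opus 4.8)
The plan is to run everything through the chain rule for Fr\'echet derivatives. By the factorization \eqref{eq:bera}, $\aop = \vop_\Y \circ \eop \circ \xop \circ \uop$, where $\uop$, $\xop$ and $\vop_\Y$ are bounded and linear, hence continuous and Fr\'echet differentiable everywhere with derivative equal to themselves. Therefore it suffices to prove that the nonlinearity $\eop$ is continuous and Fr\'echet differentiable with $\eop'(g)$ given by \eqref{eq:derex}; the formula \eqref{eq:derexx} then follows by composing the four derivatives. Since $\eop$ acts componentwise, $\eop((g_i)_{i=1}^N) = (\exp(-g_i))_{i=1}^N$, the whole problem collapses to the scalar superposition operator $g \mapsto \exp(-g)$ on a single factor, whose derivative is the multiplication operator $h \mapsto -\exp(-g)\,h$.

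For the scalar analysis I would start from the elementary pointwise identity (Taylor with integral remainder): for real numbers $a,h$,
\[
\exp(-(a+h)) - \exp(-a) + \exp(-a)\,h = h^2 \int_0^1 (1-t)\exp(-(a+th))\,\rmd t \,,
\]
together with the first-order version $\sabs{\exp(-a)-\exp(-b)} \le \sabs{a-b}\,\exp(\max(\sabs a,\sabs b))$. Integrating the squares of these over the bounded domain yields Lipschitz continuity of $\eop$ and the remainder estimate $\norm{\eop(g+h)-\eop(g)-\eop'(g)h} = O(\norm{h}^2)$, provided the arguments stay uniformly bounded; one then also checks that $g \mapsto \eop'(g)$ is a continuous, bounded multiplication operator, which completes the $C^1$ claim. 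The uniform boundedness is supplied by the tomographic model itself: $D_R$ is a disc of radius $R<1$, the attenuation maps $\mu_{i,\ib}$ and the fractional densities $f[\ib]$ are bounded, so each $\mu_i = (\uop f)_i = \sum_\ib \mu_{i,\ib} f[\ib]$ is bounded, and each $g_i = \Xo\mu_i$ is a line integral over a chord of length at most $2R$, hence $\snorm{g_i}_{\infty} \le 2R\,\snorm{\mu_i}_{\infty}$; on such bounded arguments $\exp(-\,\cdot\,)$ is globally Lipschitz and smooth.

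The main obstacle is precisely this choice of functional-analytic framework, not the calculus. The exponential, as a Nemytskii operator, does not map $L^2$ into $L^2$ on a set of positive measure, and even when the arguments are bounded, the quadratic remainder above loses integrability (its $L^2$-norm is controlled by $\norm{h}_{L^4}^2$, not $\norm{h}_{L^2}^2$). So the clean statement requires measuring perturbations in an $L^\infty$-type norm, or restricting to a domain of essentially bounded functions where $\eop\circ\xop\circ\uop$ is well defined and the first-order remainder closes; exploiting the smoothing of $\Xo$ and the a priori $[0,1]$-bounds on the densities is what makes this legitimate in the present setting. Once the domain is fixed appropriately, the remaining steps are the routine chain rule and the pointwise Taylor estimate above.
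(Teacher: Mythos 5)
Your route is the same as the paper's: factor $\aop = \vop_\Y \circ \eop \circ \xop \circ \uop$, note the three linear factors are bounded, reduce everything to the scalar superposition operator $g \mapsto \exp(-g)$, and finish with the chain rule. The difference is in the scalar step, and here your added caveat is not a side remark but exactly where the paper's own argument is thin: the paper estimates the remainder by pulling out $\exp(-g)$ (implicitly using $g \ge 0$, which is fine for line integrals of nonnegative attenuation) and then writes $\norm{\exp(-h)-1+h}_2 \le \norm{\mathcal O(h^2)}_2 \le \mathcal O\kl{\norm{h}_2^2}$, after asserting that $\norm{h}_2 \to 0$ implies pointwise convergence. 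As you observe, $\norm{h^2}_{L^2} = \norm{h}_{L^4}^2$ cannot be bounded by $\norm{h}_{L^2}^2$, the pointwise bound $\sabs{\exp(-h)-1+h} \lesssim h^2$ itself needs $h$ bounded below, and $L^2$-convergence gives pointwise convergence only along subsequences. Indeed, by the classical rigidity of Nemytskii operators, $g \mapsto \exp(-g)$ is not Fr\'echet differentiable as a map from $L^2(D_R)$ to $L^2(D_R)$ on all of $L^2$, so some restriction of the framework is unavoidable, not optional.

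Your proposed repair -- Taylor with integral remainder plus the uniform bounds supplied by the model ($f[\ib]$ bounded, $\mu_{i,\ib}$ bounded, chords of length at most $2R$, hence $\xop\uop f$ bounded) -- is the right way to make the statement rigorous, with one point to make explicit: boundedness of the base point $g$ alone does not close the remainder estimate, since the perturbation enters quadratically; you must also measure the increment in a norm controlling $L^4$ or $L^\infty$ (e.g.\ restrict $\aop$ to essentially bounded densities and take Fr\'echet differentiability with respect to an $L^\infty$-type norm on the increments, using that $\xop\uop$ maps $L^\infty$ boundedly into $L^\infty$), exactly as you indicate. With that made precise, your argument is complete and in fact repairs a gap that the paper's short proof glosses over.
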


\begin{proof}
One only has to verify that $f \mapsto \exp(-f)$ is continuous  and Fr\'echet differentiable
on  $L^2(D_R)$ with derivative given by $\eop'( g ) h =  \exp(-g) h$. For that purpose,
let $\norm{h}_{2} \to 0$ which in particular implies its point wise convergence. Therefore
\begin{align*}
& \frac{\norm{\exp( -g -h) -  \exp(-g) + \exp( -g) h }_2}{\norm{h}_2}
\\&=
 \frac{\norm{\exp( -g)  \exp(-h) -  \exp(-g) + \exp( -g) h }_2}{\norm{h}_2} \\&
\leq
 \frac{\norm{\exp(-h) -  1 +  h }_2}{\norm{h}_2}
\\&
\leq    \frac{ \norm{\mathcal O (h^2) }_2 }{\norm{h}_2}
\leq    \frac{ \mathcal O \kl{ \norm{h}_2^2}  }{\norm{h}_2}
= \mathcal O ( \norm{h}_2 )    \,.
\end{align*}
This  shows \eqref{eq:derex}, and \eqref{eq:derexx}
follows by the chain rule.
\end{proof}

In the context of the BCD method, the fractional
density maps  $f[\ib] $ play the roles of the blocks $x[\ib]$.
The form \eqref{eq:bera} of the forward operator $\aop$ {\rot has some similarity
with the form that  we used in the theoretical analysis of the BCD method, in the sense
that the  infinite dimensional  smoothing operator is applied to several channels
of a function. However,  so far we have not been able to perform an analysis
accounting for the non-linearity.}
Additionally, we apply a preconditioning technique as outlined in the following subsection. Extending the convergence analysis of BCD
such that it applies to multi-spectral CT is subject of future research.

\subsection{Logarithmic scaling and preconditioning}
\label{sec:scaling}

The energy dependence of the mass-attenuation coefficient of   different
materials can be quite similar. In order to enhance the  dependence on
the different   materials we propose a  logarithmic scaling and preconditioning
technique (different from \cite{barber2016algorithm}).
For simplicity we consider only the case $\nb = 2$, the general case can be
treated in a similar manner.

The proposed preconditioned logarithmic data take the  form
\begin{multline}\label{precon}
    \hop(f) \coloneqq
    \begin{pmatrix}
    \hop_1(f)
    \\
    \hop_2(f)
    \end{pmatrix}=
    \begin{pmatrix}
    c_{1,1}  & c_{1,2}
    \\
    c_{2,1}  & c_{2,2}
    \end{pmatrix}
    \begin{pmatrix}
    \log ( \aop_1(f) )
    \\
    \log( \aop_2(f) )
    \end{pmatrix}
   \\ =
       \begin{pmatrix}
    c_{1,1} \log( \aop_1(f))+ c_{1,2} \log(\aop_2(f))
    \\
    c_{2,1} \log(\aop_1(f))+ c_{2,2} \log(\aop_2(f))
    \end{pmatrix} \,,
    \end{multline}
 where $f = (f[1],f[2])$ are the unknowns  and $c_{1,1}$, $c_{1,2}$, $c_{2,1}$, $c_{2,2}$ are parameters.  Moreover, recall  that   $\aop_1(f)$ and $\aop_2(f)$ are the X-ray intensities
 defined by  \eqref{eq:ms} corresponding to
 $W_1, W_2 \subseteq \set{1, \dots , N}$ modeling  the discrete energy windows.
The preconditioned inverse problem consists in solving the system
\begin{align}\label{eq:invms1}
v_1 &= \hop_1(f[1],f[2])+z_1
\\ \label{eq:invms2}
v_2 &= \hop_2(f[1],f[2]) +z_2\,,
\end{align}
where $v_1,v_2$ are data perturbed  by noise
$(z_1,z_2)$.

In order to solve the equations in \eqref{eq:invms1},
\eqref{eq:invms2} with  the BCD  method  we define the residual functionals
\begin{align*}
\resfun_1(f[1],f[2])&:= \frac{1}{2}\norm{ \hop_1(f[1],f[2])-v_1}^2\,,\\
\resfun_2(f[1],f[2])&:= \frac{1}{2}\norm{ \hop_2(f[1],f[2])-v_2}^2\,.
\end{align*}
Application of the BCD  method requires
the adjoint gradient of $\aop_1$ and $\aop_2$, that we
compute next.

\begin{proposition}[Derivative of the preconditioned residuals]\label{th:derivative}
Let $f, h  \in L(D_R)^2$. The directional  derivatives of  $\resfun_1$ and $\resfun_2$
at  $f$ in direction  $h$  are given by
    \begin{multline}
        \resfun_\ib'(f)(h)
      \\
       =
       - \sum_{m=1}^{2}
       \sum_{k=1}^2
       \sum_{i \in W_\ib}
       \Big\langle \hop_\ib(f)-v_\ib,
        \frac{c_{\ib,k}}{\aop_k(f)}
         s_i
        \exp(-\Xo (\uop f)_i)\Xo(\mu_{i, m} h_m)
        \Big\rangle_{L^2}
        \,.
    \end{multline}
\end{proposition}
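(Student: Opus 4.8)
The plan is to compute the directional derivative by differentiating $t \mapsto \resfun_\ib(f+th)$ at $t=0$, peeling off in turn the outer quadratic $\frac{1}{2}\norm{\cdot}^2$, the pointwise logarithm, and the operators $\aop_k$.

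First I would fix $f,h \in (L^2(D_R))^2$ and observe that, by the Fr\'echet differentiability of $\aop$ established in the previous proposition, each component map $t \mapsto \aop_k(f+th)$ is differentiable as an $L^2$-valued function of $t$. Provided $\aop_k(f)$ is essentially bounded away from $0$ --- which holds as soon as the spectrum does not vanish identically on the window $W_k$, since then $\aop_k(f) \geq s_{i_0}\exp(-\Xo(\uop f)_{i_0}) > 0$ a.e.\ for some $i_0 \in W_k$ --- the composed map $t \mapsto \log\aop_k(f+th)$ is likewise differentiable in $L^2$, and hence so is $t \mapsto \hop_\ib(f+th) = \sum_{k=1}^2 c_{\ib,k}\log\aop_k(f+th)$. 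Differentiating $t \mapsto \frac{1}{2}\norm{\hop_\ib(f+th)-v_\ib}^2$ at $t=0$ and using the symmetry of the $L^2$ inner product then gives
\[
\resfun_\ib'(f)(h) = \inner{\hop_\ib(f)-v_\ib}{ \hop_\ib'(f)(h) }_{L^2}\,,
\]
where $\hop_\ib'(f)(h)$ denotes the directional derivative of $\hop_\ib$ at $f$ in direction $h$.

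Next I would evaluate $\hop_\ib'(f)(h)$. The pointwise chain rule for $\log$ gives $\hop_\ib'(f)(h) = \sum_{k=1}^2 \frac{c_{\ib,k}}{\aop_k(f)}\,\aop_k'(f)(h)$ (with pointwise quotient), and the derivative $\aop_k'(f)(h)$ is read off from the decomposition $\aop = \vop_\Y \circ \eop \circ \xop \circ \uop$ together with the formula $\eop'(g)h = -(\exp(-g_i)h_i)_i$ and the linearity of $\vop_\Y$, $\xop$, $\uop$ and $\Xo$:
\[
\aop_k'(f)(h) = -\sum_{i\in W_k} s_i \exp\!\big(-\Xo(\uop f)_i\big)\, \Xo\big((\uop h)_i\big) = -\sum_{i\in W_k}\sum_{m=1}^2 s_i \exp\!\big(-\Xo(\uop f)_i\big)\, \Xo(\mu_{i,m}h_m)\,,
\]
where I used $(\uop h)_i = \sum_{m=1}^2 \mu_{i,m}h_m$.

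Substituting this into the formula for $\resfun_\ib'(f)(h)$ and pulling the finite sums over $k$, $i \in W_k$ and $m$ out of the inner product by linearity produces the asserted identity. The only step that requires genuine care --- and the main obstacle --- is the rigorous justification that $t \mapsto \log\aop_k(f+th)$ is differentiable \emph{as an $L^2$-valued map}, with the claimed pointwise derivative: this needs the positivity/boundedness away from zero of $\aop_k(f)$ together with a dominated-convergence argument to move the difference-quotient limit inside the $L^2$-norm. Once this is granted, the remainder is a routine application of the chain rule, the bilinearity of the inner product, and the linearity of $\Xo$ and $\uop$.
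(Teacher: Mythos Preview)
Your proposal is correct and follows exactly the paper's approach: the paper's entire proof is the single line ``This follows from the chain rule,'' and you have simply written out that chain-rule computation in detail, with more care about the $L^2$-differentiability of the pointwise logarithm than the paper itself provides. (Note that your computation correctly yields $\sum_{i\in W_k}$ in the inner sum rather than the $\sum_{i\in W_\ib}$ printed in the displayed statement, which appears to be a typo there.)
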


\begin{proof}
This follows from the chain rule.
\end{proof}

From Proposition~\ref{th:derivative} we conclude that the partial gradients of  the
residual functionals  $\resfun_\ib$  are given by
\begin{multline}
\partial_{m}
\resfun_\ib(f) =  - \sum_{i \in W_\ib} \mu_{i,m}\Xo^{*}\left[s_i \exp(-\Xo(\uop f)_i)(\hop_\ib(f)-v_\ib)\frac{c_{\ib,1}}{\aop_1(f)}\right]
    \\
   - \sum_{i \in W_\ib}  \mu_{i,m} \Xo^{*}\left[s_i \exp(-\Xo(\uop f)_i)(\hop_\ib(f)-v_\ib)\frac{c_{\ib,2}}{\aop_2(f)}\right] \,.
\end{multline}
These expressions will be used for the implementations of the  BCD as well as the Landweber  method applied to the  preconditioned system  \eqref{eq:invms1}.

\begin{figure}[htb!]
\centering
\includegraphics[width=0.49\textwidth]{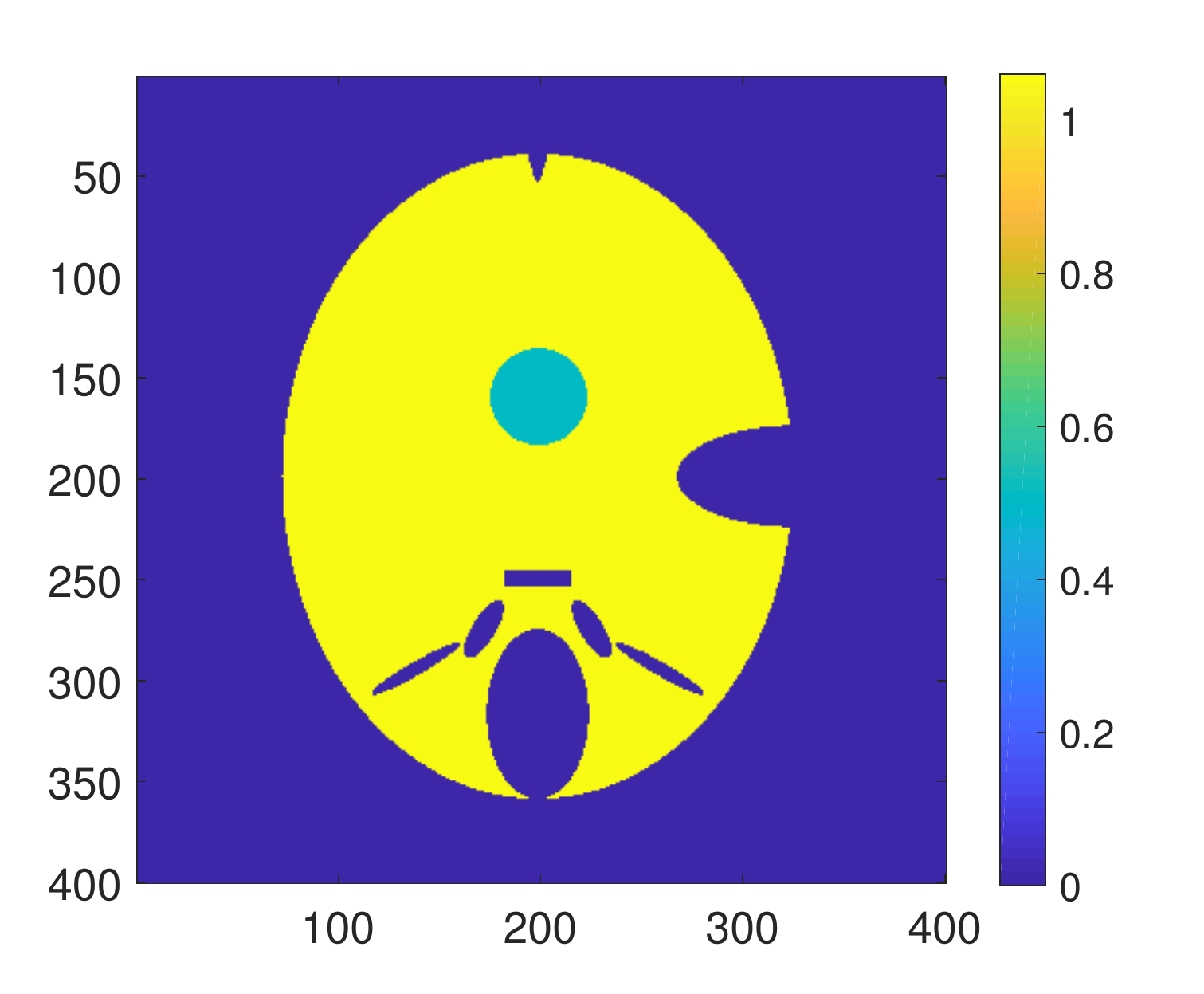}
\includegraphics[width=0.49\textwidth]{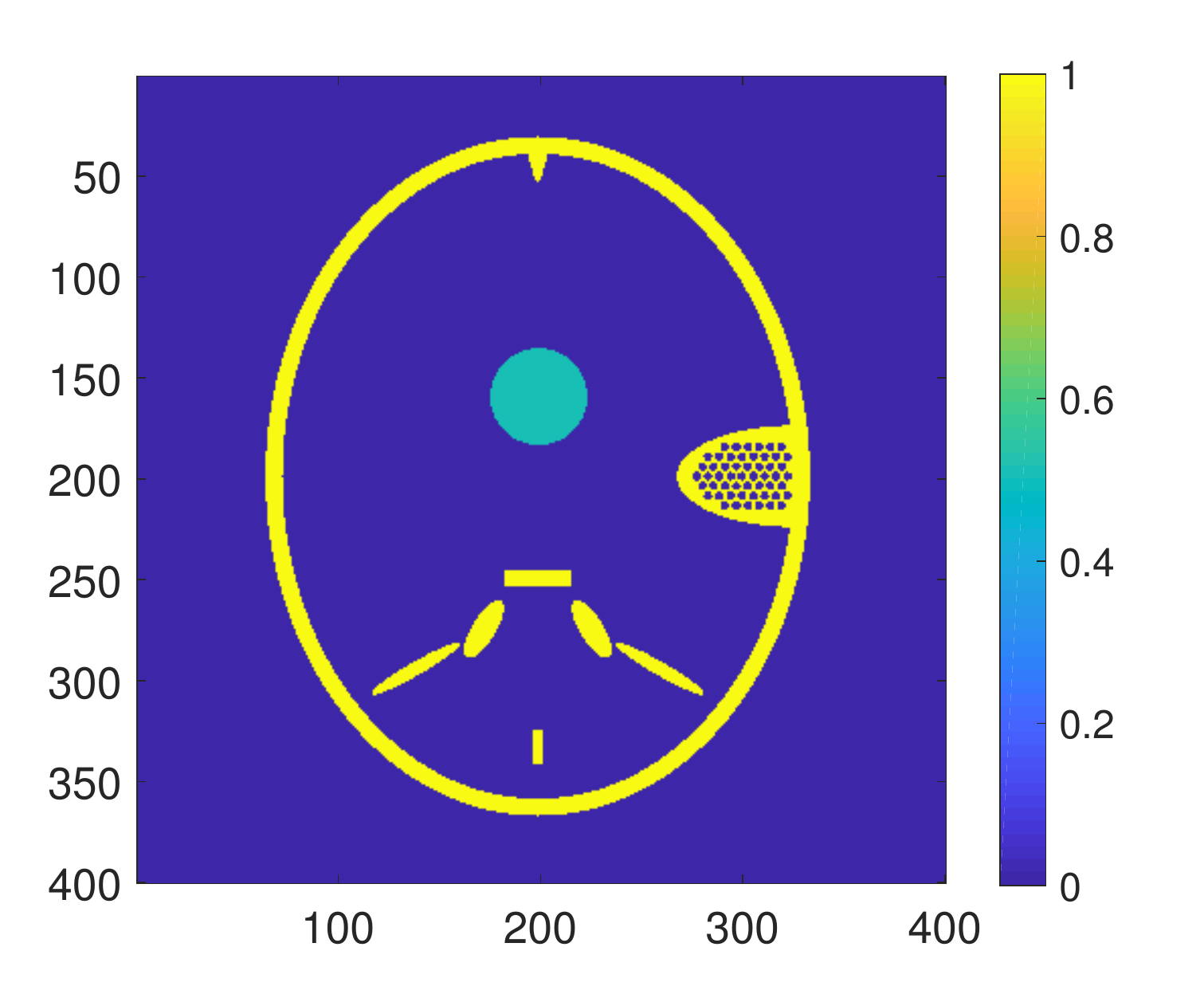}
\caption{\textbf{Phantom  $f = (f[1], f[2])$ used for the numerical results.}
Left: brain  density map $f[1]$. \label{phantom}
Right: bone density map $f[2]$. Both are derived from the FORBILD head phantom, where
a uniformly absorbing disc of value $1/2$ has been added to both channels.}
\end{figure}

\begin{figure}[htb!]
\centering
\includegraphics[width=0.49\textwidth]{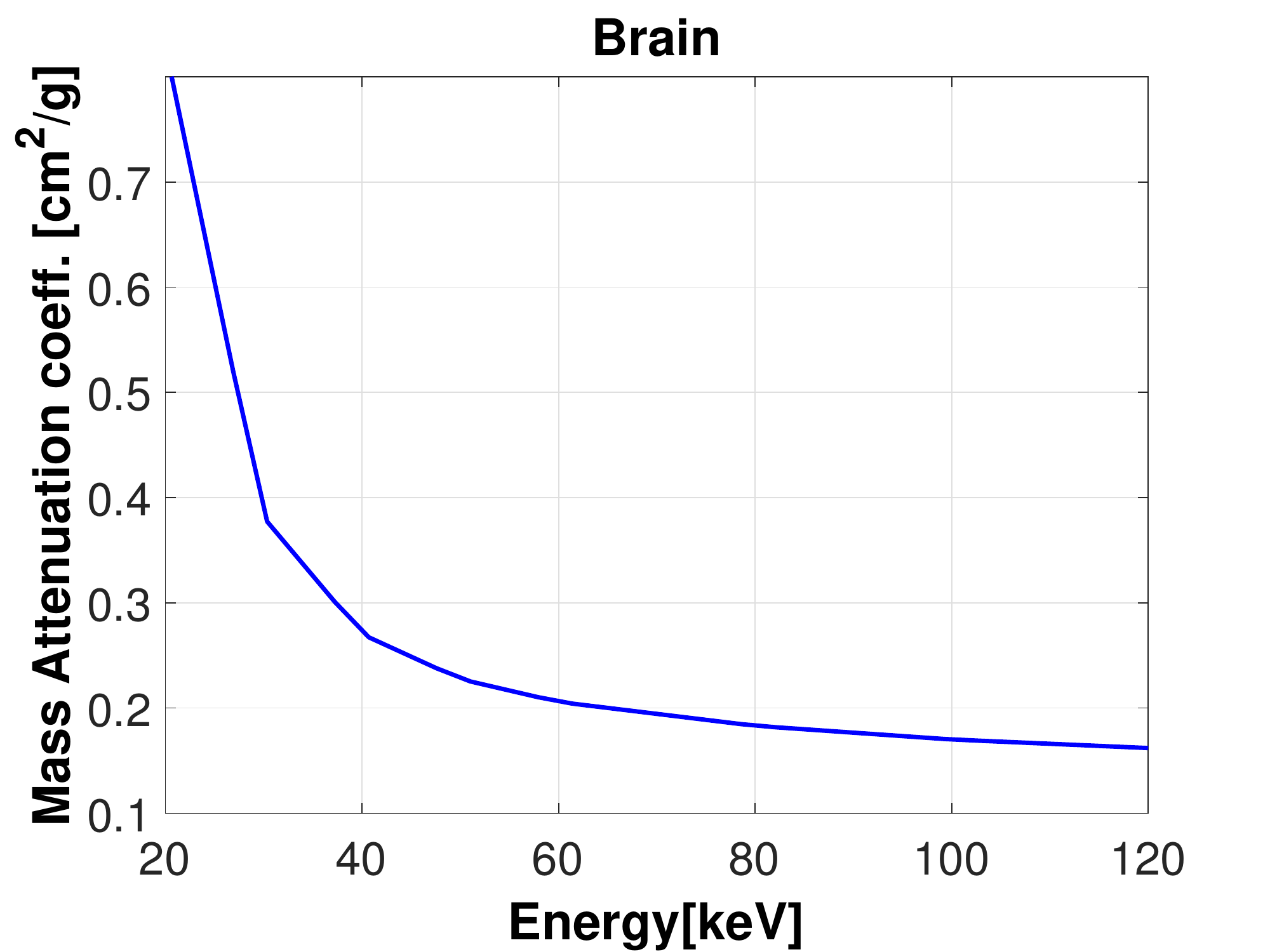}
\includegraphics[width=0.49\textwidth]{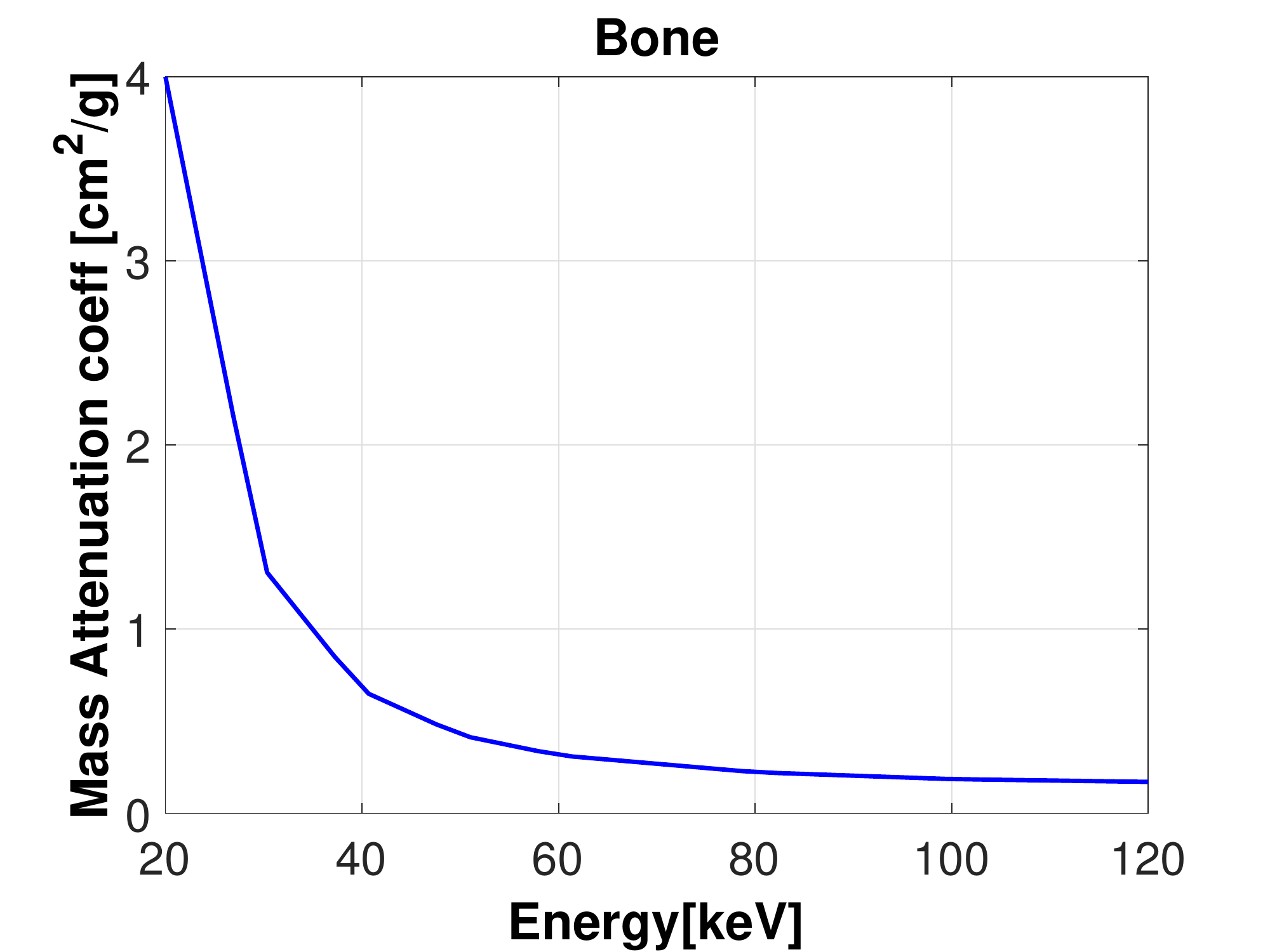}
\caption{\textbf{Attenuation coefficients  of brain and bone   taken from NIST tables \cite{hubell04}}.\label{massatt}
Left: Attenuation spectrum for brain.
Right: Attenuation spectrum  for bone.}
\end{figure}

\begin{figure}[htb!]
\centering
\includegraphics[width=0.49\textwidth]{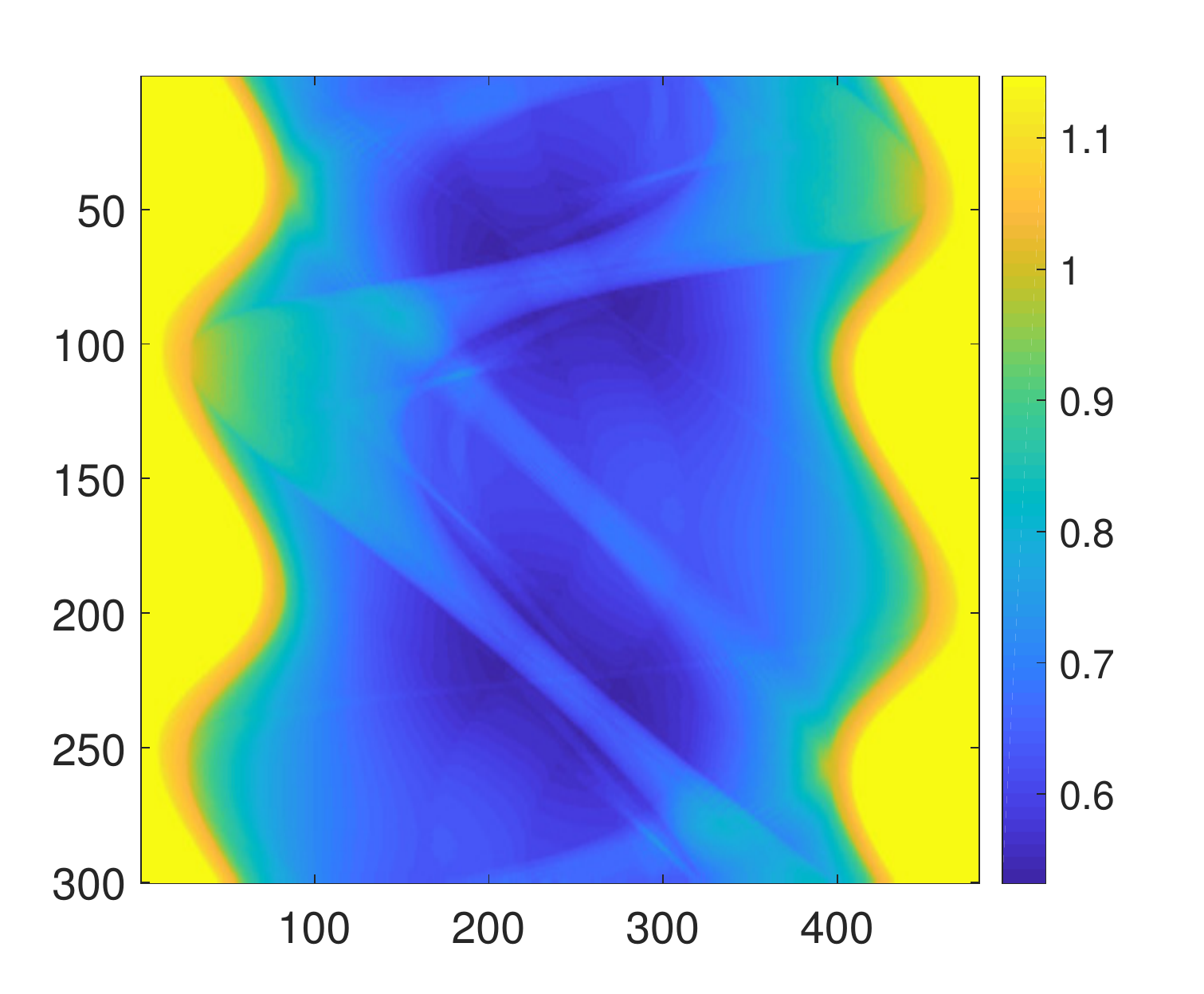}
\includegraphics[width=0.49\textwidth]{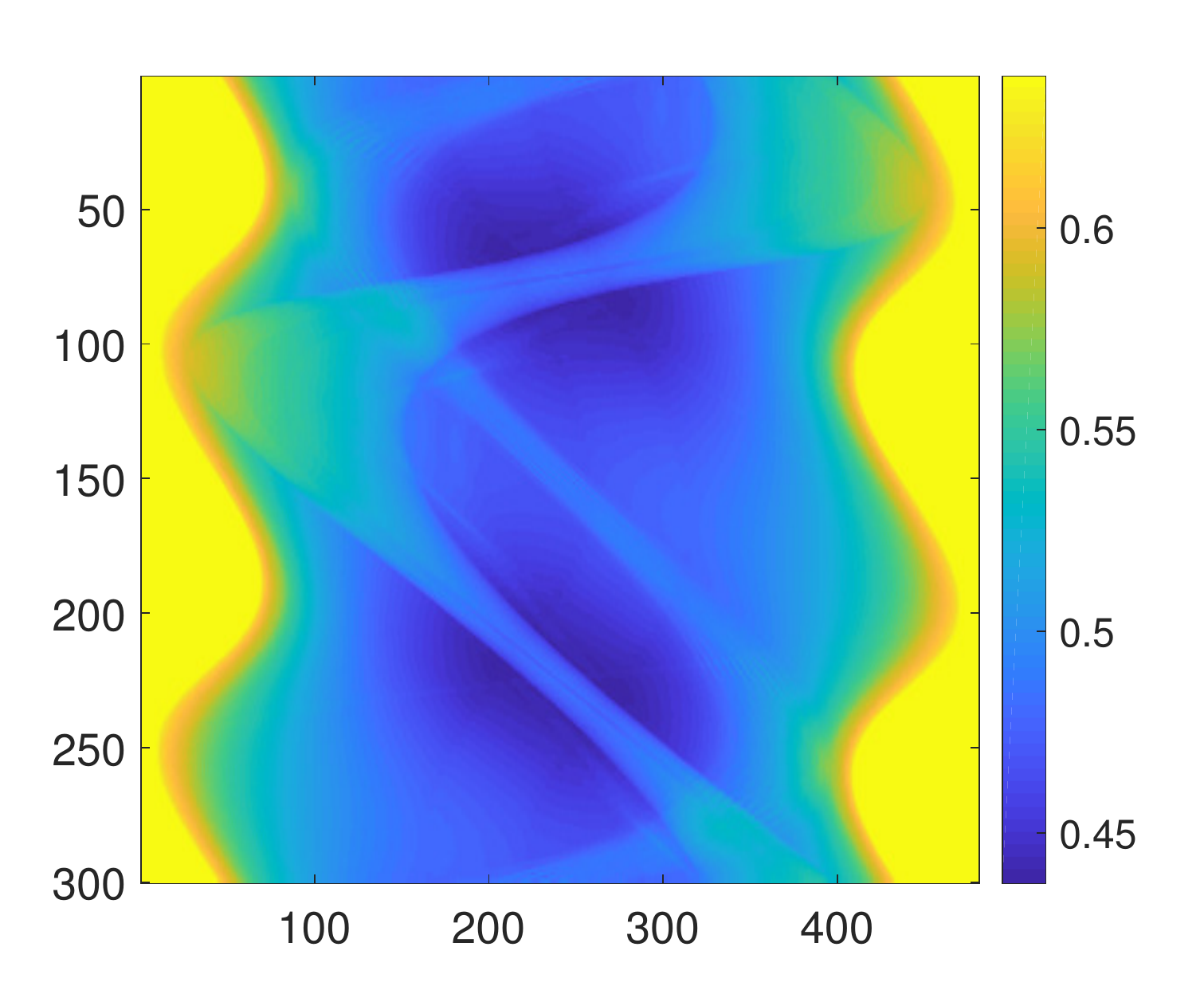}
\includegraphics[width=0.49\textwidth]{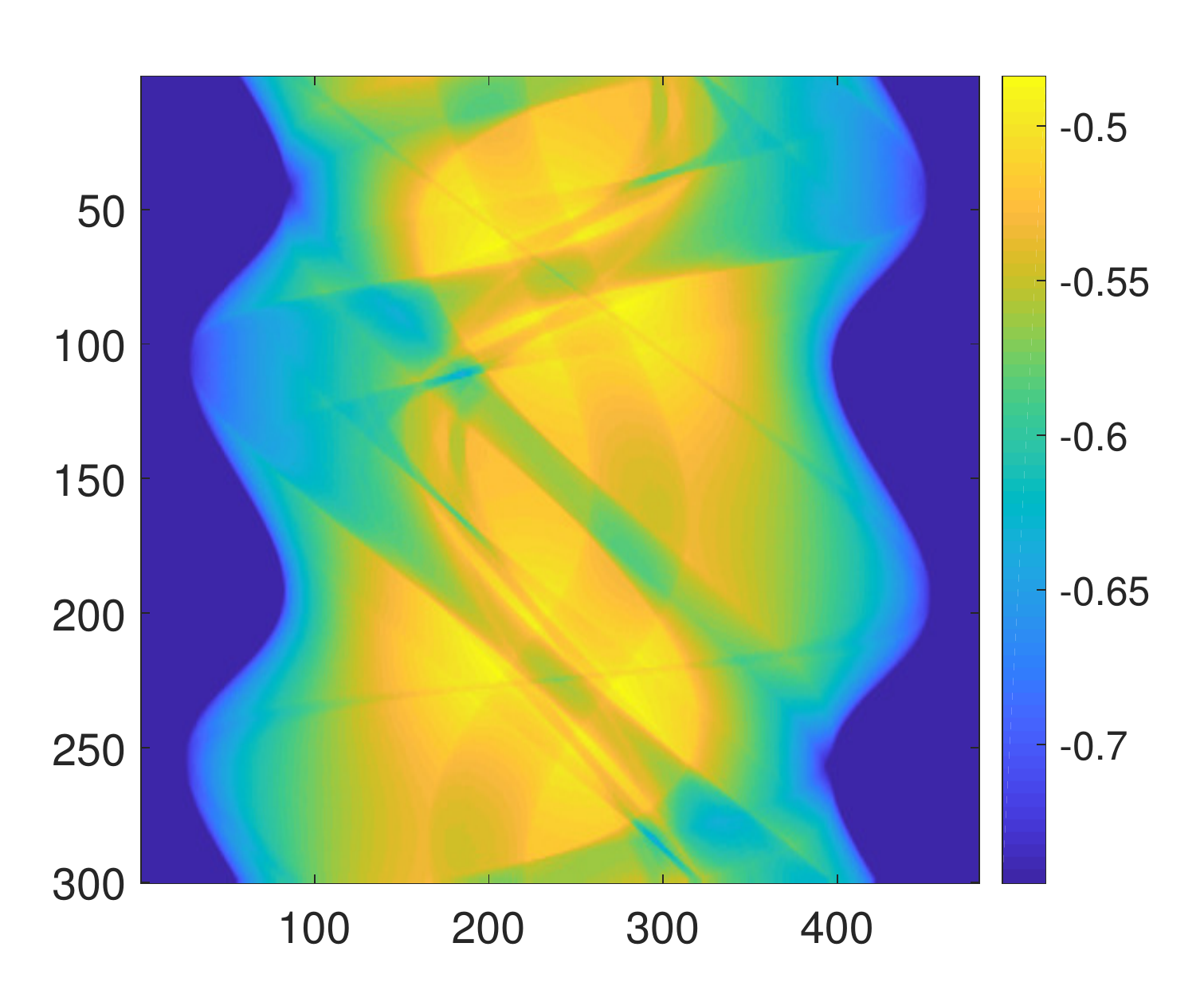}
\includegraphics[width=0.49\textwidth]{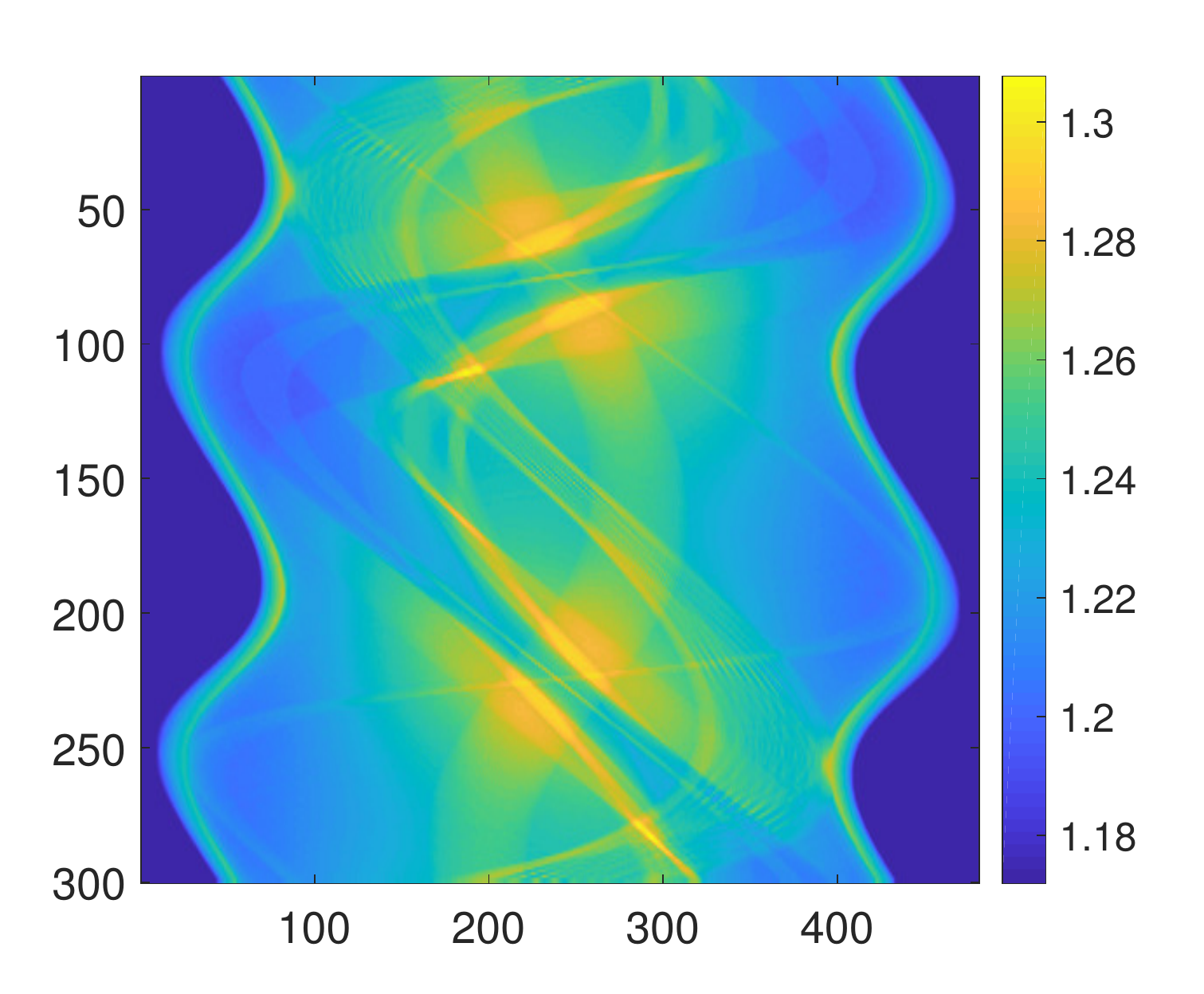}
\includegraphics[width=0.49\textwidth]{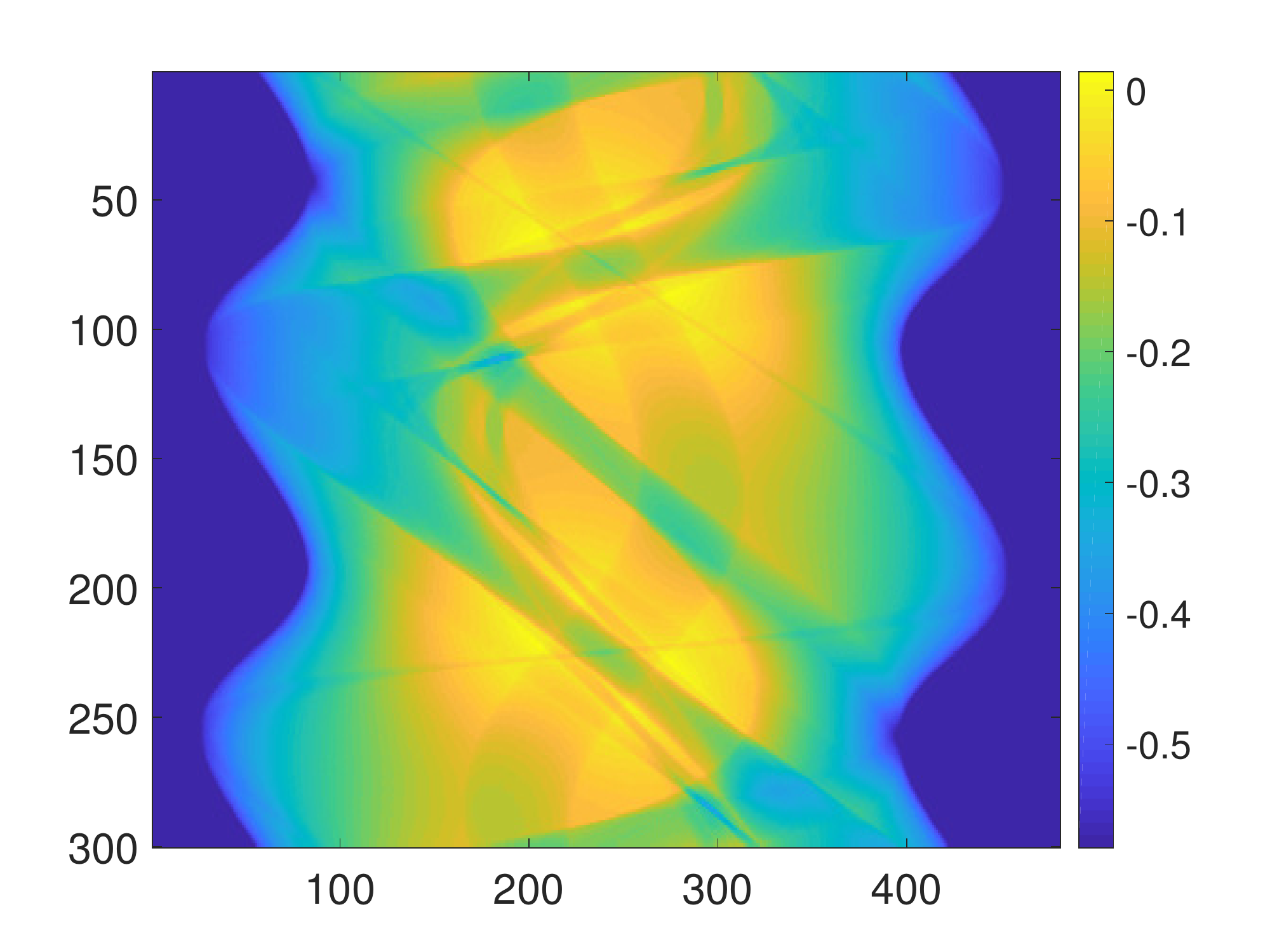}
\includegraphics[width=0.49\textwidth]{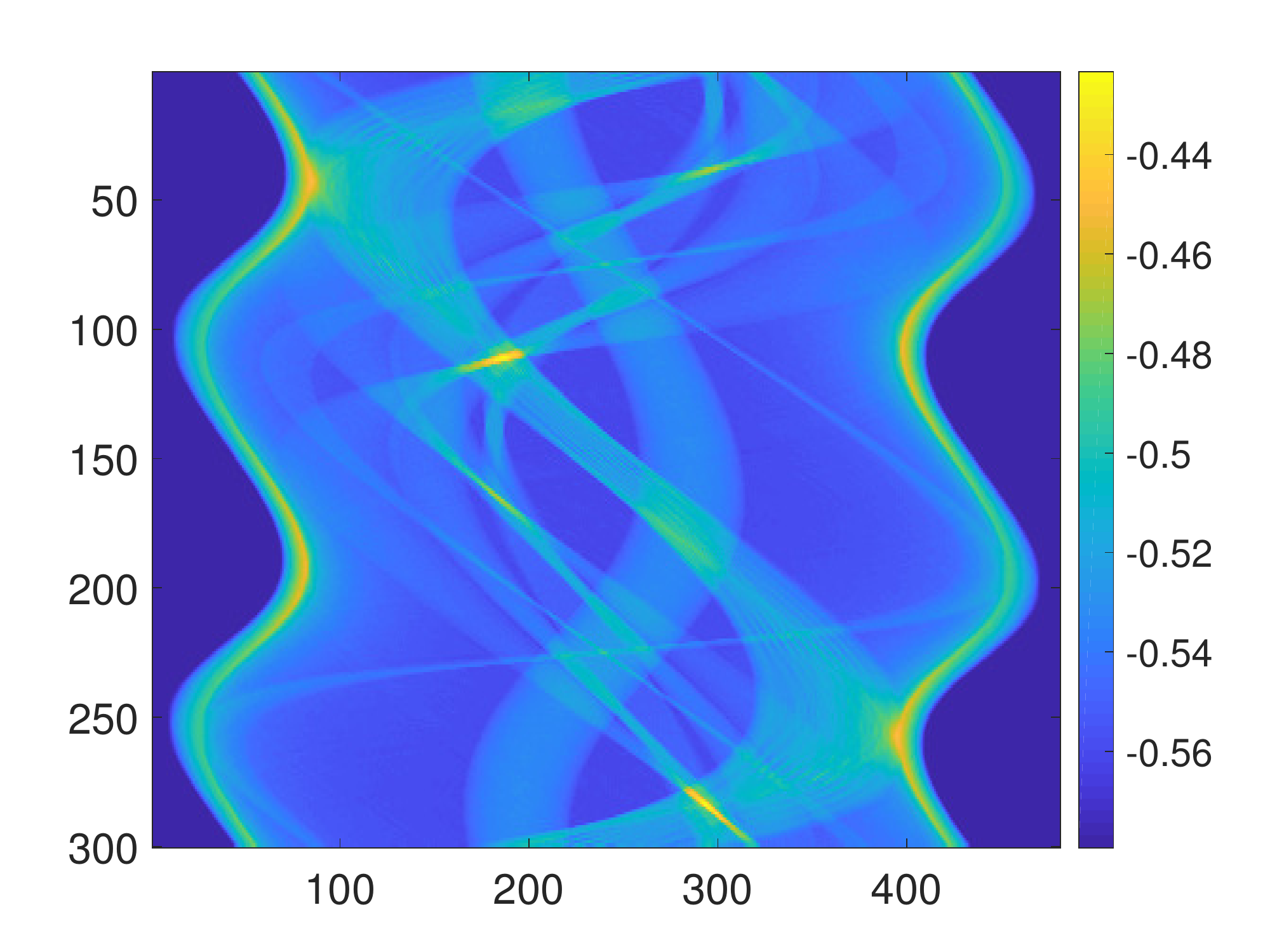}
\caption{\textbf{Simulated multi-energy X-ray data}. \label{data}
Top: Data for energy window $[\SI{20}{keV},\SI{70}{keV}]$ (left)  and $[\SI{70}{keV},\SI{120}{keV}]$ (right).
Middle: Corresponding preconditioned logarithmic data.
Third row:
Simulated data for the full energy window $[\SI{20}{keV},\SI{120}{keV}]$,
where the tissue consists only of the
brain map (left) and the bone map (right). }
\end{figure}

\subsection{Numerical implementation}

For all our experiments we used  fan beam geometry.
Each channel of the discrete phantom has
size $400 \times 400$.
We discretized $\Xo$ using 300 detector positions $\alpha_k$ equidistantly distributed on $\sph^1$.
For each detector position we compute $481$ line integrals for uniformly distributed  angles $\beta_\ell$ in the interval
$[-\pi/3,\pi/3]$. To actually compute $\Xo f(\alpha_k, \beta_\ell)$ we used the
trapezoidal rule and linear  interpolation where we discretized
the line integral using 400 equidistant sampling points in the interval $[0,2]$.
The adjoint $\Xo^* g$ is evaluated using the standard backprojection algorithm
with linear interpolation. We used $N = 30$ equidistant discrete energy positions
from $\SI{20}{keV}$ to $\SI{120}{keV}$.

For our numerical studies we apply one-step inversion in  multi-spectral CT tomography to
reconstruct a head phantom composed of two different material map derived from FORBID head.
The phantom is shown in Figure~\ref{phantom} and consists of the pair
$f = (f[1], f[2])$, where
$f[1]$ corresponds to the  fractional density of the brain and
$f[2]$  to the fractional density  of the bone material. We slightly modified the FORBID head phantom
by inserting a disk with value $1/2$ in both components  to demonstrate
that the method can actually  reconstruct  mixed material distributions.
The mass attenuation coefficients of the material maps (bone and brain)
are taken from NIST tables \cite{hubell04} and are  shown in Figure~\ref{massatt}.

Figure \ref{data} shows the   data   used for image reconstruction.
In the first row original data $\aop(f)  = (\aop_1(f)), \aop_2(f)$ according to Definition~\ref{def:ms} are plotted,
where the indices $1$ and $2$ corresponds to energy windows $[\SI{20}{keV},\SI{70}{keV}]$  and $[\SI{70}{keV},\SI{120}{keV}]$, respectively.
One can observe, the data for both energy windows look  quite   similar. This is because of the  similar energy dependence  of the mass attenuation coefficients for $f[1]$ and $f[2]$; compare  Figure~\ref{massatt}. For this reason, we make use of the proposed  scaling and preconditioning  outlined in Section~\ref{sec:scaling}.
The second row shows the preconditioned data we use for the reconstruction.
For comparison purpose, the last row in Figure~\ref{data} shows the negative logarithm of the X-ray intensities for the full energy window,
with in each case containing only one of the material maps.  We have chosen the  constants $c_{1,1}=1$, $c_{1,2}=-1.35$, $c_{2,1}=-1$ and $c_{2,2}=2.3$
in such a way that each of the modified data blocks highlights
different aspects of the material maps. Note that we have selected the constants
for data of a very  different  phantom  in order to avoid inverse crime.

\begin{figure}[htb!]
\centering
\includegraphics[width=0.49\textwidth]{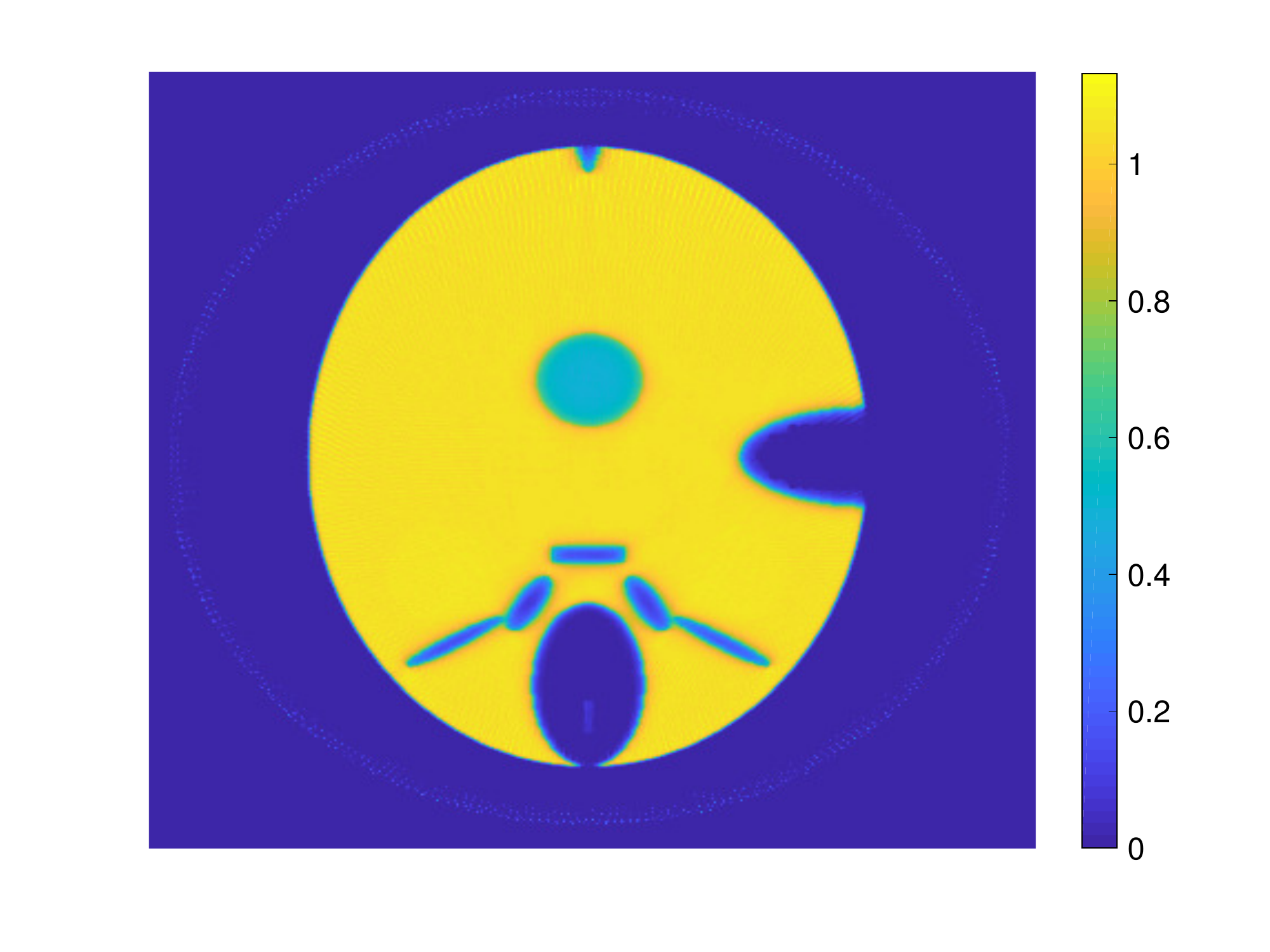}
\includegraphics[width=0.49\textwidth]{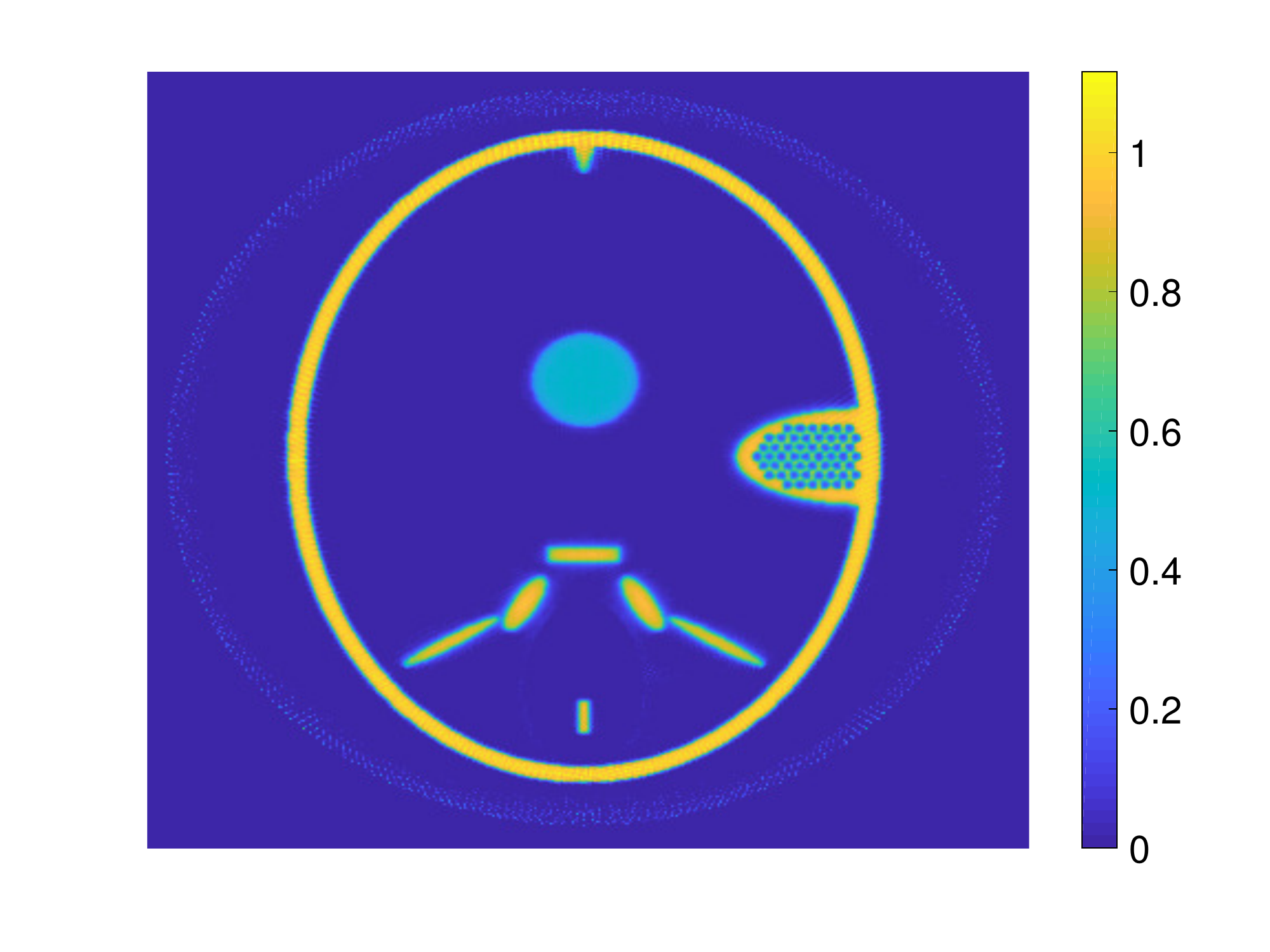}
\includegraphics[width=0.49\textwidth]{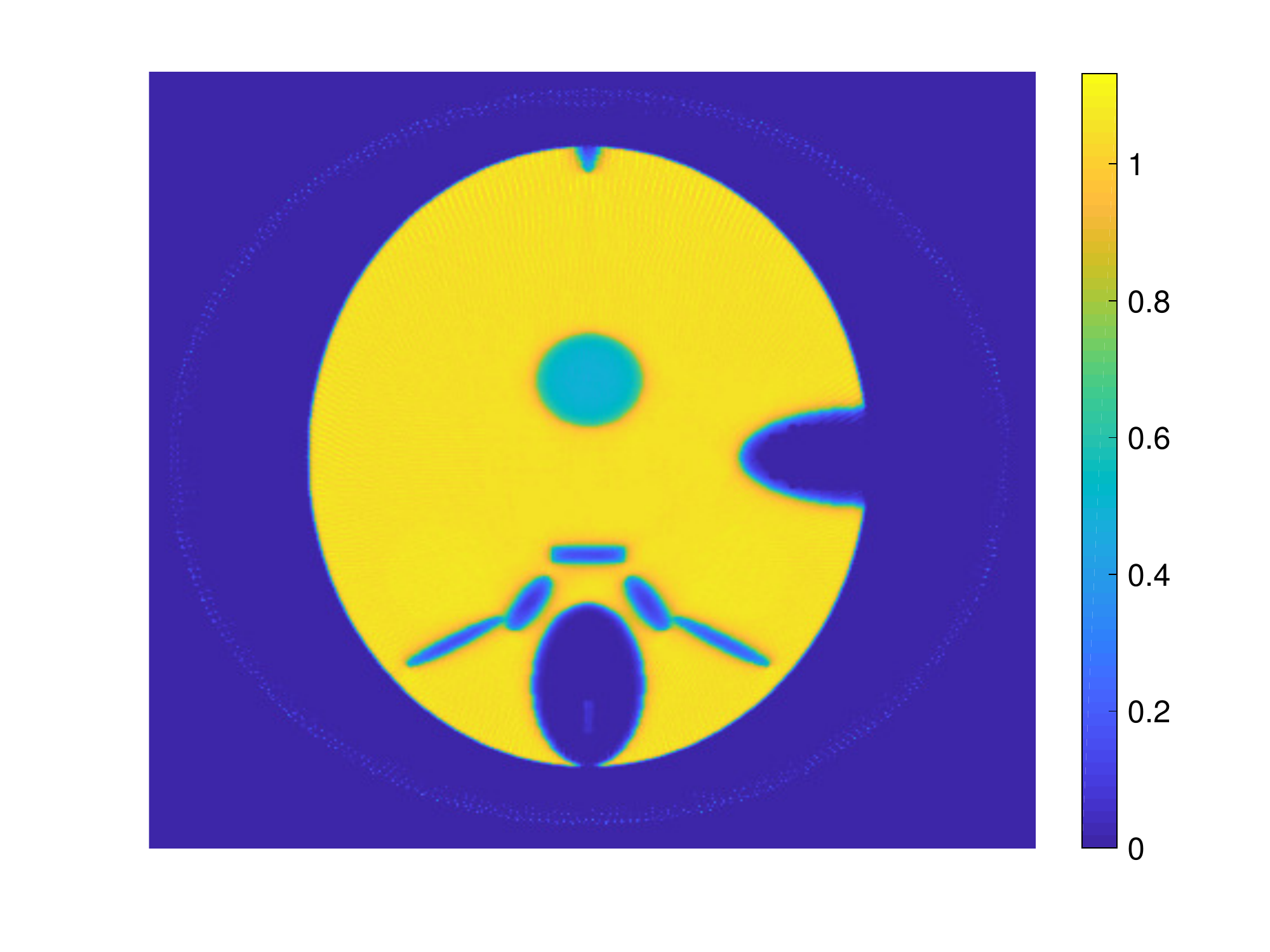}
\includegraphics[width=0.49\textwidth]{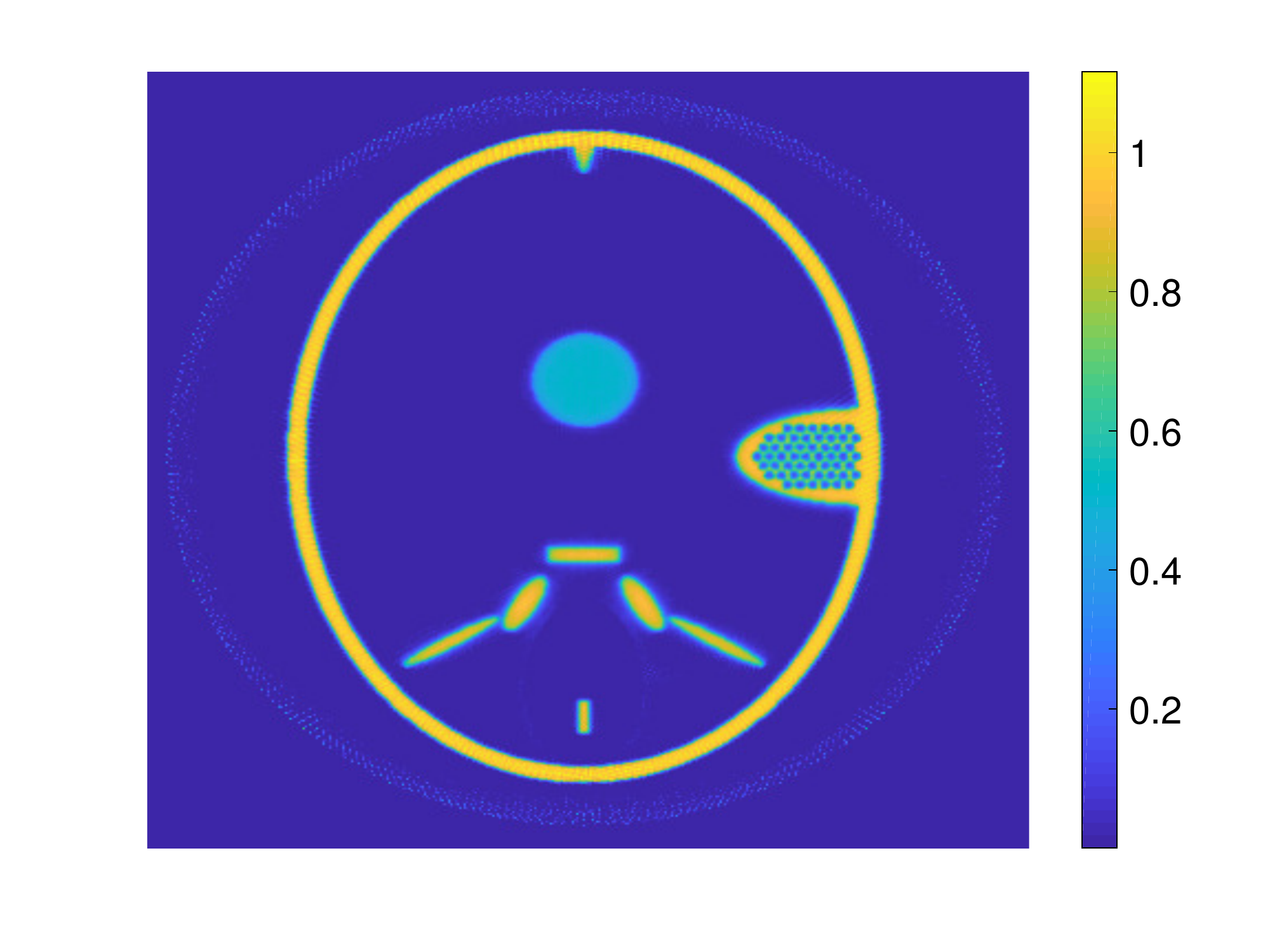}
\caption{
\textbf{Reconstruction results for simulated data.}\label{rec}
Top left: Reconstructed brain density with Landweber method.
Top right: Reconstructed bone density with Landweber method.
Bottom left: Reconstructed brain density with BCD method.
Bottom right: Reconstructed bone density with BCD method.
For the Landweber method we have used $300$ iterations,
for the BCD method $300$ cycles.}
\end{figure}

\begin{figure}[htb!]
\centering
\includegraphics[width=0.49\textwidth]{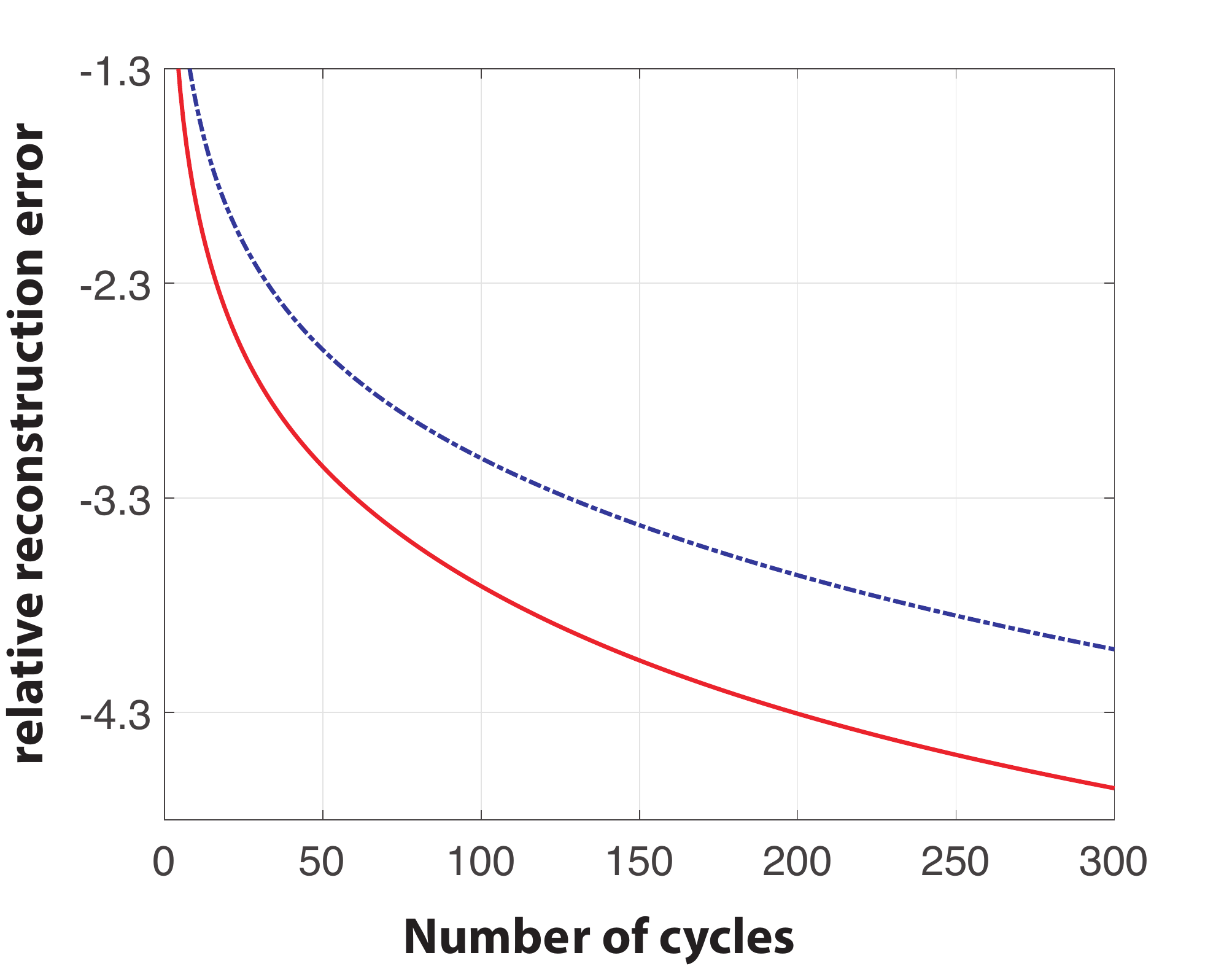} 
\includegraphics[width=0.49\textwidth]{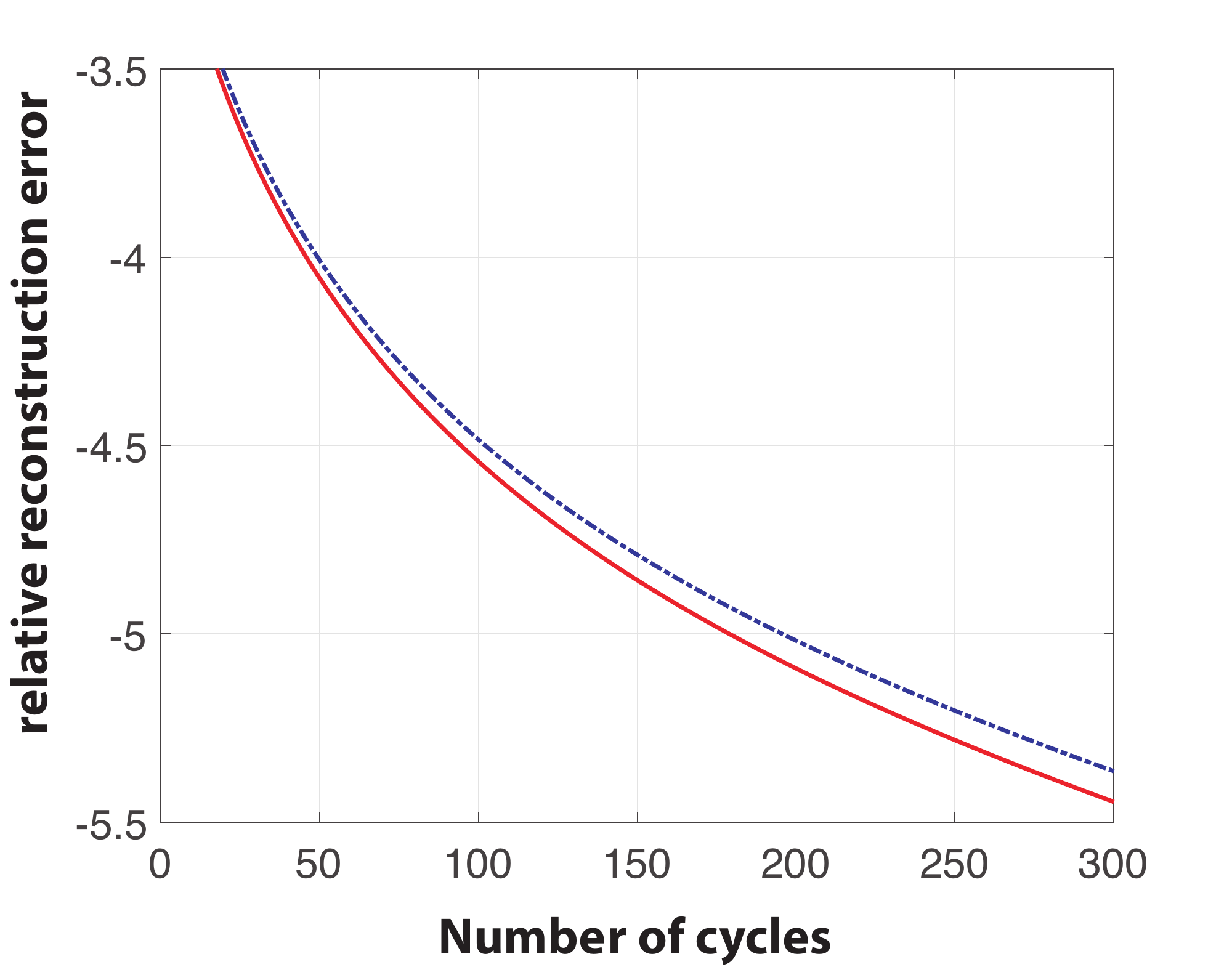}
\caption{\textbf{Relative reconstruction error for simulated data.}\label{err}
Left: Reconstructed brain density. Right:  Reconstructed  bone density.
The Landweber method is shown in dashed blue and the BCD method in solid red.}
\end{figure}

\subsection{Numerical results}

For the following results we  compare the performance of the BCD
method  with the standard gradient method as  reference method.
We use a cyclic control $\ibs(k) = (k-1) \mod \nb$ and  constant step sizes for
both methods.
Note that for the BCD as well as the Landweber method we  included a positivity
constraint.  Figure \ref{rec} shows  reconstruction results for the bone and brain material  map.
Due to the applied logarithmic scaling and preconditioning, both methods are
able to separate the materials after a reasonable number of iterations.
One observes that even the mixed part can be reconstructed as well.

Figure \ref{err} shows the relative squared reconstruction errors
\begin{equation*}
e[\ib] \coloneqq \frac{\norm{f[\ib]-f_{\rm rec}[\ib]}^2}{\norm{f_{\rm rec}[\ib]}^2}
\end{equation*}
of the bone and the brain map using the Landweber method and the BCD method. The horizontal axes show the number of iterations in the Landweber method
and the number of cycles (number of iterations divided by the  number of blocks)
in the BCD method. A cycle for the BCD method has the same numerical complexity
as one iteration for the Landweber method.
The BCD method delivers a lower relative error for the brain map, the relative error of the reconstruction for the bone map is similar for both methods.

Reconstruction results for noisy data are shown in Figure \ref{rec:noise}.
To generate the noisy data, we added Gaussian white noise with standard deviation equal
to $\SI{2}{\percent}$ of the maximal value of the exact data.
In order to maintain stability of both iterations we stopped the Landweber iteration after $116$ iterations, accordingly the BCD-method is stopped after $116$ cycles.
The relative squared reconstruction error is shown  in Figure~\ref{err:noise}.
Again, the BCD method is roughly  a  factor two faster than the Landweber
 method in recovering the brain map. For recovering the bone map, both
 methods are equally fast. We associate this  different behavior to the particular form of
 preconditioning. As can be seen  from the second line in  Figure~\ref{data},
 both  preconditioned  data pairs  contain significant parts of the
 data corresponding to the brain  whereas the bone data  is mainly contained
 in the second one.  Investigating  optimal weights  for the preconditioning  is an interesting aspect of future work.

\begin{figure}[htb!]
\centering
\includegraphics[width=0.49\textwidth]{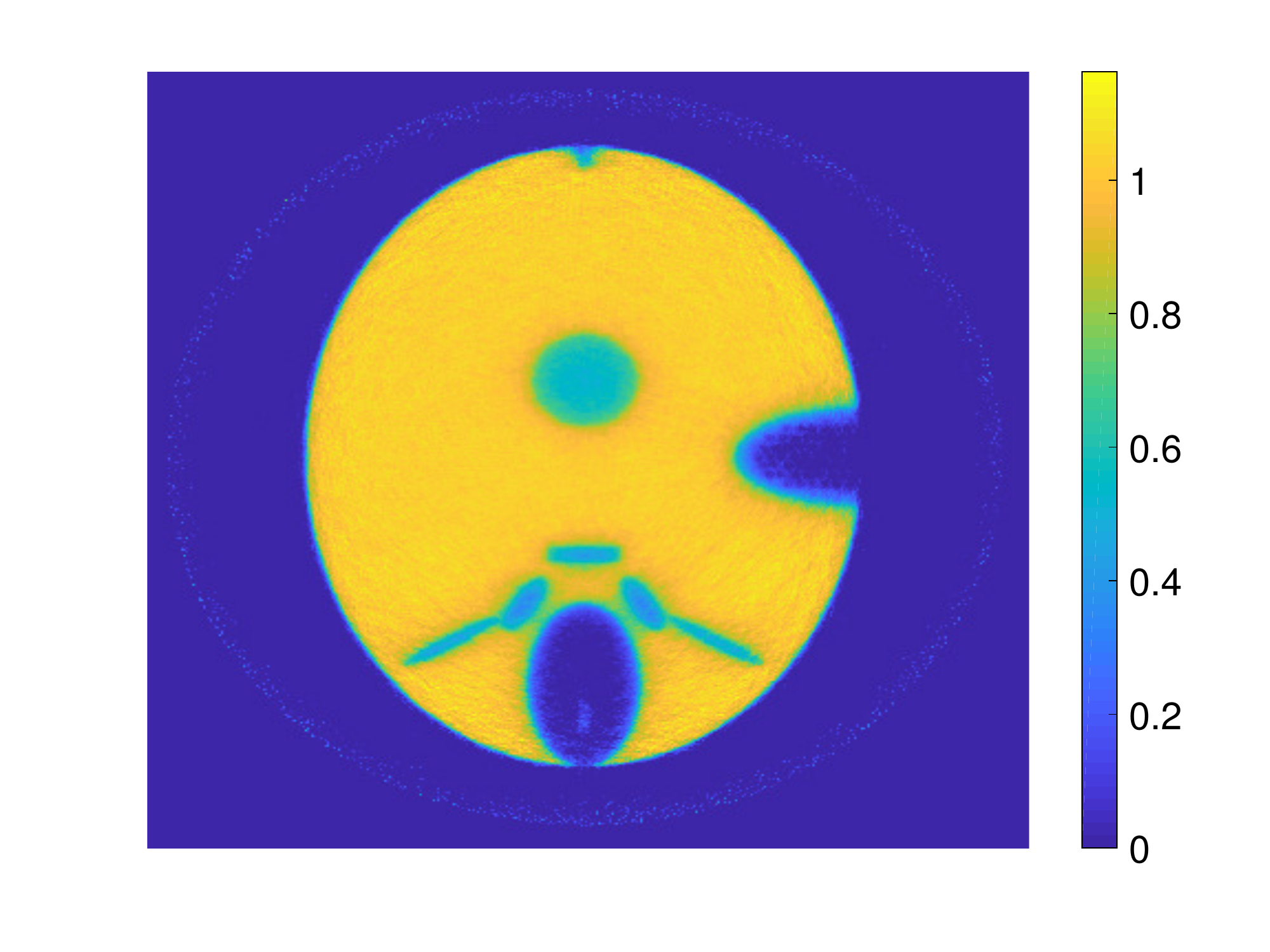}
\includegraphics[width=0.49\textwidth]{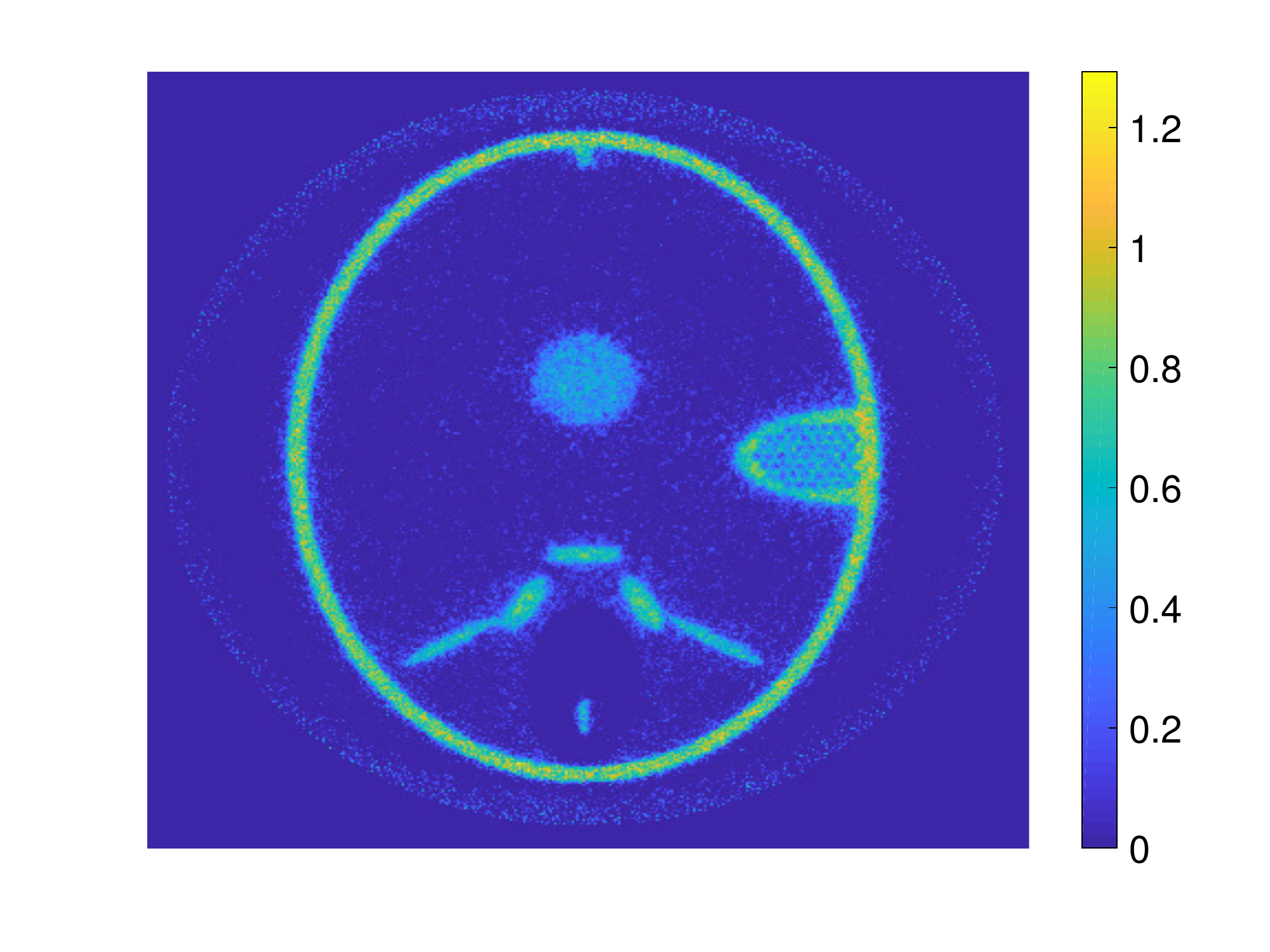}
\includegraphics[width=0.49\textwidth]{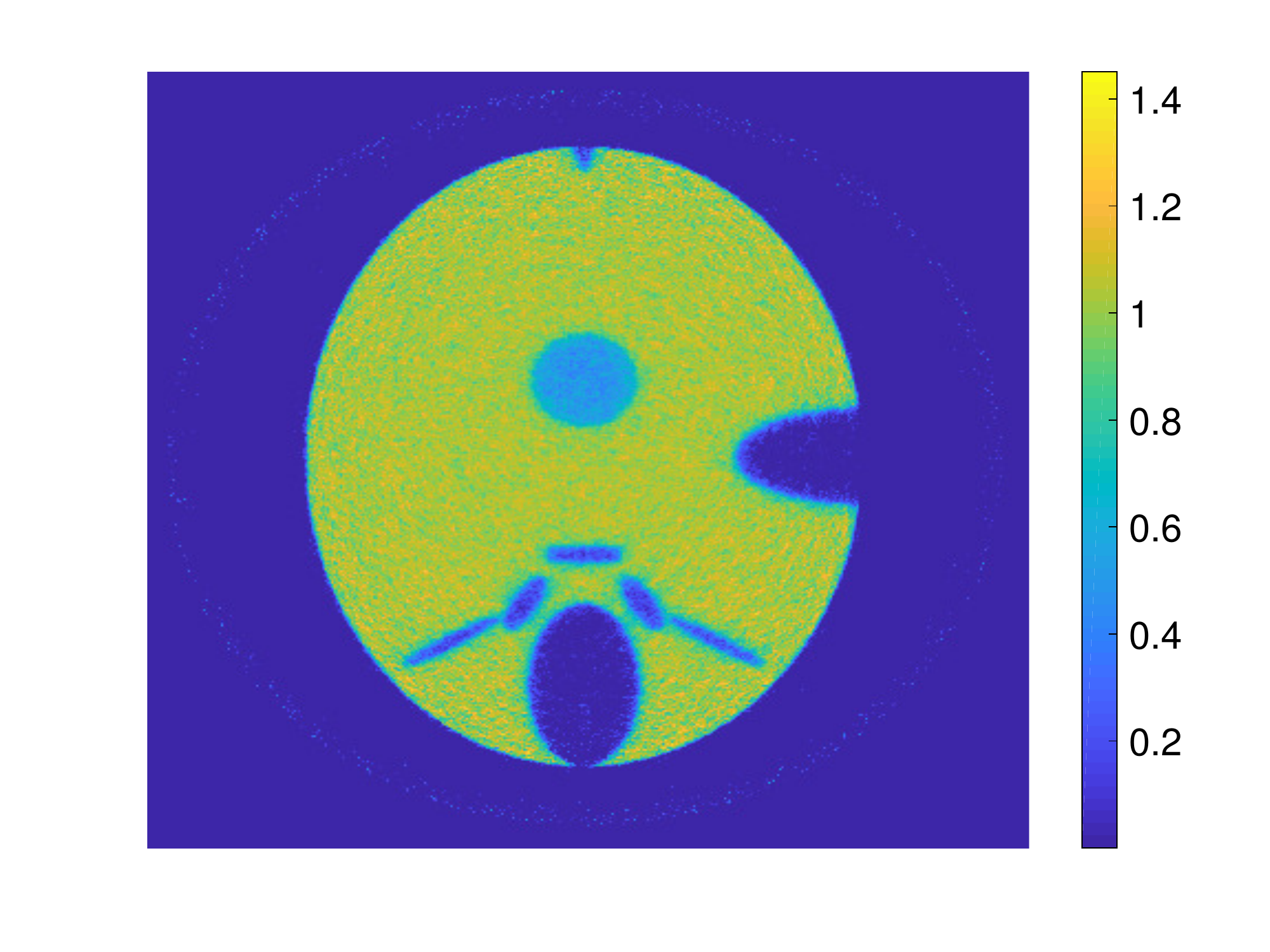}
\includegraphics[width=0.49\textwidth]{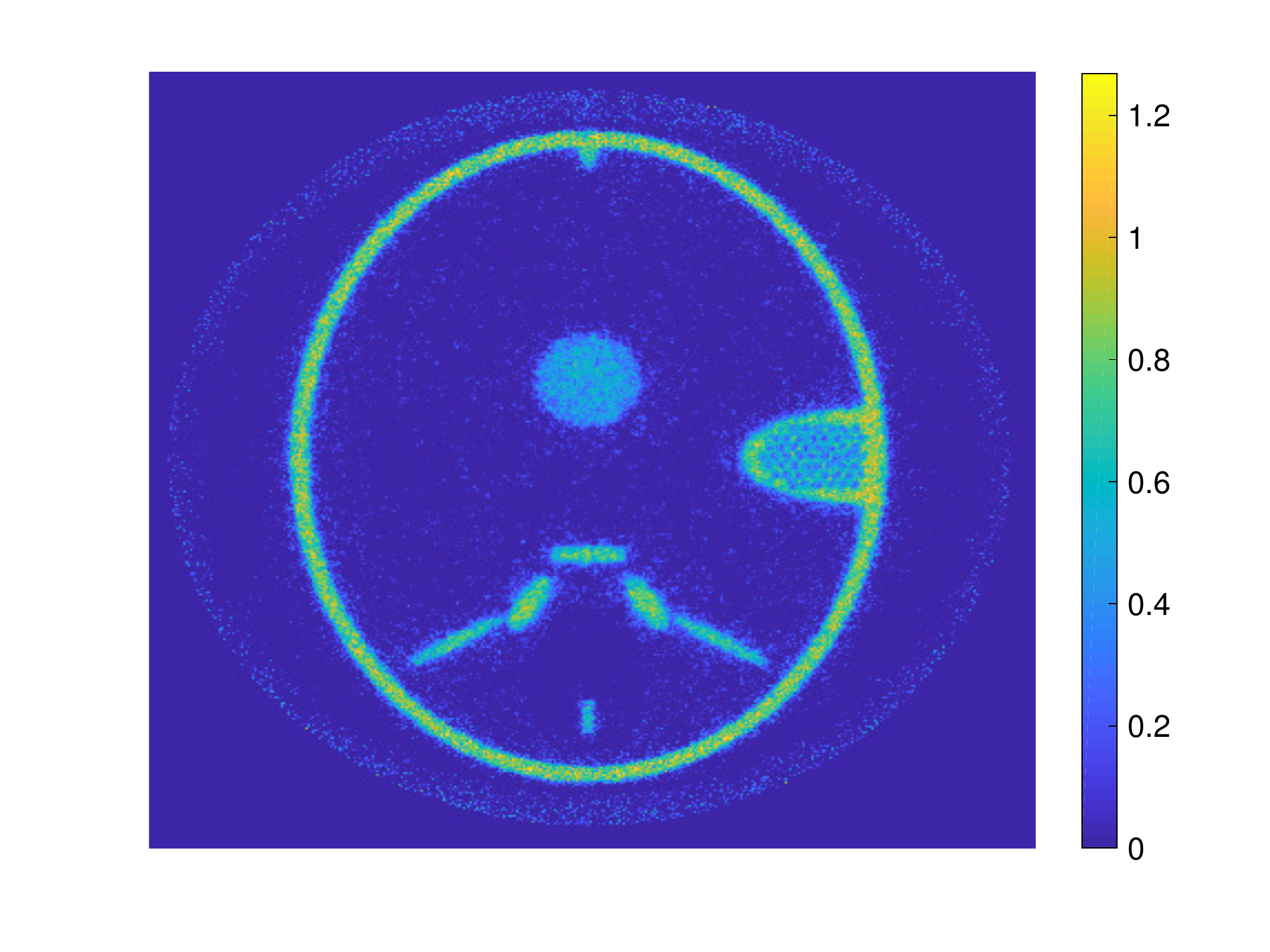}
\caption{\textbf{Reconstruction results for noisy data.}\label{rec:noise}
Top left: Reconstructed brain density with Landweber method.
Top right: Reconstructed bone density with Landweber method.
Bottom left: Reconstructed brain density with BCD method.
Bottom right: Reconstructed bone density with BCD method.
The Landweber method we have used $116$ iterations and for the BCD method $116$ cycles.}
\end{figure}

\begin{figure}[htb!]
\centering
\includegraphics[width=0.49\textwidth]{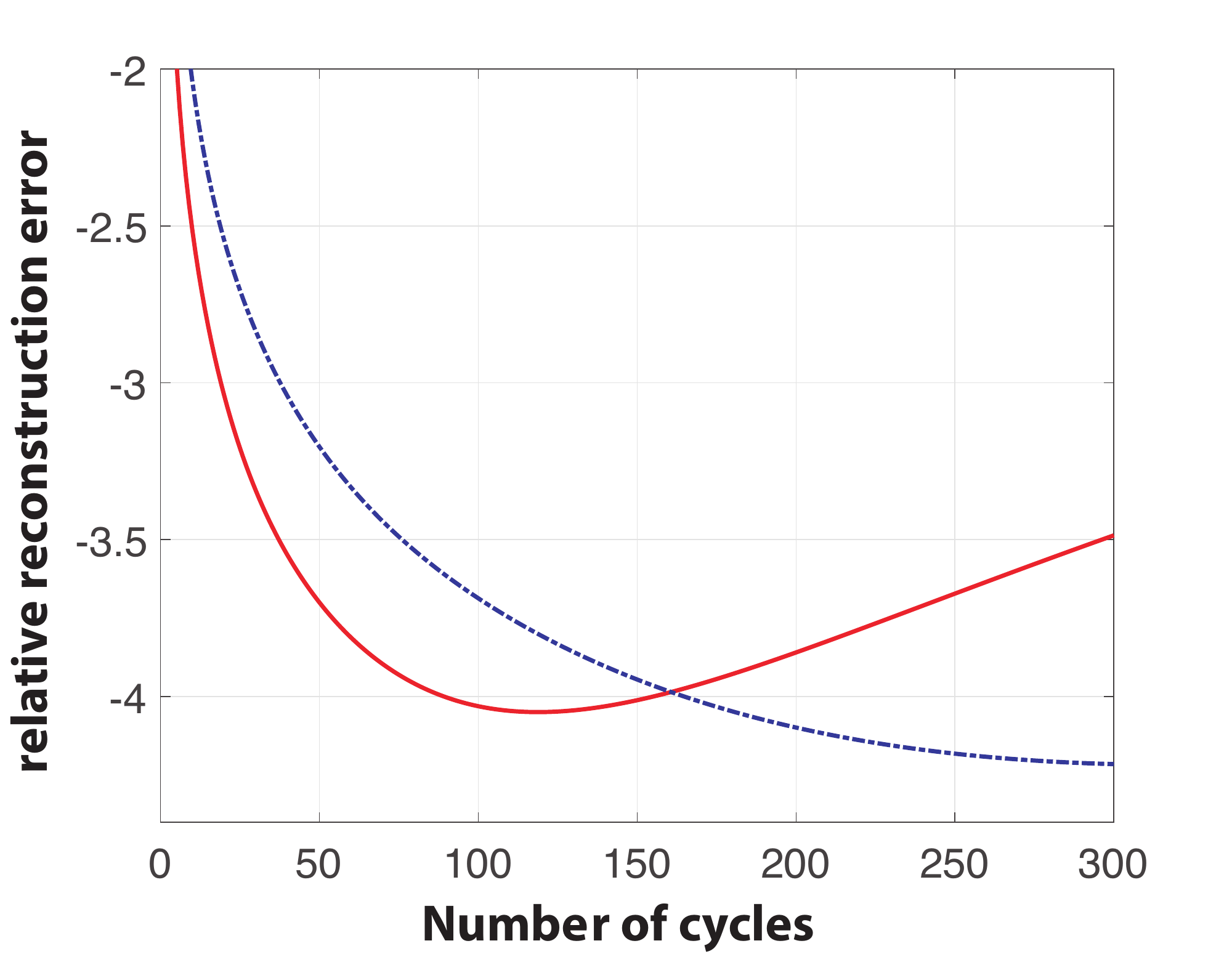}
\includegraphics[width=0.49\textwidth]{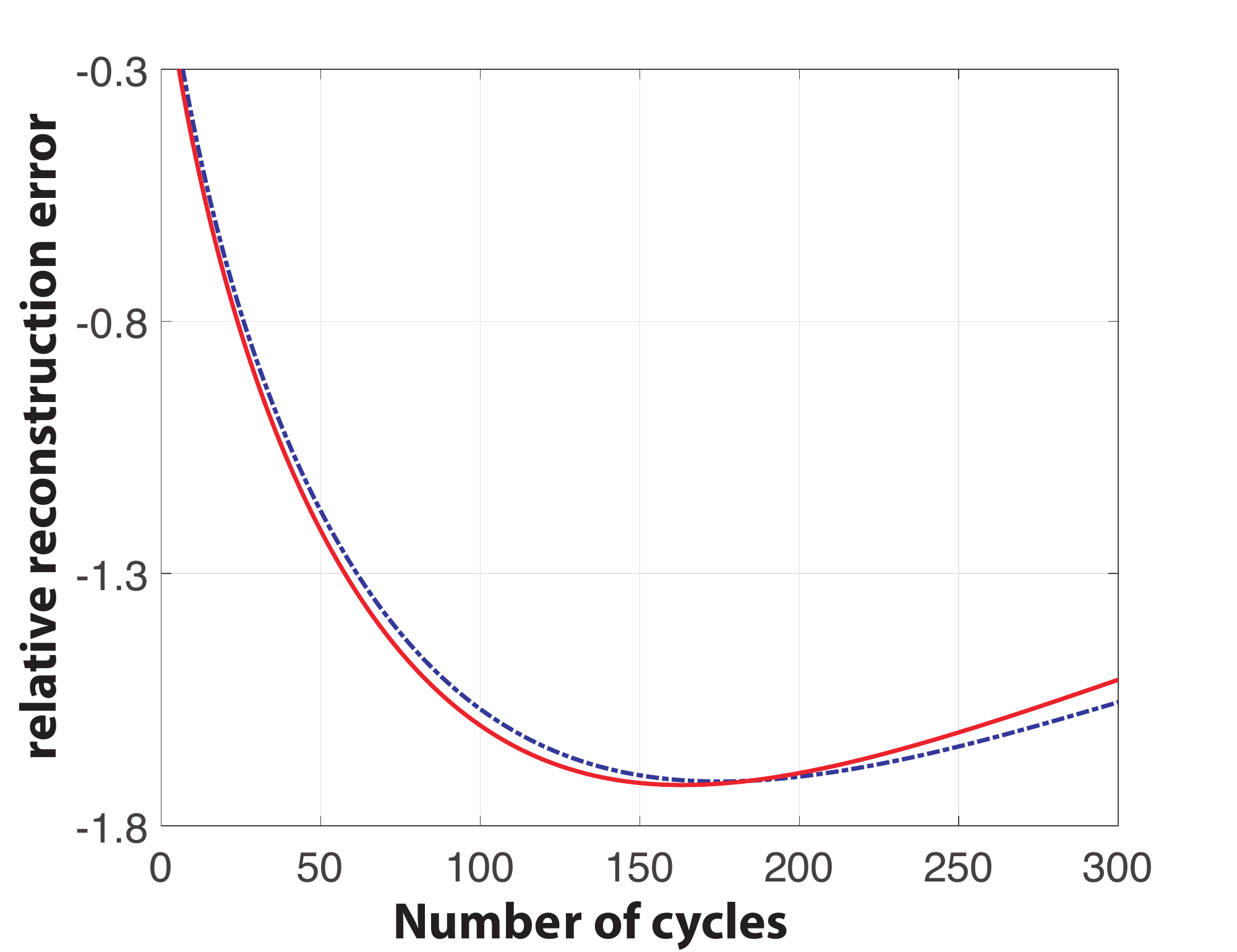}
\caption{\textbf{Relative reconstruction error for noisy data.}\label{err:noise}
Left: Reconstructed brain density. Right:  Reconstructed  bone density.
The Landweber method is shown in blue and the BCD method in red. We observed the typical
semi-convergence behaviour and therefore stopped the iterations at 116 cycles.}
\end{figure}

\section{Conclusion}
\label{sec:conclusion}

In this paper we analyzed the BCD (block coordinate descent)
method  for linear inverse problems.
For a particular tensor product form we have shown that
the  BCD  method combined with an appropriate loping and
stopping  strategy is a convergent regularization method for ill-posed
inverse problems.   The analysis in the present paper applies to operators  having the
 tensor product  form  $\Vo \otimes \Ko (x) =
 \Vo  ( \Ko (x[1]), \dots,  {\rot \Ko (x[\nb]) } )$,  where $\Vo  \in \R^{\nd \times \nb}$
 and $\Ko \colon \X \to \Y$ is linear. We presented two examples for numerically solving ill-posed problems with the BCD method.
The first one is concerns a system of linear integral equations that is covered by our theory. As an outlook
we applied the BCD method to an example not covered by our theory, namely one-step inversion in multi-spectral
X-ray computed tomography.

Future work will be done to  extend  our analysis of the BCD method to more
general forward {\rot operators, in particular non-linear problems including examples like
multi-spectral CT.} This is challenging as
the BCD  is not monotone in the reconstruction error
$\norm{x_k - x^*}$. However, we believe that  the technique introduced in this
paper of finding a  suitable norm where monotonicity  holds can  be extended
to more general situations.

  \section*{Acknowledgments}
The work Markus Haltmeier has been supported by the Austrian Science Fund (FWF),
project P 30747-N32. Simon Rabanser  acknowledges support of   the Austrian Academy of Sciences (\"OAW)
via the DOC Fellowship Programme. The authors thank the anonymous reviewers for
valuable comments that helped to significantly improve the manuscript.


\end{document}